\documentclass[oneside,reqno,11pt]{amsart}
\usepackage{soul}
\usepackage{epsfig}
\usepackage{color}
\usepackage{amssymb,amsmath,amsthm,amstext,amsfonts,amscd,latexsym}
\usepackage[normalem]{ulem}
\usepackage[colorlinks=true,linkcolor=blue,urlcolor=red,citecolor=blue,backref]{hyperref}
\usepackage[active]{srcltx}
\usepackage{graphicx}
\usepackage{float}
\usepackage{tabularx,array,booktabs}
\usepackage[all,arc]{xy}
\usepackage{verbatim}
\usepackage{enumerate}
\usepackage{mathrsfs}

\newif\ifnotasligadas   \notasligadastrue
\newif\ifnotasnamargem  \notasnamargemfalse

\newtheorem{theorem}{Theorem}[section]
\newtheorem{corollary}[theorem]{Corollary}
\newtheorem{proposition}[theorem]{Proposition}
\newtheorem{lemma}[theorem]{Lemma}

\theoremstyle{definition}
\newtheorem{maintheorem}{Theorem}

\newtheorem{definition}[theorem]{Definition}

\newtheorem{notation}[theorem]{Notation}

\newtheorem{claim}[theorem]{Claim}

\newtheorem{remark}[theorem]{Remark}

\newcommand{\topp}{\operatorname{top}}

\newcommand{\vep}{\varepsilon}

\newcommand{\Z}{\mathbb{Z}}
\newcommand{\N}{\mathbb{N}}
\newcommand{\R}{\mathbb{R}}

\newcommand{\diam}{\operatorname{diam}}

\newcommand{\Sing}{\operatorname{Sing}}

\newcommand{\Leb}{\operatorname{Leb}}

\newcommand{\cH}{\mathcal{H}}

\newcommand{\cL}{\mathcal{L}}

\newcommand{\cM}{\mathcal{M}}
\newcommand{\cB}{\mathcal{B}}

\newcommand{\cT}{\mathcal{T}}
\newcommand{\cW}{\mathcal{W}}
\newcommand{\cU}{\mathcal{U}}

\newcommand{\cA}{\mathcal{A}}
\newcommand{\cR}{\mathcal{R}}

\newcommand{\Msr}[2]{\mathcal{M}_{#1}(#2)}          
\newcommand{\Mer}[2]{\mathcal{M}^{e}_{#1}(#2)}      
\newcommand{\Hor}{\mathfrak{H}}                     
\newcommand{\Mspec}{\mathfrak{M}}                   
\newcommand{\Lspec}{\mathfrak{L}}                   
\newcommand{\dimu}{\dim^u_H}
\newcommand{\dims}{\dim^s_H}

\makeatletter
\let\c@equation\c@theorem
\makeatother
\numberwithin{equation}{section}

\renewcommand{\ge}{\geqslant}
\renewcommand{\geq}{\geqslant}
\renewcommand{\le}{\leqslant}
\renewcommand{\leq}{\leqslant}

\title[Hausdorff Dimension and Flexibility for Attractors]{Hausdorff Dimension and Flexibility for Attractors \\ of Non-Uniformly Hyperbolic Flows}

\author{Elias Rego}
\address[E. Rego]{School of Mathematical Sciences, Dalian University of Technology, Dalian 116024, P.R. China}
\email{rego@dlut.edu.cn}

\author{Paulo Varandas}
\address[P. Varandas]{Department of Mathematics, University of Aveiro, 3810–193 Aveiro, Portugal.}
\email{paulo.varandas@ua.pt}

\keywords{Non-uniform hyperbolicity, Hausdorff Dimension, Lagrange Dynamical Spectrum, Markov Dynamical Spectrum}
\subjclass[2020]{Primary:  37C10, 37C05, 37C45}

\begin{document}

\begin{abstract}

We study the Hausdorff dimension of attracting sets of
three-dimensional flows whose one-dimensional quotient dynamics is
non-uniformly expanding. Our main assumption is that the quotient map
admits a full-branch uniformly expanding Gibbs--Markov induced map.
We first prove that every such attractor contains a hyperbolic
horseshoe of Hausdorff dimension strictly larger than two. As a
consequence, after an arbitrarily small $C^2$ perturbation, the
associated dynamical Lagrange and Markov spectra have robustly
non-empty interior for an open and dense set of $C^1$-observables.
We then establish intermediate dimension properties. Every admissible
intermediate Hausdorff dimension is realized by an ergodic invariant
measure supported on the attractor. Moreover, every intermediate
stable and unstable dimension is realized by a strictly ergodic
hyperbolic subsystem.
The approach extends the dimension theory previously developed for
geometric Lorenz attractors to more general the non-uniformly hyperbolic attractors including
singular-hyperbolic attractors, Rovella attractors, and broad classes of skew-product
suspension flows modeled by intermittent and Benedicks--Carleson maps.

\end{abstract}
\maketitle

\section{Introduction}

\subsection{Motivation and background}

The invariant sets produced by chaotic dynamics are seldom smooth. Recurrent classes, the sets to which the rich dynamics is confined, are often fractal objects, and the classical notion of dimension records them only crudely, as sets of zero volume. Hausdorff dimension supplies the missing gradation. It is a real number that measures, roughly speaking, how fast the number of balls needed
to cover the set grows as the scale shrinks. On a submanifold it returns the usual dimension, but it attains non-integer values on more complicated sets, such as Cantor sets. It is invariant under bi-Lipschitz changes of coordinates, which is precisely what makes it a legitimate invariant of differentiable dynamical systems. We refer the reader to \cite{Falconer, Pe97} for a detailed exposition of the general theory of fractal geometry.

It is not immediately obvious that a fractal invariant should be relevant to
dynamics, but it turns out that Hausdorff dimension is intimately related to two
classical measurements of chaos. Hausdorff dimension quantifies the size and the
geometric complexity of an invariant set, and in the physical literature the
dimension of an attractor has long been one of the standard diagnostics of
chaotic behaviour \cite{ER85}; what makes it a dynamical rather than a merely
descriptive quantity is that it is determined by entropy and Lyapunov exponents.
For a $C^{1+\alpha}$ diffeomorphism of a surface and a hyperbolic ergodic
measure $\mu$, Young \cite{Yo82} proved a formula relating $\dim_H(\mu)$ to its
entropy and Lyapunov exponents. This formula was later extended to higher
dimensions by Ledrappier and Young \cite{LY85} and completed by Barreira, Pesin
and Schmeling \cite{BPS99}. A fractal dimension close to the dimension of the
ambient space signals that the chaotic behaviour is spread over the whole phase
space; a small dimension signals that it is confined to a thin set.

For hyperbolic dynamical systems this picture becomes computable, and more than
descriptive. Bowen \cite{Bo79} and Ruelle \cite{Ru82}, studying repellers of
conformal expanding maps, discovered that the dimension of a conformal invariant
set is the unique root $s$ of the equation $P(-s\log\|Df\|)=0$, where $P$ denotes
topological pressure, thus converting a geometric question into a problem of
thermodynamic formalism. McCluskey and Manning \cite{MM83} carried this to
horseshoes of surface diffeomorphisms, showing that the dimension splits as
$\dim_H=\dim_H^s+\dim_H^{u}$, each summand being a root of a Bowen equation
along the corresponding invariant direction.

That the resulting numbers are not merely descriptive was then made clear by the
theory of homoclinic bifurcations. For a horseshoe $\Lambda$ of a surface
diffeomorphism, hyperbolicity prevails after the first homoclinic tangency if
and only if $\dim_H(\Lambda)<1$, whereas above that threshold persistent
tangencies occur with positive density \cite{PT93, PY94, MY01, MPV01}. 

For flows the corresponding theory is younger and considerably more delicate; as
Barreira observes, for various reasons flows were initially given
much less attention than maps \cite{ BaBook}. The difficulties are
structural. First, the flow direction is neutral on regular orbits and one Lyapunov exponent
always vanishes; the corresponding direction contributes a full unit of
dimension that appears as an additive constant. Second, the natural reduction of
the problem to the Poincar\'e return map is considerably more subtle when the
return time is unbounded, which is exactly the setting of the large class of
singular vector fields. In this case the Poincar\'e map is discontinuous and has
unbounded derivative, and the standard reduction to a one-dimensional model
becomes substantially more challenging.

\subsection{From an example to a mechanism}

Our starting point is the fractal dimension theory of the geometric Lorenz
attractor, one of the most celebrated examples of a chaotic flow that is
not uniformly hyperbolic. It was introduced independently in \cite{ABS,G} as a
model for the strange attractor found by Lorenz in his study of atmospheric
convection. These are three-dimensional flows possessing an attracting set that
contains a hyperbolic singularity accumulated by regular orbits; they are the
prototype of the class of \emph{singular-hyperbolic} attractors which, by
\cite{MPP04}, is exactly the class of robustly transitive attractors of
three-dimensional flows. It was shown in \cite{AAPP} that $C^{1+\alpha}$
singular-hyperbolic attractors have zero volume. 
\color{black}

Estimates from below for the dimension of the Lorenz attractor go back to
\cite{AP87, Ste00}, and numerical experiments give $\dim_H\approx2.06$ for the
classical Lorenz equations, with a small margin of error \cite{Vi04}.
A complete proof for the geometric model was obtained only much later in
\cite{MPR}, where Moreira, Pac\'ifico and Roma\~na showed that a geometric Lorenz
attractor contains \emph{hyperbolic horseshoes} of Hausdorff dimension strictly
greater than $2$. This statement is both stronger and dynamically more useful than a lower bound for $\dim_H(\Lambda)$
itself, as evidenced by their deduction that for a large set of observables the associated Markov and Lagrange dynamical spectra have persistently non-empty interior. This
is, to date, the only class of singular flows for which such a statement is known, leaving a large set of important examples unexplored, including the wider class of singular-hyperbolic attractors.

The proof in \cite{MPR} proceeds by reduction. A geometric Lorenz attractor
is given as the suspension flow of a skew product map 
preserving a contracting foliation, and a one-dimensional quotient map $f:I\to
I$; the heart of the matter is to construct, inside $I$, regular Cantor sets of
Hausdorff dimension arbitrarily close to $1$, which are then lifted to the
section and suspended. That construction is explicit and depends heavily on the
fact that the Lorenz map is \emph{uniformly expanding}, $|f'|>\sqrt2$. This is
precisely where the argument stops. As soon as the one-dimensional model loses
uniform expansion, the construction of \cite{MPR} has no analogue. On the other
hand, several classes of non-uniformly hyperbolic attractors lack this uniform
expansion. One emblematic case is the \emph{contracting Lorenz}, or
\emph{Rovella}, attractor of \cite{Ro93}. It is obtained from the geometric
Lorenz model by changing the eigenvalues at the singularity, and its quotient
map has unbounded derivative at the singularity but no uniform expansion, so
that \cite{MPR} says nothing about it. Since the Rovella attractor is
persistent but not robust, and is frequently regarded as the singular-flow
analogue of the quadratic family, it is arguably the first test case any such
theory should pass. The same obstruction affects every attractor whose quotient
map has critical points.

The purpose of this paper is twofold. We first bridge this fractal theory from
the expanding to the non-expanding world by identifying a more general mechanism. Uniform expansion is replaced by the existence of a full-branch
Gibbs--Markov \emph{induced} map, a structure that is available in essentially
every known class of non-uniformly expanding one-dimensional model which
yields analogous results for a much wider class of attractors. Secondly, we
explore the flexibility of the Hausdorff dimension spectrum for non-hyperbolic
attractors by showing that in many cases one can realize arbitrary intermediate
stable, unstable and metric dimensions. We stress that this second group of
results is new even for the geometric Lorenz attractor itself.

\subsection{Hausdorff dimension of non-uniformly hyperbolic attracting sets}

We now present our main results. Given the technical character of the
statements, we introduce here only the notions that are strictly needed, and
postpone the more technical and precise definitions to Section
\ref{sec:prelim}. Throughout the text, $M$ denotes a compact three-dimensional
manifold, $X$ denotes a $C^{1+\alpha}$ vector field on $M$, and $(X_t)_t$
denotes the flow induced by $X$. The orbit, positive orbit and negative orbit of
a point $x \in M$ are defined, respectively, as the sets
$$
\mathcal{O}(x)=\{X_t(x): t \in \mathbb{R}\},\quad
\mathcal{O}^+(x)=\{X_t(x): t \geq 0\},\quad
\text{and}\quad
\mathcal{O}^-(x)=\{X_t(x): t \leq 0\}.
$$
A compact set $\Lambda \subset M$ is called \emph{$X$-invariant} if
$X_t(\Lambda)=\Lambda$ for every $t \in \mathbb{R}$. An $X$-invariant set
$\Lambda \subset M$ is called an \emph{attracting set} if there exists an open
neighborhood $U$ of $\Lambda$ such that
$$
X_t(\overline{U}) \subset U \quad \text{for every } t>0,
$$
and
$$
\Lambda = \bigcap_{t \geq 0} X_t(U).
$$
In this case the set $U$ is called a \emph{trapping region} for $\Lambda$. We
say that $\Lambda$ is \emph{transitive} if there exists $x \in \Lambda$ such
that $\overline{\mathcal{O}^+(x)}=\Lambda$. A compact invariant set is called an
\emph{attractor} for $X$ if it is a transitive attracting set.

A central ingredient in our approach is a sequence of reductions of the original
dynamics to lower-dimensional maps, available for many classes of flows (see
Section \ref{sec:prelim} for a detailed explanation of this reduction process).
Starting from an attracting set $\Lambda$, in many cases it is possible to
describe its dynamics through a Poincar\'e return map $\mathcal R:S \to S$
defined on a suitable global cross-section $S$. This provides a, possibly
discontinuous, discrete-time model that faithfully encodes the dynamics of
$\Lambda$. Under additional assumptions the return map preserves the stable
foliation and therefore induces a one-dimensional quotient map $f:I\to I$,
obtained by identifying points lying on the same stable leaf. The dynamics of
these three objects are closely related, which allows dynamical information to
be transferred between them.

In the case of the Lorenz attractor the one-dimensional model $f$ is uniformly
expanding, which is exactly the setting supporting the methods of \cite{MPR},
but in more general settings uniform expansion cannot be granted. Nevertheless,
in many cases one can recover strong properties for the so-called induced
dynamics, whose definition we now recall. Let $J\subset I$ be a subinterval and
let $\mathcal P$ be a countable measurable partition of $J$ such that
$$\Leb\left(J\setminus \bigcup_{P\in \mathcal{P}}P\right)=0.$$ We say that $F:J\to J$ is an
induced map of $f$ if there exists a return time function $R:J\to\mathbb N$ which is constant at each element of the partition and satisfies
that
\begin{equation}
    \label{eq:definduc}
F(y)=f^{R(y)}(y), \quad \text{for every $y\in J$}.
\end{equation}

The countable measurable partition ${\mathcal P}$ of $J$ is said to be
\emph{Markov} for $F$ if, for every $P\in{\mathcal P}$, the restriction
$$
F|_P:P\to F(P)
$$
is a measurable bijection. The map $F$ is called a \emph{Gibbs--Markov map} if
there exist a metric $d$ on $J$, constants $C>0$ and $\theta\in(0,1)$, and a
Markov partition ${\mathcal P}$ such that the following hold:
\begin{enumerate}
    \item there exists $c_0>0$ such that
$\Leb(F(P))\geq c_0$
for every  $P\in{\mathcal P}$,
\item $d(Fx,Fy)\geq \theta^{-1}d(x,y)$ for every $x,y\in P$, $P\in{\mathcal P}$,
\item
if $g_P$ denotes the inverse Jacobian of $F|_P$, then
$|\log g_P(x)-\log g_P(y)|
\leq
C\, d(Fx,Fy)$
for every $x,y\in P$.
\end{enumerate}
If, in addition,
\begin{enumerate}
    \item[(4)] $F(P)=J$ for every $P\in \mathcal P$,
\end{enumerate}
then we say that $F$ is a \emph{full-branch Gibbs--Markov map}.

\medskip
Observe that once an induced map $F$ has been obtained, one can repeat the
process and consider a map induced by $F$, and this procedure may be iterated
finitely many times. In non-uniformly expanding dynamics it often happens that
Gibbs--Markov maps arise as induced maps, and that they can be made expanding
(see e.g. \cite{Alvesbook,You98} and
references therein).

We are now in a position to define the main object of our study.
\begin{definition}\label{def:modeled}
A  partially hyperbolic  attracting set $\Lambda\subset M$ 
admitting a $C^1$-stable foliation
is said to be \emph{modeled by a
full-branch expanding Gibbs--Markov map} if it admits a one-dimensional model
map which admits a full-branch, expanding, 
Gibbs--Markov transformation.
\end{definition}

Our first result concerns the existence of horseshoes with large dimension for
the class of attractors just described.

\begin{maintheorem}\label{thm:dimension-flows}
\emph{
Let $M$ be a three-dimensional compact manifold and let $X\in
\mathfrak{X}^{1+\alpha}(M)$ have an attracting set  $\Lambda\subset M$ modeled by a full-branch expanding Gibbs--Markov map. Then, there
exists a hyperbolic horseshoe $K\subset \Lambda$ such that
$
\dim_H(K) > 2.
$
}
\end{maintheorem}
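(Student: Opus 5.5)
The plan follows the three-level reduction flow $\to$ section $\to$ quotient used in \cite{MPR}. The uniform expansion of the Lorenz map is replaced by the full-branch expanding Gibbs--Markov induced map $F:J\to J$ of the one-dimensional model $f$.

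\textbf{Step 1: regular Cantor sets in $J$ of dimension close to $1$.} Let $\mathcal P=\{P_i\}$ be the countable full-branch partition of $F$. For each $N$, keep only the branches $P_1,\dots,P_N$, shrunk slightly to closed subintervals $\overline{P_i'}$ with disjoint closures in the interior of $J$. The maximal invariant set
$$
C_N=\bigcap_{n\ge0}F^{-n}\Big(\bigcup_{i\le N}\overline{P_i'}\Big)
$$
is a $C^{1+\alpha}$ regular (dynamically defined) Cantor set. This holds because of the following facts:
\begin{itemize}
\item each branch is a finite composition of iterates of $f$, hence $C^{1+\alpha}$ away from the discontinuities;
\item each branch is uniformly expanding by property (2), after possibly passing to an iterate;
\item the bounded distortion property (3) holds.
\end{itemize}
By the Bowen formula, $\dim_H C_N$ is the zero $s_N$ of $P_N(-s\log|F'|)$. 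Property (3) and full branches give
$$
\sum_i \Leb(P_i)^{s}\,\asymp\, e^{P(-s\log|F'|)},
$$
with $\sum_i\Leb(P_i)=\Leb(J)$. Hence the pressure of the full system at $s=1$ is $0$ (Lebesgue is the conformal measure). Truncated pressures converge to the full one, so $s_N\to1$, and we can choose $N$ with $\dim_H C_N>1-\eta$ for any $\eta>0$.

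\textbf{Step 2: lift to a horseshoe of the flow.} The induced map $F=f^{R}$ has bounded return time $R\le R_N$ on the chosen branches. It therefore lifts to a return map $\mathcal R^{R}$ on the strips of $S$ formed by the stable leaves over $\overline{P_i'}$. The map $\mathcal R$ preserves the $C^1$ stable foliation and contracts it. So the maximal invariant set $\widetilde K_N\subset S$ of this finite-branch map is a hyperbolic horseshoe: unstable direction projecting to $C_N$, stable direction contracting. Its saturation by the flow, $K_N=\bigcup_{t}X_t(\widetilde K_N)$, is a hyperbolic basic set of $X$. It avoids the singularities, since the chosen branches stay a definite distance from discontinuities, and so the flow times are bounded.

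\textbf{Step 3: dimension count.} Because the stable foliation is $C^1$, the unstable dimension of $\widetilde K_N$ equals $\dim_H C_N$. Adding the flow direction gives
$$
\dim_H K_N=1+\dim^u+\dim^s\ \ge\ 2-\eta+\dim^s_H(\widetilde K_N),
$$
using the McCluskey--Manning splitting for the $C^{1+\alpha}$ surface-type return map. It therefore suffices to show that $\dim^s_H(\widetilde K_N)$ stays bounded below by some $\delta>0$ uniformly in $N$, and then take $\eta<\delta$. The stable dimension is the zero of $t\mapsto P(t\log|D\mathcal R^R|_{E^s}|)$. By the Bowen-type inequality
$$
\dim^s\ge \frac{h_\mu}{|\lambda^s(\mu)|}
$$
for the equilibrium state $\mu$ realizing $\dim^u$, the bound reduces to controlling the ratio of contraction to expansion along the horseshoe.

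\textbf{Main obstacle.} The expected difficulty is precisely this uniform lower bound on the stable dimension. As $N\to\infty$ the selected branches have large return times and, near singularities or critical points, strong contraction, which could drive $\dim^s\to0$ as fast as $\dim^u\to1$. My first attempt is to fix a few branches with bounded return and build a subhorseshoe with a fixed positive entropy-to-contraction ratio. A Moreira--Yoccoz or Palis--Takens style combination is not additive, so I would instead use the Ledrappier--Young formula on the big horseshoe. This needs uniform control of $|\lambda^s|/\lambda^u$ via distortion. Concretely, I would choose $N$ so that the equilibrium state of $-s_N\log|F'|$ has entropy $h$ and exponents $\lambda^u,\lambda^s$, with $h/\lambda^u>1-\eta$ and $h/|\lambda^s|$ bounded below. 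The required inequality $|\lambda^s|\le C\lambda^u$ follows from the $C^{1+\alpha}$ bound on $\log|D\mathcal R|$ on strips bounded away from discontinuities, which is exactly where the Gibbs--Markov distortion is used. Then
$$
\dim\ge 1+(1-\eta)+\frac{1}{C}>2 .
$$
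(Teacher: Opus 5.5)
Your Steps 1 and 2 follow the paper's route. Finite truncations of the full-branch Gibbs--Markov map give Cantor sets of dimension close to $1$; these are lifted along the $C^1$ stable foliation to horseshoes of the return map and then suspended. The gap is in how you close Step 3.

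To get $\dim_H K_N>2$ you need a constant $C$ with $|\lambda^s(\mu_N)|\le C\,\lambda^u(\mu_N)$ that does not depend on $N$, because $\eta$ has to be fixed before $N$ is chosen. The $C^{1+\alpha}$ bound on $\log|D\mathcal R|$ over strips bounded away from the discontinuities only gives a constant $C_N$. As $N\to\infty$ the selected branches accumulate on the discontinuity/critical set and the inducing times $R$ are unbounded, which is exactly the obstacle you had just named. The Gibbs--Markov distortion controls only $\log|F'|$ in the quotient. It says nothing comparing the stable contraction accumulated over $R$ iterates of $\mathcal R$ with $\log|F'|$.

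The uniform inequality can in fact fail under the stated hypotheses. Take a skew-product suspension over a Pomeau--Manneville map $f_b$ with fibre contraction $\le\theta<1$, induced on $[1/2,1]$. The $i$-th branch has $|P_i|\asymp i^{-1-1/b}$ and $R=i$. Along the equilibrium states $\mu_N$ of $-s_N\log|F'|$, the exponent $\lambda^u(\mu_N)$ stays bounded, while $|\lambda^s(\mu_N)|\ge \log(1/\theta)\int R\,d\mu_N\to\infty$ when $b\ge 1$. So $h_{\mu_N}/|\lambda^s(\mu_N)|\to 0$, and these measures give no uniform lower bound on the stable dimension.

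The missing idea needs no exponent estimates: the stable dimension is monotone under inclusion. Your ``first attempt'' was on the right track. No additive combination of horseshoes is needed, because the fixed small horseshoe already sits inside the large one.

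\begin{enumerate}
\item Build the Cantor sets so that they are nested. The maximal invariant sets of $F$ over increasing families of branches $\{P_1,\dots,P_N\}$ are nested, as with the paper's $B_N$; if you shrink branches, make sure the shrinking preserves this.
\item The lifted horseshoes then satisfy $\widetilde K_{N_0}\subset\widetilde K_N$ for $N\ge N_0$, so $\widetilde K_{N_0}\cap W^s_{\mathrm{loc}}(x)\subset \widetilde K_N\cap W^s_{\mathrm{loc}}(x)$.
\item Hence $\dim^s_H(\widetilde K_N)\ge \dim^s_H(\widetilde K_{N_0})=:d_0>0$. Positivity holds because a non-trivial horseshoe has positive entropy, so the root of Bowen's equation for the stable potential is positive.
\item Choose $N\ge N_0$ with $\dim_H C_N>1-d_0$. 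McCluskey--Manning gives $\dim_H \widetilde K_N>1$, and the suspended horseshoe has dimension $>2$.
\end{enumerate}

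This is exactly how the paper concludes, in its lemma on the Poincar\'e map.
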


Since the quotient map of a geometric Lorenz attractor is uniformly expanding with a singularity and can be modeled by a full branch, expanding Gibbs-Markov map,
Theorem~\ref{thm:dimension-flows} contains the corresponding result of
\cite{MPR} as a particular case. Let us indicate where the difficulty lies
(cf. Section~\ref{sec:applications} for more details). 
 Two points therefore require care. First, the Cantor sets of dimension
close to $1$ must be produced inside the induced dynamics and then pushed back
to the original model, which forces one to control the distortion of $f^{R}$ on
elements of the Markov partition and to select branches with a uniform bound on
$R$. Second, when lifting to the cross-section and suspending, the roof function
is unbounded, so the dimension of the resulting invariant set in $M$ has to be
compared with that of its section through an argument that does not presuppose a
bounded return time. Both points are carried out in Section \ref{sec:ThmA}.

\begin{remark}\label{rmk:PalisViana}
The continuity of the Hausdorff dimension of hyperbolic horseshoes under $C^1$ perturbations of surface diffeomorphisms was established by Palis and Viana \cite{PV}: if $(\varphi_t)_{t\in I}$ is a $C^1$ family of surface diffeomorphisms and $(\widehat K_t)_{t\in I}$ is the corresponding continuation of a hyperbolic horseshoe, then the map
$
t \mapsto \dim_H(\widehat K_t)
$
is continuous. In particular, using the continuity of the Hausdorff dimension of horseshoes for Poincar\'e maps, as a consequence of Theorem~\ref{thm:dimension-flows} one concludes the robust existence of horseshoes with Hausdorff dimension strictly larger than $2$.
\end{remark}

\subsection{Lagrange and Markov dynamical spectra}

Following the program of \cite{MPR}, we can also extend their results to the
setting of Lagrange and Markov dynamical spectra. Let us briefly recall these
concepts. For a given irrational number $\alpha$, the classical \emph{Lagrange
value} of $\alpha$ measures how well $\alpha$ can be approximated by rationals,
and the \emph{Lagrange spectrum} is the set $L$ of finite values so obtained as
$\alpha$ ranges over the irrational numbers. The \emph{Markov spectrum} $M$ is
defined analogously, replacing a $\limsup$ by a supremum. Both sets are closed,
$L\subset M$, both are intricate near their lower ends, and both contain a
half-line. The reason these two sets belong to dynamics rather than to
arithmetic is the continued fraction algorithm: it converts $\ell(\alpha)$ into
the asymptotic maximum of a continuous function along the orbit of $\alpha$
under a hyperbolic map, so that $L$ and $M$ are the sets of asymptotic and of
total maxima of an observable along orbits of a dynamical system. We refer to
\cite{CF89,LMMR20} for the classical theory.

This dynamical reading suggests an immediate generalization. Given a flow
$(X_t)_t$ on a manifold $M$, an invariant set $\Lambda\subset M$ and a
continuous observable $\phi:M\to\R$, the associated \emph{Markov} and
\emph{Lagrange dynamical spectra} are
$$
\Mspec\big((X_t)_t,\Lambda,\phi\big)=\Big\{\sup_{t\in\R}\phi(X_t(x)):x\in\Lambda\Big\},
\qquad
\Lspec\big((X_t)_t,\Lambda,\phi\big)=\Big\{\limsup_{t\to+\infty}\phi(X_t(x)):x\in\Lambda\Big\}.
$$
The basic qualitative question is whether these sets are large --- for instance,
whether they have non-empty interior. A remarkable series of works of Moreira
and Roma\~na, and of Moreira, Pac\'ifico and Roma\~na, has shown that largeness
of the spectra is governed by a single fractal invariant: the Hausdorff
dimension of the invariant set. For horseshoes of surface diffeomorphisms,
non-empty interior holds for typical observables as soon as the horseshoe has
Hausdorff dimension greater than $1$ \cite{MR17}; for geodesic flows and for
generic Anosov flows analogous statements are available \cite{MR23}; and the
finer structure of the spectra has since been studied in \cite{CMM18,LM20}.

\begin{maintheorem}\label{cor:Markov-spectrum}
 \emph{Let $M$ be a three-dimensional closed manifold and let $X\in
\mathfrak{X}^{2}(M)$ be a $C^{2}$ vector field and 
$\Lambda\subset M$ be an attracting set with hyperbolic singularities. Let $U$ be an attracting neighborhood of $\Lambda$ and
assume that the flow on $\Lambda$ is modeled by a full-branch expanding
Gibbs--Markov map. For every $\vep>0$ there are $Y\in
\mathfrak{X}^2(M)$ that is $\vep$-close to $X$ in the $C^2$-topology  and an open
neighborhood  $\mathcal{U}\subset \mathfrak{X}^2(M)$ of $Y$} such that
for every $\widetilde Y\in \mathcal{U}$ there is an open and dense subset
$\mathcal{H}_{\widetilde Y}\subset C^1(U,\R)$ with the property that, for every $\phi\in
\mathcal{H}_{\widetilde Y}$,
$$\operatorname{int}\big(\Lspec(({\widetilde Y}_t)_t,\Lambda_{\widetilde Y},\phi)\big)\neq \emptyset \quad \textrm{ and }\quad  \operatorname{int}\big(\Mspec((\widetilde Y_t)_t,\Lambda_{\widetilde Y},\phi)\big)\neq \emptyset.$$

\end{maintheorem}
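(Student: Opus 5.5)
Theorem~\ref{cor:Markov-spectrum} follows from Theorem~\ref{thm:dimension-flows} combined with the Moreira--Pac\'ifico--Roma\~na criterion for flows. The plan has four steps.

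\emph{Step 1: a robust horseshoe of dimension above $2$.} Since $X\in\mathfrak{X}^2(M)\subset\mathfrak{X}^{1+\alpha}(M)$ and $\Lambda$ is modeled by a full-branch expanding Gibbs--Markov map, Theorem~\ref{thm:dimension-flows} gives a hyperbolic horseshoe $K\subset\Lambda$ with $\dim_H(K)>2$. The construction produces $K$ as the suspension of a horseshoe $\widehat K$ of the Poincar\'e map $\mathcal R$, restricted to finitely many branches with a uniformly bounded return time. Hence $K$ is a genuine basic set of the flow, lies away from the singularities, and has $\dim_H(K)=1+\dim_H(\widehat K)$. In particular $\dim_H(\widehat K)>1$.

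\emph{Step 2: the perturbation.} The criterion of \cite{MR17,MR23} requires the horseshoe to be non-degenerate: roughly, the stable and unstable Cantor sets should satisfy a non-resonance/twist condition (e.g.\ the quotient of the expansion rates is irrational, or the curvature condition of Moreira--Yoccoz). Such a condition fails only on a meagre set. The first move is to perform a $C^2$-small perturbation $Y$ of $X$, supported in a small tubular neighborhood of a periodic orbit of $K$ away from $\Sing(X)$, that achieves this condition while keeping the continuation $K_Y$ hyperbolic. By Remark~\ref{rmk:PalisViana}, applied to the Poincar\'e maps, $\dim_H(\widehat K_Y)>1$ still holds. The open neighborhood $\mathcal U$ of $Y$ is then taken so that, for every $\widetilde Y\in\mathcal U$:
\begin{itemize}
\item the continuation $K_{\widetilde Y}$ exists;
\item $K_{\widetilde Y}$ is contained in the attracting set $\Lambda_{\widetilde Y}=\bigcap_{t\ge0}\widetilde Y_t(U)$, which holds because $U$ remains a trapping region;
\item $\dim_H(K_{\widetilde Y})>2$;
\item $K_{\widetilde Y}$ satisfies the open non-degeneracy condition.
\end{itemize}

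\emph{Step 3: applying the criterion.} For each $\widetilde Y\in\mathcal U$, the Moreira--Pac\'ifico--Roma\~na theorem for horseshoes of $C^2$ flows with $\dim_H>2$ (equivalently, via the section, \cite{MR17} for $\widehat K_{\widetilde Y}$ with $\dim_H>1$) yields an open and dense set $\mathcal H^0\subset C^1(U,\R)$ of observables. For $\phi\in\mathcal H^0$, both $\Lspec((\widetilde Y_t)_t,K_{\widetilde Y},\phi)$ and $\Mspec((\widetilde Y_t)_t,K_{\widetilde Y},\phi)$ have non-empty interior. Openness and density are in the $C^1$ topology on the neighborhood of $K_{\widetilde Y}$, and transfer to $C^1(U,\R)$ because restriction is continuous and open. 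Set $\mathcal H_{\widetilde Y}=\mathcal H^0$.

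\emph{Step 4: monotonicity.} Since $K_{\widetilde Y}\subset\Lambda_{\widetilde Y}$, the definitions give
\[
\Lspec\big((\widetilde Y_t)_t,K_{\widetilde Y},\phi\big)\subset\Lspec\big((\widetilde Y_t)_t,\Lambda_{\widetilde Y},\phi\big),
\]
and the same inclusion holds for $\Mspec$. This gives the conclusion.

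\emph{Main obstacle.} I expect the difficulty to lie in Step 2: verifying that the hypotheses of the MPR/MR flow criterion hold robustly for the continuation of $K$. That criterion was formulated for the section of a geometric Lorenz flow or for Anosov flows. Here one must check that the local product structure and the $C^{1+\alpha}$ regularity of the stable and unstable Cantor sets of $\widehat K$ persist, even though $\mathcal R$ is discontinuous and has unbounded derivative near $\Sing(X)$. The plan is to use the bounded return time on the chosen branches: it makes $\mathcal R$ a $C^2$ diffeomorphism on a neighborhood of $\widehat K$, so the surface theory applies verbatim.
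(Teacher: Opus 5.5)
\textbf{Verdict: genuine gap in Step 2.} Your overall plan matches the paper's: use Theorem~\ref{thm:dimension-flows}, then the Moreira--Roma\~na result on the section, then monotonicity via $K_{\widetilde Y}\subset\Lambda_{\widetilde Y}$. Your observation that the bounded return time makes $\mathcal R$ a $C^2$ diffeomorphism near $\widehat K$ is also the paper's Remark~\ref{rem:local-MR}.

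Theorem~\ref{thm:MR17} is not a criterion with a checkable ``non-degeneracy'' hypothesis. It is an after-perturbation statement about surface diffeomorphisms: arbitrarily $C^2$-close to $\mathcal R|_V$ there is \emph{some} $\varphi_0$ with an open neighbourhood $\mathcal W$ of good maps. To use it for flows, you must produce a $C^2$-small $Y$ whose return map near $\widehat K$ lies in $\mathcal W$. Knowing that the bad set is meagre among diffeomorphisms does not give this. The return maps $\mathcal R_Y|_{V'}$ of nearby vector fields could a priori miss $\mathcal W$. What is needed is that $Y\mapsto\mathcal R_Y|_{V'}$ is locally onto a $C^2$-neighbourhood of $\mathcal R|_{V'}$, and this is the missing idea.

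The paper supplies it as Proposition~\ref{prop:realization}:
\begin{itemize}
\item write $\mathcal R^{-1}\circ\varphi_0=\exp(v)$ with $v$ $C^2$-small;
\item in a flow box $V'\times[0,\delta]$, with $\delta$ below the minimal return time, build the isotopy $\Psi_t=\exp(\chi(t)\xi v)$;
\item replace $\partial_t$ by $(w_t,1)$, where $w_t$ generates $\Psi_t$;
\item check that $\|Y-X\|_{C^2}\lesssim\delta^{-1}\|\varphi_0-\mathcal R|_V\|_{C^2}$, and that orbits cannot re-enter the box before crossing $S$, so $\mathcal R_Y=\varphi_0$ on $W$.
\end{itemize}
Continuity of $\widetilde Y\mapsto\mathcal R_{\widetilde Y}|_{V'}$ then gives $\mathcal U$. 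Your substitute, a perturbation supported near one periodic orbit of $K$ that ``achieves the condition'', has no supporting argument. The genericity behind Theorem~\ref{thm:MR17} concerns the geometry of the whole stable and unstable Cantor sets. Nothing shows it can be arranged by perturbing near a single periodic orbit.

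\textbf{Smaller omission in Step 3.} The phrase ``equivalently, via the section'' hides the transfer of observables. The open dense set from Theorem~\ref{thm:MR17} consists of $C^1$ functions on the section. The flow spectra of $\phi$ on $K_{\widetilde Y}$ equal the section spectra of $\widehat\phi(x)=\max_{0\le t\le\tau(x)}\phi(\widetilde Y_t(x))$. You therefore need that, for an open dense set of $\phi$, this $\widehat\phi$ is $C^1$ near $\widehat K_{\widetilde Y}$. You also need that $\phi\mapsto\widehat\phi$ is a submersion, so preimages of open dense sets are open dense in $C^1(U,\R)$. This is Lemma~\ref{lem:induced-observable}, following \cite{MR23}; restricting $\phi$ to a neighbourhood of $K_{\widetilde Y}$ does not address it.

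If instead you meant to cite a ready-made flow version of the criterion that itself produces $Y$ and $\mathcal U$, then Steps 2--3 collapse into that citation. Its proof is exactly the realization and observable-transfer arguments above.
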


\medskip

\subsection{Flexibility of Hausdorff dimension}

An interesting question that has attracted significant attention within the
dynamical systems community is the following: given a dynamical system and a
dynamical quantity, under what conditions does this quantity achieve a
continuous spectrum across its subsystems? A classical benchmark in this
direction is the so-called intermediate entropy property investigated by Katok
in \cite{Ka80}. A dynamical system is said to possess the intermediate entropy
property if every value between $0$ and its topological entropy is realized as
the metric entropy of one of its ergodic measures. While the entropy spectrum of
certain systems is highly rigid (e.g. uniquely ergodic systems with positive
entropy; see \cite{G72, BCR07}), many others display full flexibility (e.g.
$C^2$ surface diffeomorphisms \cite{Ka80}). Besides metric entropy, several
other dynamical quantities, such as topological entropy, topological pressure,
Lyapunov exponents and Hausdorff dimension, have been investigated in order to
establish intermediate behaviour in various contexts 
(see e.g.
\cite{AOR25, DGRZ25, DHT22, HXZ10, LSWW20, S25, TWW19}).

A natural question is whether a non-uniformly hyperbolic structure provides a
sufficiently rich geometric setting for the Hausdorff dimension of invariant
sets to satisfy an intermediate dimension property. Let us be more precise.
Denote
$$
\Hor(\Lambda)=\Big\{K\subset \Lambda \;\colon\;  K \textrm{ is a horseshoe}\Big\} \quad \text{and} \quad
\gamma=\sup\Big\{\dim_H(\mu) \;\colon\; K\in \Hor(\Lambda), \;\mu\in \Mer{X}{K} \Big\}.
$$
Here the situation is considerably more complicated than its entropy
counterpart. Indeed, Hausdorff dimension is a fragile and irregular quantity,
requiring strong geometric structure in order to be reasonably well behaved.
A word of caution is in order concerning the upper endpoint $\gamma$.
McCluskey and Manning \cite[Theorem~2]{MM83} proved that typical horseshoes in
dimension $2$ have Hausdorff dimension strictly larger than the supremum of the
dimensions of their invariant ergodic measures. Consequently, even though in
many non-uniformly hyperbolic attractors one can approximate the entropy and the
exponents of the attractor by those of hyperbolic horseshoes, it may still
happen that $\gamma\neq \dim_H(\Lambda)$. Fortunately, one can nevertheless
achieve every intermediate metric dimension in the interval $[1,\gamma]$.

\begin{maintheorem}\label{thm:intermediate-metric-dimensions}
\emph{ 
   Let $M$ be a three-dimensional compact manifold and let $X\in \mathfrak{X}^{1+\alpha}(M)$ be a $C^{1+\alpha}$ vector field with an attractor  $\Lambda\subset M$. If
$\Lambda$ is modeled by a full-branch expanding Gibbs--Markov map, then for every $c\in [1,\gamma)$ there is an ergodic measure $\mu_c\in \Mer{X}{\Lambda}$ such that $\dim_H(\mu_c)=c.$
}
 \end{maintheorem}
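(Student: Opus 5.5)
Fix $c\in[1,\gamma)$. By definition of $\gamma$ there are a horseshoe $K\in\Hor(\Lambda)$ and an ergodic $\nu\in\Mer{X}{K}$ with $\dim_H(\nu)>c$. The plan is to join, inside the single horseshoe $K$, a measure of dimension exactly $1$ to $\nu$ by a continuous path of ergodic measures, and then apply the intermediate value theorem to $\dim_H$ along that path.

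\emph{Reduction to the section.} Take a cross-section $\Sigma$ of $K$ with Poincar\'e map $\mathcal R$. Then $K\cap\Sigma$ is a horseshoe for a $C^{1+\alpha}$ surface diffeomorphism, with bounded return time $\tau$, because $K$ is compact, hyperbolic and avoids the singularities. Flow-invariant ergodic measures on $K$ correspond bijectively to $\mathcal R$-invariant ergodic measures $m$ on $K\cap\Sigma$, via $\mu=(m\times\mathrm{Leb})/\int\tau\,dm$ on the suspension. The flow direction contributes one full unit of dimension, so $\dim_H(\mu)=1+\dim_H(m)$. For $m$ one has Young's formula
$$\dim_H(m)=h_m(\mathcal R)\Big(\tfrac{1}{\lambda^u(m)}+\tfrac{1}{|\lambda^s(m)|}\Big),$$
with $\mathcal R$-exponents; equivalently, $\dim_H(\mu)=1+h_\mu(X_1)\big(1/\chi^u(\mu)+1/|\chi^s(\mu)|\big)$ in terms of flow quantities. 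A periodic orbit gives zero entropy, so its flow measure has dimension exactly $1$. This settles the endpoint $c=1$.

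\emph{Continuous path.} Conjugate $\mathcal R|_{K\cap\Sigma}$ to a mixing subshift of finite type, passing to a transitive component if needed. Let $\varphi^u=\log\|D\mathcal R|_{E^u}\|$ and $\varphi^s=\log\|D\mathcal R|_{E^s}\|$; these are Hölder potentials. Approximate $\nu$ by the equilibrium state $m_1$ of a Hölder potential $\psi_1$ chosen so that $\dim_H(m_1)>c-1$. This is possible because equilibrium states of Hölder potentials are weak$^*$-dense among ergodic measures, with entropy and exponents converging. Katok-type horseshoe approximation inside $K$ gives the same conclusion. Let $\psi_0=\varphi^u\circ$ (one periodic-orbit-favouring potential), or more simply use the family $\psi_t=t\psi_1$ for $t\in[0,1]$, a potential path between $0$ and $\psi_1$. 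Let $m_t$ be the unique equilibrium state of $\psi_t$ along a family of the form $t\psi_1+(1-t)\beta\,g$, where $g$ is a Hölder potential maximised exactly on a fixed periodic orbit and $\beta\to\infty$.

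Equilibrium states of Hölder potentials on mixing SFTs are ergodic and unique, and they depend continuously in weak$^*$ on the potential. Their entropy $h_{m_t}$ is continuous too, because pressure is analytic and $h=P-\int\psi\,dm$. The exponents $\int\varphi^{u,s}\,dm_t$ and $\int\tau\,dm_t$ are continuous because the integrands are continuous. Hence $t\mapsto\dim_H(\mu_t)$ is continuous. As $\beta\to\infty$ the zero-temperature limit concentrates on the periodic orbit, so the dimension at $t=0$ can be made less than $c$. Alternatively, avoid zero-temperature limits by using the measure of maximal entropy of a sub-horseshoe shrinking to a periodic orbit. The intermediate value theorem then gives $\mu_c$.

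\emph{Main obstacle.} The main obstacle is continuity of $\dim_H$ along the path. Young's formula holds for each $m_t$, but the formula must be justified for the flow measure via the $+1$ decomposition, and the endpoint near dimension $1$ requires care. My first attempt would be a cleaner path: on the full shift underlying a sub-horseshoe, take the Bernoulli measures $p(t)=(1-t)\delta_{\text{one symbol}}+t\cdot p_1$. These are all ergodic, and entropy and exponents are explicit continuous functions of $t$. The value at $t=0$ is the periodic measure, with dimension $1$, and the value at $t=1$ can be taken above $c$ by approximating $\nu$ with Bernoulli measures on a high-order full-shift sub-horseshoe.
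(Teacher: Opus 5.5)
Your proposal is correct and shares the paper's outer frame. You pass to the horseshoe $\widehat K=K\cap S$ of the Poincar\'e map, use Young's formula, dispose of $c=1$ with a periodic orbit, and lift through $\Xi$ using $\dim_H(\Xi(\widehat\mu))=\dim_H(\widehat\mu)+1$.

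Where you differ is the key step: showing that $\{\dim_H\widehat\mu:\widehat\mu\in\Mer{\mathcal R}{\widehat K}\}$ is an interval reaching from $0$ to beyond $c-1$. The paper gets this in one stroke from \cite[Lemma~7.1(c)]{DHT22}. That lemma says that, for a transitive topologically Anosov system, any continuous function of $\big(\int\psi^u,\int\psi^s,h\big)$ takes an interval of values on ergodic measures. You instead build an explicit continuous path of ergodic measures and apply the intermediate value theorem, in one of two ways:
\begin{enumerate}
\item equilibrium states of $t\psi_1+(1-t)\beta g$, using weak$^*$ continuity of $t\mapsto m_t$ and continuity of $h_{m_t}=P(\psi_t)-\int\psi_t\,dm_t$; at $t=0$ you use the zero-temperature limit together with upper semicontinuity of entropy for the expansive system $\widehat K$;
\item Bernoulli measures $p(t)$ on a sub-horseshoe on which some power $\mathcal R^N$ is a full shift, averaged over $N$ iterates to become $\mathcal R$-ergodic.
\end{enumerate}

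The paper's route is shorter and needs no H\"older or symbolic structure beyond transitivity, expansiveness and shadowing. Yours stays within classical thermodynamic formalism and says more about $\mu_c$: in the first version it is the suspension of an equilibrium state of a H\"older potential, hence a Gibbs measure with full support on $K$.

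The price is an approximation step that the cited lemma hides. You must justify that $\nu$ can be approximated by equilibrium states, or by Bernoulli measures on Katok-type sub-horseshoes, with convergence of entropy and exponents. Give a precise reference for this (entropy density of Gibbs or Markov measures on subshifts of finite type, or Katok's horseshoe theorem) rather than asserting it.

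Also drop the intermediate suggestion $\psi_t=t\psi_1$. Its endpoint $t=0$ is the measure of maximal entropy, whose dimension need not lie below $c-1$. Only the $\beta g$ term, or the choice $p(0)=\delta_a$, makes the lower endpoint work.
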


Recently, a program towards a topological version of Katok's intermediate
entropy property was initiated in \cite{AOR25}. This version asks whether every
intermediate value of entropy can be realized as the topological entropy of a
subsystem, i.e. of a compact invariant set. The main difficulty in this approach
lies in the need for fine control over the variational principle applied to the
support of a given ergodic measure realizing the intermediate value. Indeed,
taking the supremum of the entropies along the support may cause the topological
entropy to explode. The situation is even more delicate for flows, since
horseshoes are given as suspensions of subshifts and hence their entropy depends
strongly on the integral of the roof function, by Abramov's formula. To overcome
this difficulty, \cite{AOR25} adopted the strategy of constructing strictly
ergodic subsystems with fine control over both the entropy and the roof
function.

Here the situation is more challenging. As discussed above, there is no
guarantee of the existence of a variational principle for Hausdorff dimension.
Moreover, the Hausdorff dimension of a measure $\mu$ supported on a hyperbolic
horseshoe decomposes as

$$
\dim_H(\mu)=\dim_H^s(\mu)+\dim_H^u(\mu)-1,
$$
where $\dim_H^s(\mu)$ and $\dim_H^u(\mu)$ are the stable and unstable dimensions
of $\mu$, respectively, given by ~\eqref{eq:def-dim-su}. 
 Consider the quantities
$$
\gamma^s=\sup\Big\{\dims(\mu) \;\colon\; K\in \Hor(\Lambda),\; \mu\in \Mer{X}{K} \Big\}$$
and 
$$
\qquad
\gamma^u=\sup\Big\{\dimu(\mu) \;\colon\; K\in \Hor(\Lambda), \;\mu\in \Mer{X}{K} \Big\}.
$$

The strategy of constructing strictly ergodic subsystems is very effective even
in the absence of a variational principle and, by applying coding techniques to
the geometric stable and unstable potentials, we obtain compact invariant sets
with any prescribed intermediate stable and unstable Hausdorff dimensions.

\begin{maintheorem}\label{thm:intermediate-dimensions}
\emph{ 
   Let $M$ be a three-dimensional compact manifold and let $X\in \mathfrak{X}^{1+\alpha}(M)$ be a $C^{1+\alpha}$ vector field with an attractor  $\Lambda\subset M$. Suppose that $(X_t)_t$ on $\Lambda$ is modeled by a suspension flow over a $C^{1+\alpha}$ full-branch expanding Gibbs--Markov map of a compact interval $I\subset\mathbb R$. Then for every $c\in [1,\gamma^s)$ and every $c'\in [1,\gamma^u)$ there are strictly ergodic hyperbolic sets $K_c,K_{c'}\subset \Lambda$ such that
     $$\dims(K_{c})=c \quad \text{and} \quad  \dimu(K_{c'})=c'.$$
     }
    \end{maintheorem}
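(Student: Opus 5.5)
The plan is to build, for each prescribed value, a strictly ergodic subshift inside a horseshoe of $\Lambda$, and then tune one parameter continuously until the stable or unstable dimension of its suspension equals the target. I treat the unstable case; the stable case is symmetric, with the contracting potential along the $C^1$ stable foliation in place of the expanding one.

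\textbf{Step 1: reduction to a horseshoe.} Fix $c'\in[1,\gamma^u)$. By definition of $\gamma^u$ there is a horseshoe $K\in\Hor(\Lambda)$ and $\mu\in\Mer{X}{K}$ with $\dimu(\mu)>c'$. The horseshoe $K$ is the suspension of a subshift of finite type $\Sigma_A$ (via the Poincar\'e map $\mathcal R$ on finitely many rectangles), with a bounded Hölder roof $r$. Since stable leaves are $C^1$ and the quotient is one-dimensional, the unstable dimension of any compact invariant subset $Z\subset K$ is $1+s_u(Z)$. Here $s_u(Z)$ is the root of the Bowen equation
$$P_{\mathcal R|_{Z\cap S}}(-s\log|Df\circ\pi|)=0,$$
and measures satisfy $\dimu(\nu)=1+h_\nu(\mathcal R)/\int\log|Df\circ \pi|\,d\nu$ (McCluskey--Manning/Young). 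For a \emph{uniquely ergodic} compact set $Z$ with measure $\nu$, the pressure equation reduces to $h_\nu-s\lambda_\nu=0$. Therefore $\dimu(Z)=1+h_\nu/\lambda_\nu$, and strict ergodicity removes the lack of a variational principle.

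\textbf{Step 2: a continuous family of strictly ergodic subsystems.} Approximate $\mu$ by a Bernoulli/Markov measure on a full subshift inside $\Sigma_A$ (choosing a power and a collection of admissible words), keeping $h/\lambda>c'-1$. Then construct, in the spirit of \cite{AOR25} (Grillenberger-type / Toeplitz constructions), a family $Z_t$, $t\in[0,1]$, of strictly ergodic subshifts. The family runs from a periodic orbit ($h=0$, so unstable dimension $1$) at $t=0$ to a system with $h_{\nu_1}/\lambda_{\nu_1}>c'-1$ at $t=1$. The concrete plan is to fix two blocks: a block $w_0$ (a periodic word) and a rich set of blocks $\mathcal W$ of equal length. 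Build hierarchical concatenations where, at each level, a proportion $t$ of positions uses freely chosen words from $\mathcal W$ and the rest uses $w_0$, with the choices made to force minimality and unique ergodicity. Entropy then behaves like $t\,h_{\mathcal W}$, and the Lyapunov exponent $\lambda_{\nu_t}$ is an average of Birkhoff sums of the Hölder potential $\log|Df\circ\pi|$ over blocks, up to controlled bounded-distortion errors. Both quantities vary continuously in $t$.

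\textbf{Step 3: intermediate value.} The map $t\mapsto h_{\nu_t}/\lambda_{\nu_t}$ is continuous, equals $0$ at $t=0$, and exceeds $c'-1$ at $t=1$. Hence some $t$ gives exactly $c'-1$. Suspending $Z_t$ under $r$ yields a strictly ergodic hyperbolic set $K_{c'}\subset K\subset\Lambda$. Here strict ergodicity of the flow follows from that of the base, because the roof is continuous and positive. We get $\dimu(K_{c'})=c'$, and $c'=1$ is realized directly by a periodic orbit.

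\textbf{Main obstacle.} Step 2 is the hardest part: producing genuinely continuous dependence on $t$ of both $h$ and $\int\log|Df|$ while preserving strict ergodicity. The Toeplitz-type constructions naturally give densities only at rational or dyadic $t$. If exact continuity fails, I would instead realize a dense set of values and close the gap by a final nested-limit construction. That construction chooses level-$n$ frequencies $t_n\to t^*$ so that the limiting subshift remains strictly ergodic with the ratio converging exactly to $c'-1$.
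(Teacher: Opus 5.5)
\textbf{A gap in Step 2.} Your Steps 1 and 3 match the paper's reduction. You pass to $\widehat K=K\cap S$ and its symbolic model. You observe that for a uniquely ergodic set Bowen's equation collapses to $h_\nu-s\lambda_\nu=0$, so $\dimu=1+h_\nu/\lambda_\nu$. You then suspend under the positive continuous roof, which preserves strict ergodicity.

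Step 2 is where all the difficulty lies, and it is not established. You assert a family $(Z_t)_{t\in[0,1]}$ of strictly ergodic subshifts along which $t\mapsto h_{\nu_t}$ and $t\mapsto\lambda_{\nu_t}$ are continuous. Grillenberger/Toeplitz-type constructions do not supply such a family:
\begin{itemize}
\item the subshift depends on choices made at infinitely many scales;
\item the level-$n$ proportions $\lfloor tL_n\rfloor/L_n$ jump at every scale;
\item unique ergodicity has to be re-established separately for each $t$;
\item entropy behaves at best upper semicontinuously under such limits.
\end{itemize}
So the intermediate value argument in Step 3 has nothing to stand on. Your fallback points the right way but lacks the mechanism that makes the limit exact. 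Realizing a dense set of values with different systems gives no exact value. Prescribing $t_n\to t^*$ presupposes a good target $t^*$, whose existence is again the continuity you lack.

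\textbf{The missing idea.} Drop the parameter family and aim at the ratio adaptively inside a single nested construction. The paper does this by invoking \cite[Theorem 3.2]{AOR25}. Let $\widetilde\psi^{\,u}>0$ be the unstable potential on $\Omega$. Start from $\Omega_0=\Omega$ and an ergodic $\nu_0$ with $h_{\nu_0}/\int\widetilde\psi^{\,u}d\nu_0>c'-1$. The theorem gives subshifts of finite type $\Omega_{n+1}\subset\Omega_n$, with $\nu_n$ the measure of maximal entropy of $\Omega_n$, such that:
\begin{itemize}
\item all invariant measures on $\Omega_n$ lie within $2\kappa_n$ of $\nu_n$;
\item their integrals of $\widetilde\psi^{\,u}$ agree with $\int\widetilde\psi^{\,u}d\nu_n$ up to a factor $1\pm\varrho_{n+1}$;
\item $h_{\topp}(\Omega_{n+1})$ equals $(c'-1)\int\widetilde\psi^{\,u}d\nu_n$ up to a factor $1+O(\varrho_{n+1})$, so the entropy target is recomputed at each stage from the already determined $\nu_n$;
\item the languages of length $m_n$ stabilize.
\end{itemize}

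Then $\Omega_{c'}=\bigcap_n\Omega_n$ is uniquely ergodic, because the sets $\Msr{\cT}{\Omega_n}$ shrink to a point. It is minimal by the language condition. Its unique measure $\nu_{c'}$ has ratio exactly $c'-1$. The ratio is bounded above via $h_{\nu_{c'}}\le h_{\topp}(\Omega_n)$ together with the near-constancy of $\int\widetilde\psi^{\,u}$ on $\Omega_{n-1}$. It is bounded below by upper semicontinuity of entropy along $\nu_n\to\nu_{c'}$.

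No continuity in a parameter and no intermediate value theorem is needed. Replacing your Steps 2--3 by this scheme, or an equivalent one you build yourself, completes the argument. Your Step 1 formula then gives $\dimu(K_{c'})=c'$ for the suspension.
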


Together with Theorem~\ref{thm:dimension-flows}, the two previous results
guarantee the existence of hyperbolic sets and measures whose interplay gives
rise to a rich multifractal structure from the point of view of Hausdorff
dimension. Let us now make a few remarks concerning these statements. First, in
contrast with the analogous results for intermediate entropy, the intervals
covered here are bounded below by $1$ rather than by $0$; this is due to the
fact that the smallest possible dimension of a set containing the orbit of a
regular point of a flow equals $1$. Second, while
Theorem~\ref{thm:intermediate-dimensions} provides compact invariant subsets
with prescribed (un)stable dimensions,
Theorem~\ref{thm:intermediate-metric-dimensions} concerns the dimension of
ergodic measures.

Finally, one may ask for a strictly ergodic version of
Theorem~\ref{thm:intermediate-metric-dimensions}, obtained by prescribing the
stable and the unstable dimensions simultaneously and invoking the
Ledrappier--Young formula. We do not expect this to be possible, and the
reason is structural rather than technical: for a given pair $c,c'$ as in
Theorem~\ref{thm:intermediate-dimensions}, the sets $K_c$ and $K_{c'}$ produced
by our construction need not coincide and, even when they do, the equilibrium
states associated with the stable and with the unstable geometric potentials are
in general distinct, which is incompatible with strict ergodicity. In this sense
Theorem~\ref{thm:intermediate-dimensions} appears to be optimal.

\subsection{Applications}

The hypotheses of Definition~\ref{def:modeled} are verified in
Section~\ref{sec:applications} for several classes. In each case the
verification is modular: one exhibits the cross-section and the quotient map,
and checks that the induced map is a full-branch expanding Gibbs--Markov map.
Consequently all these examples satisfy Theorem~\ref{thm:dimension-flows} and
Theorem~\ref{cor:Markov-spectrum}, and, since in each case the model is
in fact a suspension over such a map, Theorems
\ref{thm:intermediate-metric-dimensions} and
\ref{thm:intermediate-dimensions} as well. Since each class has its own
particularities, in some cases the results can be further improved, and hence
each case is treated separately. The next table lists the classes to which our
results apply.

\medskip
\begin{center}
\begin{tabularx}{\linewidth}{p{5.4cm} p{4.2cm} X}
\hline
\textbf{Class} & \textbf{Previously known?} & \textbf{Here}\\
\hline
Geometric Lorenz attractors & \cite{MPR} & recovered\\
Singular-hyperbolic attracting sets & No & Theorem~\ref{thm:app-sing-hyp}\\
Robustly transitive attractors & No & Corollary~\ref{cor:robust-transitive}\\
Rovella (contracting Lorenz) attractors & No & Theorem~\ref{thm:app-rovella}\\
General non-uniformly hyperbolic skew product suspensions & No & Theorem~\ref{thm:skew-product-suspension}\\
\hline
\end{tabularx}
\end{center}
Among these, some  deserve to be singled out. The Rovella attractor is the persistent, non-robust counterpart of the geometric Lorenz model, often regarded as the analogue of the quadratic family for singular flows. Although structurally similar to the geometric Lorenz attractors —suggesting that the techniques in \cite{MPR} might apply directly — its quotient map fails to be uniformly expanding, preventing their direct application. General skew product suspensions cover attracting sets obtained by suspending a skew-product map with a contracting foliation and a one-dimensional non-uniformly hyperbolic map. This suspension process matches the one used in \cite{MPR} to analyze the geometric Lorenz attractor model; the difference here lies in choosing a one-dimensional map from a broader class that fails to satisfy the uniform expansion property displayed in the case of the Lorenz maps. Consequently, our methods can be extended to various other models, provided the induced maps satisfy the main Gibbs–Markov assumption. To the best of our knowledge, Theorems~\ref{thm:app-rovella} and \ref{thm:skew-product-suspension} provide the first proofs that these systems contain horseshoes with a Hausdorff dimension greater than two, as well as the first results regarding their Lagrange and Markov spectra.

\subsection{Organization of the paper}

Section~\ref{sec:prelim} fixes the setting and collects classical results that will
be used through the text.
Section~\ref{sec:ThmA} proves Theorem~\ref{thm:dimension-flows}.
Section~\ref{sec:markov} proves Theorem~\ref{cor:Markov-spectrum}.
Section~\ref{sec:interdiate} proves
Theorems~\ref{thm:intermediate-metric-dimensions} and
\ref{thm:intermediate-dimensions}. Section~\ref{sec:applications} verifies our results for the classes listed in the table above. In the last section we provide the reader with a table describing the notation used in this text.

\section{Preliminaries}
\label{sec:prelim}

In this section we fix the notation and collect the definitions and the known
results that will be used throughout the paper. Unless stated otherwise, $M$
denotes a three-dimensional compact manifold and $X$ a $C^{1+\alpha}$ vector field on $M$, whose
induced flow is denoted by $(X_t)_{t\in\R}$. We write $\mathfrak{X}^{r}(M)$ for the set
of $C^{r}$ vector fields on $M$ endowed with the $C^{r}$ topology, and
$\Sing(X)$ for the set of singularities of $X$, i.e., $\sigma\in Sing(X)$ if and only if $X(\sigma)=0$. A point which is not a singularity is called \emph{regular}.

Recall that a probability measure $\mu$ is $X$-invariant if $\mu(X_t(A))=\mu(A)$ for every
$t\in\R$ and every Borel set $A$, and that $\mu$ is ergodic if
$\mu(A)\mu(A^{c})=0$ for every invariant Borel set $A$. The sets of $X$-invariant Borel probability measures and ergodic measures are, respectively, denoted by $\Msr{X}{M}$ and $\Mer{X}{M}$. If $\Lambda$
is compact and invariant set, we denote by $\Msr{X}{\Lambda}$ and $\Mer{X}{\Lambda}$ for the sets of invariant and ergodic measures of the flow, when restricted to $\Lambda$.  $\Lambda$ is \emph{uniquely ergodic} if 
 $ \#\Msr{X}{\Lambda}=1$. A compact invariant set $\Lambda$  is \emph{minimal} if it contains no non-trivial, proper, closed invariant subset  and it is
\emph{strictly ergodic} if it is both minimal and uniquely ergodic. 
A compact invariant set $\Lambda$ is \emph{locally maximal} if there is an open
neighbourhood $V\supset\Lambda$ such that $\Lambda=\bigcap_{t\in\R}X_t(V)$.

\subsection{Hyperbolicity, dominated splittings and horseshoes}
\label{ss:hyperbolicity}

A compact invariant set $\Lambda\subset M$ is \emph{hyperbolic} for $X$ if
there exist constants $C>0$, $\lambda>0$, and a continuous $DX_t$-invariant
splitting
$$
T_\Lambda M = E^s \oplus \langle X(x)\rangle \oplus E^u,
$$
such that, for all $x\in\Lambda$ and $t>0$,
$$
\|DX_t(x)v\| \le C e^{-\lambda t} \|v\|, \quad \forall v\in E^s(x)
\qquad\textrm{and}\qquad
\|DX_{-t}(x)v\| \le Ce^{-\lambda t} \|v\|, \quad \forall v\in E^u(x).
$$

We say that $\sigma\in\Sing(X)$ is a \emph{hyperbolic singularity} provided
$\{\sigma\}$ is a hyperbolic set. For any $\sigma\in\Sing(X)$ we define its
stable and unstable sets, respectively, as
$$
W^s(\sigma)=\{x\in M; d(X_t(\sigma),X_t(x))\to 0,\ t\to +\infty\}
$$
and
$$
W^u(\sigma)=\{x\in M; d(X_t(\sigma),X_t(x))\to 0,\ t\to -\infty\}.
$$
When $\sigma$ is hyperbolic, $W^s(\sigma)$ and $W^u(\sigma)$ are $C^1$-immersed
submanifolds. We now isolate the classes of hyperbolic sets on which the results of this
paper are built.
 
 A \emph{basic set} is a compact, invariant, transitive,
locally maximal hyperbolic set. A basic set of a surface diffeomorphism is
\emph{of saddle type} if $\dim E^{s}=\dim E^{u}=1$ on it. A \emph{horseshoe} for the flow $(X_t)_t$ is a compact, invariant, transitive,
locally maximal hyperbolic set which is not a single periodic orbit.

\begin{remark} 
Locally maximal hyperbolic sets persist under small perturbations, together
with their dynamics; this is the content of the following classical result,
which will allow us to follow the objects we construct along a $C^1$-family of
vector fields. For flows, horseshoes are given as the suspension flows of horseshoe diffeomorphism. 
\end{remark}

The set $\Lambda\subset M$ admits a \emph{dominated splitting}
if there are constants $C,\lambda>0$ and a non-trivial $DX_t$-invariant
decomposition $T_\Lambda M=E\oplus F$ such that
$$
\frac{\|DX_t(x)|_{E_x}\|}{m\big(DX_t(x)|_{F_x}\big)}\le C e^{-\lambda t},
\qquad x\in\Lambda,\ t\ge0,
$$
where $m(A)=\|A^{-1}\|^{-1}$ denotes the minimum norm of a linear isomorphism
$A$. We say that $\Lambda$ is \emph{partially hyperbolic} if it admits a
dominated splitting $T_\Lambda M=E^{s}\oplus E^{cu}$ whose first bundle is
uniformly contracted:
\begin{equation}\label{eq:contraction}
\|DX_t|_{E^{s}_x}\|\le Ce^{-\lambda t},\qquad x\in\Lambda,\ t\ge0 .
\end{equation}

\subsection{Cross-sections, Poincar\'e maps and suspension flows}
\label{ss:sections}

The strategy adopted throughout the paper is to reduce the flow to a surface
diffeomorphism, when a contracting direction is available, an then further reduce it to a
one-dimensional map. We describe here the two reductions.

\subsubsection*{Suspension semiflows}

Given a map $G:\Delta\to\Delta$ and a roof function $\tau:\Delta\to\mathbb R^+$,
the \emph{suspension semiflow} $(G_t)_{t\geq0}$ is defined by the vertical
displacement
$$
G_t(y,u)=(y,u+t)
$$
modulo the identification of the suspension space $\Delta^\tau$, defined by
$$
\Delta^\tau
=
\{(y,u)\in \Delta\times\mathbb R:0\leq u\leq \tau(y)\}/\sim
\quad \text{where \quad $(y,\tau(y)) \sim(Gy,0).$}
$$
If $G$ is invertible then $(G_t)_t$ defines a flow, and in this case the fiber
$\Delta\times\{0\}$ is trivially a cross-section. The converse construction is
carried out below.

\subsubsection*{Cross-sections and Poincar\'e maps}

We say that a surface $S\subset M$ is a \emph{local cross-section} for $X$ if,
for every $x\in S$, the transversality condition
$$
T_x S \oplus \langle X(x) \rangle = T_x M, \textrm{ for all } x\in S,
$$
holds. Furthermore, we require that the angle between $T_x S$ and the line
field $\langle X(x)\rangle$ be uniformly bounded away from $0$ on compact
subsets of $S$. Observe that, by definition, every point in a local
cross-section is a regular point of $X$.

We say that a compact invariant set $\Lambda\subset M$ admits a \emph{global
cross-section} if there exist finitely many pairwise disjoint local
cross-sections $S_1,\dots,S_k$ such that the (possibly disconnected) surface
$S=S_1\cup\cdots\cup S_k$ satisfies
$$
S \cap \mathcal{O}^+(x) \neq \varnothing, \quad \forall x \in \Lambda \setminus W^s(\Sing(X)),
$$
and
$$
S \cap \mathcal{O}^-(x) \neq \varnothing, \quad \forall x \in \Lambda \setminus W^u(\Sing(X)).
$$
In the case $\Sing(X)=\emptyset$, these conditions reduce to
$$ S \cap \mathcal{O}^+(x)\neq\emptyset \textrm{ and } S \cap \mathcal{O}^-(x)\neq\emptyset,$$
for every $x\in\Lambda$.

If $\Lambda$ admits a global cross-section, one can define the Poincar\'e map
$\mathcal R$ on $S\cap\Lambda$ as follows. For any
$$
x \in S \cap \bigl(\Lambda \setminus W^s(\Sing(X))\bigr),
$$
let $\tau(x)>0$ be the smallest positive time such that $X_{\tau(x)}(x)\in S$,
and set
$$
\mathcal R(x) = X_{\tau(x)}(x).
$$
The inverse map is defined analogously by considering the backward orbit: for
any
$$
y \in S \cap \bigl(\Lambda \setminus W^u(\Sing(X))\bigr),
$$
let $\tau^-(y)>0$ be the smallest positive time such that $X_{-\tau^-(y)}(y)\in
S$ (that is, the first return to $S$ in negative time), and define
$$
\mathcal R^{-1}(y) = X_{-\tau^-(y)}(y).
$$
Consequently, $\mathcal R$ is a bijection from
$$
S \cap \bigl(\Lambda \setminus (W^s(\Sing(X)) \cup W^u(\Sing(X)))\bigr)
$$
onto itself. It is well known that $\mathcal R$ is continuous on
$S\cap(\Lambda\setminus W^s(\Sing(X)))$, while $\mathcal R^{-1}$ is continuous
on $S\cap(\Lambda\setminus W^u(\Sing(X)))$. The return time, however, is
unbounded near the singularities: $\tau(x)\to+\infty$ as $x$ approaches
$W^s(\Sing(X))$, and $\tau^-(y)\to+\infty$ as $y$ approaches $W^u(\Sing(X))$.

When $\Sing(X)\cap\Lambda$ is a hyperbolic set, one can always construct a
global cross-section and a Poincar\'e map for $\Lambda$, and this construction
extends to any $C^1$-close vector field $Y$ and to the continuation $\Lambda_Y$
of $\Lambda$ given. A more detailed
explanation will be provided in Section~\ref{sec:markov}; for a complete and
precise construction we refer the reader to \cite{AM19}.

The two constructions above are inverse to each other in the following sense:
if an attracting set $\Lambda$ admits a global cross-section $S$ and a
Poincar\'e map $\mathcal R:S\to S$, then the dynamics of $X$ restricted to
$U\setminus(W^s(\Sing(X))\cup W^u(\Sing(X)))$ is the suspension flow of
$\mathcal R$ with roof function given by the first return time to $S$.

\subsubsection*{Hyperbolicity of the Poincar\'e map}

Since the flow is $C^1$ and the cross-sections are transverse to $X$, the map
$\mathcal R$ is a $C^1$-diffeomorphism on a neighbourhood $V$ of $\Lambda$
excluding the stable and unstable sets of the singularities. Moreover,
hyperbolicity is inherited by the section: if $K\subset\Lambda$ is a hyperbolic
set for $X$, then there is a neighbourhood $V$ of $\widehat K=K\cap S$ such
that $\widehat K$ is a hyperbolic set of the diffeomorphism
$\mathcal R:V\to\mathcal R(V)$. Precisely, the hyperbolic splitting of $K$
induces a continuous $D\mathcal R$-invariant splitting
$$
T_{\widehat K} S = E^s \oplus E^u
$$
and constants $C>0$, $\lambda>0$ such that, for all $x\in\widehat K$ and
$n\ge1$,
$$
\|D\mathcal R^n(x)v\| \le C e^{-\lambda n} \|v\|, \quad \forall v\in E^s(x),
\qquad\textrm{and}\qquad
\|D\mathcal R^{-n}(x)v\| \le C e^{-\lambda n} \|v\|, \quad \forall v\in E^u(x).
$$

\begin{notation}\label{not:pushpull-horseshoes}
Throughout, the horseshoes of the Poincar\'e map that we consider are  denoted
$\widehat K$; those of the flow are denoted $K$, so that
$\widehat K=K\cap S$.
\end{notation}

\subsubsection*{The one-dimensional quotient}

Assume now that $\Lambda$ is a partially hyperbolic set for $X$ with invariant
splitting
$$
T_{\Lambda}M = E^s \oplus E^{cu},
$$
where $\dim E^s=1$ and $\dim E^{cu}=2$. Since partial hyperbolicity is an open
condition, this splitting admits a continuous extension to a small tubular
neighborhood $U$ of $\Lambda$.

By the stable manifold theorem \cite{HPS}, the uniformly contracting bundle
$E^s$ integrates into an $X$-invariant strong stable H\"older continuous foliation
$\mathcal F^{ss}$, whose leaves are one-dimensional curves tangent to  $E^s$. Higher regularity requires additional
bunching assumptions. For instance, under the bunching condition
$$
\sup_{x\in\Lambda}
\left\|DX^{t_0}_x|_{E^s}\right\|
\left\|DX^{t_0}_x\right\|^2<1
$$
for some $t_0>0$, the stable foliation is of class $C^2$
(see e.g. \cite{AraButVar,AM17}).
For any $x\in U$, define
$$\mathcal{F}^s(x)=\bigcup_{t\in \R} X_t(\mathcal{F}^{ss}(x)).$$
Restricting $\mathcal{F}^s$ to the two-dimensional global cross-section $S$
constructed above, we obtain a one-dimensional foliation of $S\cap U$.

Suppose that each connected component $U_i$ of $U$ has a product structure
$L\times I_i$, in such a way that $I_i$ is the leaf space of
$\mathcal{F}^s|_{S\cap U_i}$, i.e. the quotient space obtained by identifying
points of $S\cap U$ lying on the same stable leaf, and let $\pi^s$ denote the
corresponding quotient projection (see \cite{AM19} for the precise details). Because the flow $X_t$ preserves the stable
foliation $\mathcal{F}^s$, the Poincar\'e map $\mathcal R$ maps stable leaves
in $S$ to stable leaves in $S$. Hence $\mathcal R$ induces a well-defined
interval map
$$
f: I \to I, \qquad f(\pi^s(x)) = \pi^s(\mathcal R(x)),
$$
where $I=I_1\cup\cdots\cup I_k$.

The projected map $f$ need not be continuous: it may exhibit discontinuities
precisely at the images under $\pi^s$ of the stable manifolds of the
singularities, where the forward orbit fails to return to the section in finite
time. As we will see later, such an induction is available for several classes
of attractors, and the construction of $f$ will be explained in each class as
it appears in the applications. From now on we assume that every attractor
under consideration admits a Poincar\'e map which can be quotiented into a
one-dimensional map $f$.

\subsection{Lyapunov exponents and hyperbolic measures}
\label{ss:lyapunov}

Let $\mu$ be an invariant probability measure for $X$. Oseledets' theorem
\cite{O68} asserts that, for $\mu$-almost every $x\in M$, there exist numbers
$\lambda_1(x) < \lambda_2(x) < \cdots < \lambda_{k(x)}(x)$
and a $DX_t$-invariant splitting of the tangent space
\[
T_x M = \widehat E_1(x) \oplus \widehat E_2(x) \oplus \cdots \oplus \widehat E_{k(x)}(x)
\]
such that, for every $v\in \widehat E_i(x)\setminus\{0\}$,
\[
\lim_{t\to\pm\infty} \frac1t \log \|DX_t(x)v\| = \lambda_i(x).
\]
The numbers $\lambda_i(x)$ are the \emph{Lyapunov exponents} of $\mu$ at $x$,
and the subspaces $\widehat E_i(x)$ are the corresponding \emph{Oseledets
spaces}. The value $k(x)\le\dim M$ is the number of distinct Lyapunov exponents
at $x$, and the functions $x\mapsto\lambda_i(x)$ and
$x\mapsto\dim\widehat E_i(x)$ are measurable and invariant along orbits. Since
the flow direction is always neutral, one of the exponents is zero; its
corresponding Oseledets space contains the flow direction $\langle X(x)\rangle$
and will be denoted by $\widehat E^0(x)$.

We say that an invariant measure $\mu$ is a \emph{hyperbolic measure} if its
only vanishing Lyapunov exponent is the one associated with the flow direction.
For an ergodic measure $\mu$ the exponents and their multiplicities are
constant $\mu$-almost everywhere, and we write them as
\[
\lambda_1 < \lambda_2 < \cdots < \lambda_k,
\]
with corresponding Oseledets spaces $\widehat E_1,\dots,\widehat E_k$.
The same theorem applies to a diffeomorphism $\mathcal R:S\to S$: for an
invariant measure $\mu$ it gives a $D\mathcal R$-invariant splitting
\[
T_x S = \widehat E_1(x) \oplus \cdots \oplus \widehat E_{k(x)}(x)
\]
with Lyapunov exponents
\[
\chi_i(x)=\lim_{n\to\pm\infty} \frac1n \log \|D\mathcal R^n(x)v\| \quad (v\in \widehat E_i(x)\setminus\{0\}).
\]
In this case $\mu$ is a \emph{hyperbolic measure} if all of its Lyapunov
exponents are non-zero; the Oseledets splitting is then the hyperbolic
splitting $E^s\oplus E^u$, with $\widehat E^s$ corresponding to the negative
exponents and $\widehat E^u$ to the positive ones. When $\mathcal R$ is a
Poincar\'e map for an attracting set $\Lambda$, every hyperbolic measure $\mu$
of the flow naturally induces a hyperbolic measure $\nu$ for $\mathcal R$.

\subsection{Symbolic dynamics}
\label{ss:symbolic}

Let $\cA$ be a finite or countable alphabet. We write $\cA^{\N}$ and $\cA^{\Z}$
for the one- and two-sided sequence spaces and $\cT$ for the shift map,
$(\cT x)_i=x_{i+1}$. A \emph{subshift} is a closed $\cT$-invariant set
$\Omega\subset\cA^{\Z}$; it is a \emph{shift of finite type} if $\cA$ is finite and it is defined by
a finite set of forbidden words, and the \emph{full shift} on $\cA$ is
$\Omega=\cA^{\Z}$. For $\omega=(\omega_1,\dots,\omega_m)$ we write $[\omega]$
for the corresponding cylinder, and
$$
\cL_m(\Omega)=\{\omega\in\cA^{m}:[\omega]\cap\Omega\ne\emptyset\}
$$
for the set of admissible words of length $m$, the \emph{language} of $\Omega$
in length $m$. The \emph{separation time} of $x,y\in\Omega$ is the least
$n\ge0$ with $x_n\ne y_n$, and for $\theta\in(0,1)$ the metric
$d_\theta(x,y)=\theta^{\,s(x,y)}$ induces the product topology; a function is
\emph{H\"older} on $\Omega$ if it is Lipschitz with respect to some such
$d_\theta$.

Symbolic models are available for the basic sets of the Poincar\'e map through
Markov partitions. Indeed, let $\widehat K$ be a basic set of a $C^{1+\alpha}$ surface diffeomorphism $g$. Then $\widehat K$ admits a finite Markov partition, and the associated coding
map $\pi:\Omega\to\widehat K$, defined on a shift of finite type $\Omega$, is a
surjective, H\"older, finite-to-one map with $g\circ\pi=\pi\circ\cT$. If the
rectangles of the Markov partition are pairwise disjoint  then $\pi$ is a
homeomorphism, and $g|_{\widehat K}$ is topologically conjugate to
$\cT|_\Omega$.

For $\nu\in\Msr{\cT}{\Omega}$ we write $\pi_*\nu=\nu\circ\pi^{-1}\in
\Msr{g}{\widehat K}$ for the push-forward measure. Conversely, every
$\widehat\mu\in\Msr{g}{\widehat K}$ admits a lift
$\pi^{*}\widehat\mu\in\Msr{\cT}{\Omega}$ with
$\pi_*\pi^{*}\widehat\mu=\widehat\mu$; since $\pi$ is finite-to-one, the lift
preserves entropy,
$h_{\pi^{*}\widehat\mu}(\cT)=h_{\widehat\mu}(g)$, and preserves ergodicity, and
for every continuous $\psi$ one has
$$\int\psi\circ\pi\,d\pi^{*}\widehat\mu=\int\psi\,d\widehat\mu.$$ Finally,
minimality and unique ergodicity are preserved by $\pi$ and by $\pi_*$.
We shall also fix once and for all a metric $\mathfrak d$ on
$\Msr{\cT}{\Omega}$ inducing the weak$^{*}$ topology, so that
$\big(\Msr{\cT}{\Omega},\mathfrak d\big)$ is a compact metric space.

\subsection{Thermodynamic formalism}
\label{ss:thermo}

Let $X_1$ be the time one map of $X$ and $K\subset M$ a compact subset. 
$\psi:K\to\R$ be continuous. Write $S_n\psi=\sum_{j=0}^{n-1}\psi\circ X_{j}$. A
set $E\subset K$ is \emph{$(n,\delta)$-separated} if for all distinct
$x,y\in E$ there is $0\le j<n$ with $d(X_{j}(x),X_{j}(y))>\delta$. The
\emph{topological pressure} of $\psi$ is
$$
P_K(X,\psi)=\lim_{\delta\to0}\ \limsup_{n\to\infty}\frac1n\log\
\sup\Big\{\sum_{x\in E}e^{S_n\psi(x)}\ :\ E\subset K\ \text{is }(n,\delta)
\text{-separated}\Big\},
$$
and the \emph{topological entropy} of $X|_{K}$ is
$h_{\topp}(X|_{K})=P_{K}(X,0)$. We write $P_{\topp}(X,\psi)$ when the ambient
set is clear. 

Assume $K$ is invariant. By the  celebrated variational principle, one has 
$$\displaystyle P_{K}(X,\psi)=\sup\Big\{h_\nu(X)+\int\psi\,d\nu\ :\
\nu\in\Msr{X}{K}\Big\},$$ and the supremum may be taken over
$\Mer{X}{K}$. A measure attaining it is called an \emph{equilibrium state} for
$\psi$; an equilibrium state for $\psi\equiv0$ is a \emph{measure of maximal
entropy}.

\subsection{SRB measures and non-uniform hyperbolicity}
\label{ss:SRB}

\begin{definition}\label{def:SRB}
A measure $\mu\in\Msr{X}{\Lambda}$ is an \emph{SRB measure} if the conditional
measures of $\mu$ along local unstable manifolds are absolutely continuous with
respect to the induced Riemannian volume on unstable leaves; that is, if
$\{\mu^{u}_x\}$ are the conditional measures of a measurable partition
subordinate to local unstable manifolds, then $\mu^{u}_x\ll\Leb^{u}_x$ for
$\mu$-almost every $x\in\Lambda$.
\end{definition}

\begin{remark}\label{rem:SRBGibbs}
For uniformly hyperbolic flows the SRB measures are exactly the equilibrium
states of the \emph{unstable geometric potential}
\begin{equation}\label{eq:psiuX}
\psi^{u}_{X}(x)=-\frac{d}{dt}\Big|_{t=0}\log\big\|DX_t|_{E^{u}_x}\big\| ,
\end{equation}
which satisfies $P_{\topp}\big((X_t)_t|_\Lambda,\psi^{u}_{X}\big)=0$; see
\cite{BR75}.
\end{remark}

\begin{definition}\label{def:NUH}
An attracting set $\Lambda$ is \emph{non-uniformly hyperbolic} if it is
partially hyperbolic and carries an $X$-invariant, ergodic, hyperbolic and fully
supported SRB measure $\mu_\Lambda$. We call $\Lambda$ \emph{non-trivial} if it
is neither a single equilibrium nor a single periodic orbit.
\end{definition}

\begin{remark}
Since $\mu_\Lambda$ is hyperbolic, all of its Lyapunov exponents other than the
one along the flow direction are non-zero, and the Oseledets decomposition
\cite{O68} gives
$$
\limsup_{t\to+\infty}\frac1t\log\|DX_{-t}(x)v\|<0
$$
for $\mu_\Lambda$-almost every $x\in\Lambda$ and every
$v\in E^{cu}_x\setminus\langle X(x)\rangle$.
\end{remark}

\subsection{Hausdorff dimension of sets and of measures}
\label{ss:dimension}

\subsubsection*{Dimension of sets}

Let $(Z,d)$ be a metric space and let $E\subset Z$. For $s\geq0$ and
$\varepsilon>0$, define
\begin{equation}
\mathcal H^s_\varepsilon(E)=\inf\left\{ \sum_{i=1}^{\infty} \diam(V_i)^s: E\subset\bigcup_{i=1}^{\infty}V_i, \quad \diam(V_i)<\varepsilon \right\}.
\end{equation}
The $s$-dimensional Hausdorff measure of $E$ is
$\mathcal H^s(E)=\lim_{\varepsilon\to0}\mathcal H^s_\varepsilon(E)$, and the
\emph{Hausdorff dimension of $E$} is
\begin{equation}
\dim_H(E)=\inf\left\{ s\geq0:\mathcal H^s(E)=0 \right\}=\sup\left\{ s\geq0:\mathcal H^s(E)=+\infty \right\}.
\end{equation}
It is well known that the Hausdorff dimension is preserved by bi-Lipschitz
maps. We refer the reader e.g. to \cite{Falconer,Pe97} for more details.

\subsubsection*{Dimension of measures}

Let $\mu$ be a Borel probability measure on a metric space $Z$. The Hausdorff
dimension of $\mu$ is defined by
\begin{equation}\label{eq:measure-dimension}
\dim_H(\mu)=\inf\left\{ \dim_H(E): E\subset Z \text{ is Borel and } \mu(E)=1 \right\},
\end{equation}
that is, $\dim_H(\mu)$ is the smallest Hausdorff dimension of a set carrying
full $\mu$-measure. A different notion of dimension is given by the local
dimensions. For $x\in Z$, the \emph{lower and upper local dimensions of $\mu$
at $x$} are
\begin{equation*} \underline d_\mu(x)=\liminf_{r\to0} \frac{\log \mu(B(x,r))}{\log r}, \qquad \overline d_\mu(x)=\limsup_{r\to0} \frac{\log \mu(B(x,r))}{\log r}, \end{equation*}
respectively. We say that $\mu$ is \emph{exact dimensional} if there exists
$d\geq0$ such that
\begin{equation*}
d_\mu(x)=\lim_{r\to0} \frac{\log \mu(B(x,r))}{\log r}=d \end{equation*}
for $\mu$-almost every $x$. If $\mu$ is exact dimensional then
$d_\mu(x)=\dim_H(\mu)$ for $\mu$-almost every $x$ (see e.g. \cite{Falconer}).

\subsubsection*{Dimension of hyperbolic measures of a flow}

Let us now focus on the dimension of flow invariant measures. Let $(X_t)_t$ be
a $C^{1+\alpha}$ flow on a three-dimensional manifold, and let $\mu$ be an
ergodic hyperbolic invariant probability measure with Lyapunov exponents
$\lambda^s(\mu)<\lambda^0<\lambda^u(\mu)$, where $\lambda^0=0$ is the exponent
in the flow direction. As the stable and unstable Pesin foliations are
one-dimensional, the stable and unstable conditional measures are exact
dimensional, with dimensions
\begin{equation}\label{eq:transverse-measure-dimensions} \delta^u(\mu)=\frac{h_\mu}{\lambda^u(\mu)}, \qquad \delta^s(\mu)=\frac{h_\mu}{|\lambda^s(\mu)|}, \end{equation}
respectively. In consequence,
\begin{equation}\label{eq:dimension-hyperbolic-measure} \dim_H(\mu)
=1+\delta^s(\mu)+\delta^u(\mu)
= 1+ \frac{h_\mu}{|\lambda^s(\mu)|} + \frac{h_\mu}{\lambda^u(\mu)}, \end{equation}
where $h_\mu$ stands for the metric entropy of $\mu$ with respect to the
time-$1$ map $X_1$, and the term $1$ on the right hand side accounts for the
flow direction (cf. \cite{BPS99,BRW04}).

Accordingly, we set
\begin{equation}\label{eq:def-dim-su}
\dims(\mu)=1+\delta^s(\mu)
\qquad\text{and}\qquad
\dimu(\mu)=1+\delta^u(\mu),
\end{equation}
and likewise $\dims(K)=1+\delta^s(K)$ and $\dimu(K)=1+\delta^u(K)$ for a
horseshoe $K$ of the flow, where $\delta^s(K),\delta^u(K)$ are the transverse
dimensions introduced in Subsection~\ref{ss:transverse}. For a hyperbolic set
$\widehat K$ of the Poincar\'e map we write instead
$\dims(\widehat K)=\delta^s(\widehat K)$ and
$\dimu(\widehat K)=\delta^u(\widehat K)$.

\subsection{Horseshoes and transverse dimensions}
\label{ss:transverse}

Let $K$ be ahyperbolic  horseshoe for the flow $(X_t)_t$. Its tangent bundle admits a continuous
invariant splitting
$T_K M= E^s \oplus \langle X\rangle \oplus E^u,$
where $E^s$ is uniformly contracted and $E^u$ is uniformly expanded. Recall
that a \emph{Markov family} for a hyperbolic flow $(X_t)_t$ on a compact
invariant set $K$ is a finite collection of rectangles
$
\mathcal Q=\{Q_1,\dots,Q_k\}
$
covering $K$ and satisfying the Markov property with respect to the associated
Poincar\'e return map (see e.g. \cite[p. 280]{PS}); it is well known that every
hyperbolic flow admits a Markov family.

\subsubsection*{Transverse dimensions}

Fix a local cross-section $S$ intersecting $K$, let $\mathcal R$ denote the
Poincar\'e first return map, and set
\begin{equation} \widehat K=K\cap S. \end{equation}
By construction, $\widehat K$ is a hyperbolic basic set for the surface local
diffeomorphism $\mathcal R$ on $S$. For $x\in\widehat K$, write
\begin{equation} \widehat K^u(x)=\widehat K\cap W^u_{\mathrm{loc}}(x,\mathcal R), \qquad \widehat K^s(x)=\widehat K\cap W^s_{\mathrm{loc}}(x,\mathcal R). \end{equation}
By \cite{MM83}, the quantities
\begin{equation} \delta^u(K)=\dim_H\bigl(\widehat K^u(x)\bigr) \qquad \text{and} \qquad \delta^s(K)=\dim_H\bigl(\widehat K^s(x)\bigr) \end{equation}
do not depend on the choice of the point $x\in\widehat K$; they are called the
\emph{transverse unstable and stable dimensions} of $K$. Since $K$ is locally
the suspension of the surface horseshoe $\widehat K$, one has
$\dim_H(K)=1+\delta^s(K)+\delta^u(K)$, and the transverse dimensions can be
computed as in the case of surface diffeomorphisms.

Indeed, if $\varphi$ is a $C^{1+\alpha}$ surface diffeomorphism and
$\widehat K$ is a basic hyperbolic set of saddle type, then it follows from
McCluskey and Manning \cite{MM83} that
\begin{equation}\label{eq:MM-dimension-formula} \dim_H(\widehat K)=\delta^s(\widehat K)+\delta^u(\widehat K), \end{equation}
where
\begin{equation} \delta^u(\widehat K)=\dim_H\bigl(\widehat K\cap W^u_{\mathrm{loc}}(x)\bigr) \end{equation}
is independent of $x\in\widehat K$ and is the unique solution of Bowen's
equation $P_{\topp}\bigl(\varphi|_{\widehat K},t\,\psi^u\bigr)=0$, with
$$\psi^u(x)=-\log\left|D\varphi(x)\big|_{E^u_x}\right|;$$ the dimension
$\delta^s(\widehat K)$ is computed likewise with respect to the potential
$$\psi^s(x)=\log\left|D\varphi(x)\big|_{E^s_x}\right|.$$

For our purposes we need a more precise description of the Hausdorff dimension
of a horseshoe, which we take from Pesin and Sadovskaya \cite{PS}. Let
$K\subset\Lambda$ be a hyperbolic horseshoe for the flow $(X_t)_{t\in\mathbb R}$
and fix a Markov family $\mathcal Q=\{Q_1,\dots,Q_k\}$ for the flow on $K$. The
unstable geometric potential $\psi^u_X$ defined in \eqref{eq:psiuX} is
H\"older continuous, and there exists a unique real number $\delta^u(K)$
satisfying Bowen's equation
$$
P_{\topp}
\bigl((X_t)_t|_K,\,
\delta^u(K)\,\psi^u_X
\bigr)=0.
$$
Let $\mu_K$ denote the unique equilibrium state associated to the potential
$\delta^u(K)\psi^u_X$, and denote by
$
(\mu_x^u)_{x\in K}
$
the family of conditional measures of $\mu_K$ with respect to the measurable
partition into local unstable plaques
$
W^u_{\mathrm{loc}}(x)\cap Q_i,
$
with $x\in Q_i$ and $1\le i\le k$. The next theorem ensures that the local
unstable dimension is constant, and hence that it can be computed in a
neighbourhood of any given point of the horseshoe.

\begin{theorem}\cite[Theorem 4.1]{PS}
\label{thm:PS-local-dim}
For every $x\in K$ and every non-empty relatively open subset
$V\subset W_{\mathrm{loc}}^u(x)\cap K$, the following properties hold.

\begin{enumerate}
\item
$
\dim_H(V)
=
\underline{\dim}_B(V)
=
\overline{\dim}_B(V)
=
\delta^u(K).
$
\item
$
d_{\mu_x^u}(y)
=
\delta^u(K)
= \dim_H(\mu_x^u)
$
for $\mu_x^u$-almost every
$y\in W_{\mathrm{loc}}^u(x)\cap K$.
\end{enumerate}
\end{theorem}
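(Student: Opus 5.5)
The plan is to reduce the statement to a one-dimensional conformal problem on an unstable plaque of the Poincaré map and then run the classical Bowen–Ruelle argument, using the Gibbs property of $\mu_K$. Fix the Markov family $\mathcal Q=\{Q_1,\dots,Q_k\}$ and its cross-section $S$, and let $\mathcal R$ be the Poincaré map with roof $\tau$, which is bounded and Hölder on the horseshoe $\widehat K=K\cap S$. Integrating the flow potential along the roof gives the induced potential
$$\Psi^u(y)=\int_0^{\tau(y)}\psi^u_X(X_s y)\,ds ,$$
and $\Psi^u$ is cohomologous to $-\log|D\mathcal R|_{E^u_y}|$. By Abramov's formula, $P_{\topp}((X_t)_t|_K,\delta\psi^u_X)=0$ if and only if $P_{\topp}(\mathcal R|_{\widehat K},\delta\Psi^u)=0$. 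Moreover, $\mu_K$ is the suspension of the equilibrium state $\widehat\mu$ of $\delta^u(K)\Psi^u$, and the conditionals $\mu^u_x$ correspond, up to the bounded flow-box product structure, to the conditionals of $\widehat\mu$ on unstable plaques of $\widehat K$. Since the flow direction and the transverse projections inside a flow box are bi-Lipschitz, it suffices to prove both items for $\widehat K\cap W^u_{\mathrm{loc}}(x,\mathcal R)$ and $\widehat\mu^u_x$.

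On a plaque $W^u_{\mathrm{loc}}(x)\cap Q_i$, the $n$-cylinders of the Markov partition for the backward dynamics, namely $C_n(y)=\mathcal R^{-n}\bigl(W^u_{\mathrm{loc}}(\mathcal R^n y)\cap Q_{j}\bigr)$, are intervals. Because $\mathcal R$ is $C^{1+\alpha}$ and $E^u$ is one-dimensional (conformal), bounded distortion gives
$$\diam C_n(y)\asymp \exp\bigl(S_n\Psi^u(y)\bigr),$$
with constants uniform in $y$ and $n$. The Gibbs property of $\widehat\mu$ for the Hölder potential $\delta\Psi^u$ with zero pressure gives $\widehat\mu(\text{cylinder})\asymp\exp(\delta S_n\Psi^u)$. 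This transfers to the conditionals: local product structure shows that $\widehat\mu^u_x$ is equivalent, with Hölder bounded density, to the unstable component of the Gibbs measure, so $\widehat\mu^u_x(C_n(y))\asymp(\diam C_n(y))^{\delta}$. This transfer is the step I expect to be the main obstacle. I would first try to verify it through the explicit product formula for equilibrium states of hyperbolic sets, namely the Bowen/Leplaideur description $\widehat\mu\simeq\mu^u\times\mu^s$ with Hölder density.

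With this estimate in hand, the two items follow.

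\emph{Lower bound for $\dim_H(V)$ and for $\underline d$.} Any relatively open $V$ contains some cylinder $C_n$, and $\widehat\mu^u_x(C_n)>0$ by the Gibbs bound. In dimension one, a ball $B(y,r)$ is covered by a bounded number of cylinders of diameter comparable to $r$. This uses bounded distortion together with the fact that consecutive cylinder diameters shrink by factors bounded above and below. Hence $\widehat\mu^u_x(B(y,r))\lesssim r^\delta$, so $\underline d\ge\delta$, and the mass distribution principle gives $\dim_H(V)\ge\delta$.

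\emph{Upper bounds.} Given $r>0$, stop each branch at the first $n$ with $\diam C_n\le r$. These stopped cylinders are disjoint, have diameters comparable to $r$, and each carries conditional mass comparable to $r^\delta$. Since the conditional measure is finite, their number is $\lesssim r^{-\delta}$, so $\overline{\dim}_B(V)\le\delta$. Likewise $B(y,r)$ contains the stopped cylinder of $y$, of mass $\gtrsim r^\delta$, so $\overline d\le\delta$.

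Combining these bounds gives $\delta\le\dim_H(V)\le\underline{\dim}_B(V)\le\overline{\dim}_B(V)\le\delta$. It also shows that $d_{\mu^u_x}=\delta$ everywhere on the support, hence almost everywhere, and therefore $\dim_H(\mu^u_x)=\delta^u(K)$ because the measure is exact dimensional.
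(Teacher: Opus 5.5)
The paper does not prove this statement; it quotes it from Pesin--Sadovskaya \cite{PS}. Your outline is correct and is essentially the Markov-system/Moran-cover argument used there: reduction to the Poincar\'e map via Abramov's formula and the cohomology $\Psi^u\sim-\log|D\mathcal R|_{E^u}|$, bounded distortion on one-dimensional unstable plaques, Gibbs bounds for the unstable conditionals, and covers by stopped cylinders. The step you flag is supplied by the local product structure of equilibrium states on basic sets (Bowen's construction from one-sided Gibbs measures, or Leplaideur's theorem), which gives a canonical family of unstable conditionals with density bounded away from $0$ and $\infty$ relative to the one-sided Gibbs measure; this yields $\mu^u_x(C_n)\asymp(\diam C_n)^{\delta^u(K)}$ on every plaque, which is also what makes the ``for every $x\in K$'' formulation meaningful.
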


\color{black}

\section{Horseshoes with large Hausdorff dimension}\label{sec:ThmA}

This section is devoted to the proof of Theorem~\ref{thm:dimension-flows} on the existence of horseshoes with Hausdorff dimension strictly larger than two for three-dimensional smooth flows modeled by suspensions of Gibbs-Markov maps.  The argument descends from the flow to a one-dimensional model, works there, and comes back and is engineered in two parts. 
First we construct sets $\Gamma\subset J$ whose Hausdorff dimension is arbitrarily close to $1$ at the level of the induced dynamics, and then construct $f$-invariant sets $B\subset I$ with the same property by saturation  (cf. Theorem \ref{thm:key}).
Next, we lift $B$ to a horseshoe $\widehat K$ of the
Poincar\'e map and, since every horseshoe has positive stable transverse
dimension, we show that $\widehat K_\vep$ can be chosen so that
$ 
\delta^s+\delta^u>1.
$ 
Finally, suspending this horseshoe adds one dimension, corresponding to
the flow direction. 

\medskip
Before to start with the proof properly, let us set up the basic content that will be assumed through this entire section.  Here after,  $M$ denotes a three-dimensional compact manifold. Let $X\in \mathfrak{X}^{1+\alpha}(M)$ be a $C^{1+\alpha}$-smooth vector field with an attractor $\Lambda\subset M$. Assume that $(X_t)_t$ is a $C^{1+\alpha}$-smooth flow on $\Lambda$ which is modeled by a suspension flow over a $C^{1+\alpha}$ full-branch, Gibbs--Markov expanding map. 

More precisely, assume that the dynamics of $\Lambda$ is modeled by a Poincaré maps $\mathcal R:\mathcal S\to \mathcal S$ whose  quotient along stable leaves is  a one dimensional map $f:I\to I$ that admits an induced a Gibbs-Markov map $F$ 
admitting an at most countable Markov partition $\mathcal P=\{P_i\}_{i\in\mathcal A}$ of $J$. 
 In addition, $F(x)=f^{R(x)}$, with return time  $R:J\to\N$. 
Define the  total inducing time  $$r_n=\sum_{j=0}^{n-1}R\circ F^{j}$$  and observe that $r_n$
constant on each $J_\omega$, $\omega\in\mathcal G_n$, and in particular
$F^{n}|_{J_\omega}=f^{\,r_n(\omega)}|_{J_\omega}$.

The Markov partition allows us to code orbits of $F$ by sequences in $\mathcal A^{\mathbb N}$, where every finite concatenation of admissible words corresponds to a cylinder set. In particular, every interval of the partition $\mathcal P^{(k)}=\bigvee_{j=0}^{k-1}F^{-j}\mathcal
P$ is labeled by an element $\omega=(\omega_1,\dots,\omega_k)\in\mathcal A^{k}$ we write $J_\omega$ for
the corresponding cylinder of $\mathcal P^{(k)}=\bigvee_{j=0}^{k-1}F^{-j}\mathcal
P$; by the full-branch property, $F^{k}|_{J_\omega}:J_\omega\to J$ is a
bijection. 
Furthermore,
the map $F$ has bounded distortion, i.e,  there is $C_d>0$ such that, for every $k\ge1$, every $\omega\in\mathcal A^{k}$
and all $x,y\in J_\omega$,
$$
\Big|\log\frac{|(F^{k})'(x)|}{|(F^{k})'(y)|}\Big|\le C_d .
$$
Moreover, the Gibbs-Markov property of $F$ gives the existence of a unique invariant probability measure $\mu$ equivalent to Lebesgue with bounded density and positive Lyapunov exponent $\chi_{\mu}$. 

\subsection{Invariant Cantor sets for Gibbs-Markov maps}
We are now ready to start with the proof of Theorem \ref{thm:dimension-flows}.  The first needed ingredient is the following Theorem that allows one to construct invriant cantor sets for $f$ with arbitrarily large Hausdorff dimension. 

\medskip

\begin{theorem}\label{thm:key}
Let $f:I\to I$ be a piecewise $C^{1+\alpha}$ map and suppose
that $f$ admits a full-branch
uniformly expanding Gibbs--Markov map $F:J\to J$ with bounded
distortion. Then there exists an increasing sequence of compact
$f$-invariant hyperbolic Cantor sets
$\Gamma_1\subset\Gamma_2\subset\cdots\subset I$ such that
$$ 
\lim_{N\to\infty}\dim_H(\Gamma_N)=1.
$$ 
Moreover, each $\Gamma_N$ involves only finitely many branches of
the inducing scheme.
\end{theorem}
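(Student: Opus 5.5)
We will work first with the induced map and then saturate by $f$. Enumerate the alphabet $\mathcal A=\{a_1,a_2,\dots\}$ of the Markov partition $\mathcal P$, and for each $N$ let $\mathcal A_N=\{a_1,\dots,a_N\}$. Define
$$\Gamma^F_N=\bigcap_{k\ge0}F^{-k}\Big(\bigcup_{i\le N}\overline{P_{a_i}}\Big),$$
the maximal $F$-invariant set whose orbits only visit the first $N$ branches. By the full-branch property, $F|_{\Gamma^F_N}$ is conjugate to the full shift on $N$ symbols. Each branch $F|_{P_{a_i}}=f^{R(a_i)}$ is $C^{1+\alpha}$, uniformly expanding, and has bounded distortion. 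Hence $\Gamma^F_N$ is a regular (dynamically defined) Cantor set of a $C^{1+\alpha}$ expanding IFS with finitely many maps. Its dimension $s_N$ is therefore the unique root of the Bowen equation $P(F|_{\Gamma^F_N},-s\log|F'|)=0$. Equivalently, by bounded distortion, $s_N$ is characterized up to uniform constants by
$$\sum_{\omega\in\mathcal A_N^k}|J_\omega|^{s}\asymp 1 .$$
Monotonicity $\Gamma^F_N\subset\Gamma^F_{N+1}$ is immediate.

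The key step is $s_N\to1$. Fix $s<1$. Since $\sum_{a\in\mathcal A}|P_a|=|J|$ (the partition has full Lebesgue measure), we get
$$\sum_{a\in\mathcal A}\Big(\frac{|P_a|}{|J|}\Big)^{s}>1 ,$$
strictly, because $s<1$ and there are at least two branches. Choose $N$ with $\sum_{i\le N}(|P_{a_i}|/|J|)^s>e^{C_d}$. Then bounded distortion gives, for every word $\omega\in\mathcal A_N^k$ and letter $a$,
$$|J_{\omega a}|\ge e^{-C_d}|J_\omega|\,|P_a|/|J| .$$
Iterating this shows that $\sum_{\omega\in\mathcal A_N^k}|J_\omega|^s$ grows exponentially in $k$, so $P(-s\log|F'|)>0$ on $\Gamma^F_N$ and $s_N\ge s$. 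The distortion constant enters once per level and might seem to ruin the estimate. To avoid this, we pass to blocks: apply the argument to $F^m$, whose branches are the cylinders of $\mathcal P^{(m)}$ and which has the same distortion constant $C_d$. The sum $\sum_{\omega}(|J_\omega|/|J|)^s$ over $m$-words tends to $\infty$ as $m\to\infty$ when $s<1$. This holds because $\max_\omega|J_\omega|\to0$ by uniform expansion while $\sum_\omega|J_\omega|=|J|$. So we can truncate to finitely many $m$-words beating $e^{C_d}$, and these words use only finitely many letters, lying inside some $\mathcal A_N$. The upper bound $s_N\le1$ is trivial.

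Finally we push forward to $f$. Set
$$\Gamma_N=\bigcup_{x\in\Gamma^F_N}\ \bigcup_{0\le j<R(x)}f^j(x).$$
This is a finite union of $f^j$-images of a compact set, since only the finitely many return times $R(a_1),\dots,R(a_N)$ occur, so $\Gamma_N$ is compact and $f$-invariant. Its dimension equals $s_N$, since $\Gamma^F_N\subset\Gamma_N$ and each $f^j$ restricted to the relevant pieces is a $C^1$ diffeomorphism onto its image, hence Lipschitz. Hyperbolicity follows because the orbits of $f$ in $\Gamma_N$ have derivative along $R(x)$ steps bounded below by the expansion rate of $F$, with uniformly bounded excursions in between. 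We then have
$$|(f^n)'|\ge c\,\lambda^{n/\max_{i\le N}R(a_i)} .$$

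The main obstacle I expect is technical. We must ensure that the pieces $f^j(\overline{P_{a_i}}\cap\Gamma^F_N)$ for $j<R(a_i)$ avoid the discontinuities of $f$, so that $\Gamma_N$ is genuinely compact and invariant and $f$ is $C^{1+\alpha}$ near it. The remedy I will try first is to replace each $P_a$ by the compact subinterval corresponding to the closure of the cylinder minus a small neighbourhood of its endpoints. Equivalently, we restrict to words avoiding the first and last subcylinders of each branch. This shrinks the dimension by an arbitrarily small amount, which is absorbed in the limit.
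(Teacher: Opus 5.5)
Your proposal is correct and is essentially the paper's proof: the block argument for $F^m$ (divergence of $\sum_\omega q_\omega^s$ because $\max_\omega q_\omega\to 0$ while $\sum_\omega q_\omega=1$, then bounded distortion, truncation to a finite family, positive pressure and Bowen's formula), nesting through an enumeration of $\mathcal A$, and $f$-saturation using the finitely many return times $R(a_1),\dots,R(a_N)$ for compactness and hyperbolicity. The paper simply asserts that the intermediate iterates are continuous on the relevant compact pieces. If you want to justify the point you flag, discard from the finite block family the at most two $m$-cylinders whose closures meet $\partial J$; this lowers the sum by at most $2\lambda^{-ms}$, and taking finite unions then restores nestedness. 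Do not instead delete the extreme letters of $\mathcal A_N$, since that can cost a definite amount of dimension when a fixed branch is adjacent to $\partial J$.
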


\begin{proof}

\noindent\textbf{Step 1: Construction of Cantor sets with large dimension for $F^n$.}

Let
$ 
\mathcal P=\{P_i:i\in\mathcal A\}
$ 
be the countable Markov partition of $J$ associated to $F$. By
assumption, for every $i\in\mathcal A$ the restriction
$F|_{P_i}:P_i\longrightarrow J$
is a $C^{1+\alpha}$ diffeomorphism onto $J$, there exists
$\lambda>1$ such that $|F'(x)|\geq\lambda$ 
on every branch, and there exists $C_d>0$ such that
\begin{equation}
    \label{eq:bdd3}
\left|
\log\frac{|(F^n)'(x)|}{|(F^n)'(y)|}
\right|
\leq C_d.
\end{equation}
for every
$n\geq1$, every $n$-cylinder $J_\omega$, $\omega\in\mathcal A^n$,
and every $x,y\in J_\omega$.
We proceed to construct finite-branch subsystems of $F$ that have Hausdorff
dimension arbitrarily close to one.

Fix $0<s<1$. For $n\geq1$ and $\omega\in\mathcal A^n$, set
$$
q_\omega=\frac{|J_\omega|}{|J|}.
$$
Since the $n$-cylinders form a partition of $J$ (mod 0) then 
$$\sum_{\omega\in\mathcal A^n}q_\omega=1.$$ 
On the other hand, since $|(F^n)'|\geq\lambda^n$ on every
$n$-cylinder and $F^n(J_\omega)=J$, the mean value theorem implies that 
$q_\omega\leq\lambda^{-n}$ and, consequently,

\begin{equation}
\begin{aligned}
\sum_{\omega\in\mathcal A^n}q_\omega^s
=
\sum_{\omega\in\mathcal A^n}
q_\omega q_\omega^{s-1} 
\geq
\lambda^{n(1-s)}
\sum_{\omega\in\mathcal A^n}q_\omega
=
\lambda^{n(1-s)}.
\end{aligned}
\label{3.4}
\end{equation}
which tends to infinity as $n\to+\infty$ (recall $s-1<0$).

For each $\omega\in\mathcal A^n$, denote by
$$
h_\omega=(F^n|_{J_\omega})^{-1}:J\longrightarrow J_\omega
$$
the corresponding inverse branch. By the mean value theorem, there
exists $\xi_\omega\in J_\omega$ such that
$ 
|(F^n)'(\xi_\omega)|
=
\frac{|J|}{|J_\omega|}
=
q_\omega^{-1}.
$ 
Hence, by bounded distortion, for every $x\in J$,
\begin{equation}
|h_\omega'(x)|
=
\frac{1}{|(F^n)'(h_\omega(x))|}
\geq e^{-C_d}q_\omega.
\label{3.5}
\end{equation}
Combining \eqref{3.4} and \eqref{3.5}, we obtain
$$
\sum_{\omega\in\mathcal A^n}
\left(\inf_{x\in J}|h_\omega'(x)|\right)^s
\geq
e^{-sC_d}\lambda^{n(1-s)}.
\label{3.6}
$$
Choose $n\ge 1$ sufficiently large so that $e^{-sC_d}\lambda^{n(1-s)}>1$ and extract a finite subset
$\mathcal G:=\mathcal G_n\subset\mathcal A^n$ 
such that
\begin{equation}
\sum_{\omega\in\mathcal G}
\left(\inf_{x\in J}|h_\omega'(x)|\right)^s>1.  
\label{eq:choice1}
\end{equation}
If $B_{\mathcal G}$ denotes the invariant Cantor set 
obtained as maximal invariant set of $F^n$ over $\bigcup_{\omega\in \mathcal G} J_\omega$, then $F^n|_{B_{\mathcal G}}$ is semiconjugate to the full shift
on the finite alphabet $\mathcal G$. For every $k\geq1$, the chain
rule gives
$$
\begin{aligned}
&\sum_{\omega_1,\ldots,\omega_k\in\mathcal G}
\inf_{x\in J}
\left|
(h_{\omega_1}\circ\cdots\circ h_{\omega_k})'(x)
\right|^s
\geq
\sum_{\omega_1,\ldots,\omega_k\in\mathcal G}
\prod_{j=1}^k \inf_J |h_{\omega_j}'|^s
\geq 
\left[
\sum_{\omega\in\mathcal G}
\left(\inf_{x\in J}|h_\omega'(x)|\right)^s
\right]^k.
\end{aligned}
$$
By construction (recall \eqref{eq:choice1}), one concludes that the right hand side above tends to infinity as $k\to+\infty$ and, consequently,
$$ P_{B_{\mathcal G}}
\left(
F^n,-s\log|(F^n)'|
\right)>0.$$ 
Since the pressure function
$$t\mapsto P_{B_{\mathcal G}}
\left(
F^n,-t\log|(F^n)'|
\right)$$ 
is strictly decreasing and its unique zero is
$\dim_H(B_{\mathcal G})$, Bowen's formula yields
$\dim_H(B_{\mathcal G})>s$, as desired. 

\medskip
\noindent\textbf{Step 2: Construction of nested Cantor sets}

We now construct the approximating sequence in a nested way.
Since the alphabet $\mathcal A$ is countable, fix an enumeration
$ \mathcal A=\{i_1,i_2,\ldots\},$ 
and, for every $N\geq 1$, set
$ 
\mathcal A_N=\{i_1,\ldots,i_N\}.
$ 
Denote by $B_N$ the maximal invariant set whose itinerary uses only
symbols in $\mathcal A_N$, that is,
$$
B_N=
\Big\{
x\in J:
F^j(x)\in\bigcup_{i\in\mathcal A_N}P_i
\text{ for every }j\geq0
\Big\}.
$$
By construction one knows that 
\begin{equation}
B_N\subset B_{N+1}, \quad 
\text{for every $N\geq1$.}
\label{eq:nested}
\end{equation}
We claim that
\begin{equation}
\lim_{N\to\infty}\dim_H(B_N)=1.
\label{eq:limBclaim}
\end{equation}
Indeed, by Step~1 for each $0<s<1$  there exist $n\geq1$ and a finite
family
$\mathcal G\subset\mathcal A^n$ such that
$\dim_H(B_{\mathcal G})>s.$ 
Since only finitely many symbols of $\mathcal A$ occur in the words
belonging to $\mathcal G$, there exists $N=N(s)$ such that every
symbol appearing in a word of $\mathcal G$ belongs to $\mathcal A_N$.
Therefore,
$B_{\mathcal G}\subset B_N,$ 
and, consequently, 
$$
\dim_H(B_N)\geq\dim_H(B_{\mathcal G})>s.
$$
This proves the claim.

\medskip
\noindent\textbf{Step 3: Saturation of the Cantor sets}

Let now $R:J\to\mathbb N$ denote the inducing time, so that
$ 
F(x)=f^{R(x)}(x),
$ 
with $R$ constant on each $P_i$. Fix $N\ge 1$. Since $B_N$ involves only
the finitely many domains $P_{a_1},\ldots,P_{a_N}$, we have
\begin{equation}
R_N:=
\max_{1\leq i\leq N}R(P_{a_i})<\infty.
\label{eq:defrn}    
\end{equation}
Define the $f$-saturation of $B_N$ by
$$
\Gamma_N
=
\bigcup_{x\in B_N}
\left\{
f^j(x):0\leq j<R(x)
\right\}.
$$
Since only finitely many inducing branches are involved, all the
intermediate iterates appearing above are continuous on the
corresponding compact pieces. Hence $\Gamma_N$ is compact.
Furthermore, it is simple to check that $\Gamma_N$ is $f$-invariant and, 
by the nesting property \eqref{eq:nested}, that
$\Gamma_N\subset\Gamma_{N+1}.$ 
Moreover, as
$B_N\subset\Gamma_N,$ 
it is clear that 
\begin{equation}
\dim_H(\Gamma_N)\geq\dim_H(B_N)
\quad \text{and}\quad 
\lim_{N\to\infty}\dim_H(\Gamma_N)=1.
\label{eq:limdh}    
\end{equation}

Finally, each $\Gamma_N$ is hyperbolic. Indeed, $F|_{B_N}$ is
uniformly expanding and the inducing times are bounded by
$R_N$. Since only finitely many branches and intermediate
iterates are involved, their derivatives are uniformly bounded away
from zero on the corresponding compact sets. Thus the uniform
expansion of the successive returns under $F$, together with the
uniform bound \eqref{eq:defrn} on the gaps between returns, implies the
existence of constants $C_N>0$ and $\lambda_N>1$ such that
$$
|(f^k)'(x)|\geq C_N\lambda_N^k
$$
for every $x\in\Gamma_N$ and every $k\geq1$. Hence $\Gamma_N$ is a
hyperbolic Cantor set for $f$.
\end{proof}

\subsection{Hyperbolic horseshoes with large dimension for three-dimensional flows.}
Let us now prove Theorem~\ref{thm:dimension-flows}.  
Assume $\Lambda\subset M$ is an attractor for $X$ and that its dynamics is modeled by a full-branch expanding  Gibbs-Markov map and   
let $\mathcal R: S\to S$ denote its Poincar\'e map.

\begin{lemma}
\label{le:dimPoincare}
There exists a $\mathcal R$-invariant horseshoe $\widehat K\subset S$
so that $\dim_H(\widehat K)>1$.
\end{lemma}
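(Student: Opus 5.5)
The plan is to lift the Cantor sets of Theorem~\ref{thm:key} through the stable foliation to horseshoes of $\mathcal R$. Then the McCluskey--Manning splitting \eqref{eq:MM-dimension-formula} gives $\dim_H(\widehat K)=\delta^s(\widehat K)+\delta^u(\widehat K)$, and it suffices to make $\delta^u$ as close to $1$ as we like while keeping $\delta^s$ bounded below by a positive constant.

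\textbf{Step 1 (lifting).} For each $N$, the set $\Gamma_N$ of Theorem~\ref{thm:key} is a compact, $f$-invariant, uniformly expanding Cantor set. It uses finitely many branches of the inducing scheme with return times bounded by $R_N$. I therefore define
$$\widehat K_N=\bigcap_{n\in\Z}\mathcal R^n\big((\pi^s)^{-1}(\Gamma_N)\cap S\big),$$
the maximal $\mathcal R$-invariant set over the stable leaves through $\Gamma_N$.

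Since only finitely many branches are involved, these leaves stay a definite distance from the discontinuity leaves $W^s(\Sing(X))\cap S$. On the compact piece $(\pi^s)^{-1}(\Gamma_N)$ the map $\mathcal R$ is a $C^{1+\alpha}$ diffeomorphism onto its image, and the return times are bounded. On this piece the stable direction is uniformly contracted by partial hyperbolicity \eqref{eq:contraction}. The expansion along $\Gamma_N$ given by Theorem~\ref{thm:key} transfers to a transverse cone field via the $C^1$ stable foliation.

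Hence $\widehat K_N$ is a locally maximal hyperbolic set of saddle type. It is conjugate, through $\pi^s$ on unstable plaques, to the natural extension of $f|_{\Gamma_N}$. Transitivity follows from the full-branch coding of $B_N$ and the saturation: each $P_{a_i}$ maps onto $J$, so the symbolic model is a full shift suspended by bounded times. Thus $\widehat K_N$ is a horseshoe.

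\textbf{Step 2 (unstable dimension).} Since the stable foliation is $C^1$, the holonomy $\pi^s$ restricted to a local unstable curve is a $C^1$ diffeomorphism onto its image, and in particular it is bi-Lipschitz. Consequently
$$\delta^u(\widehat K_N)=\dim_H\big(\widehat K_N\cap W^u_{\mathrm{loc}}(x)\big)=\dim_H\big(\Gamma_N\cap \pi^s(W^u_{\mathrm{loc}}(x))\big).$$
By Theorem~\ref{thm:PS-local-dim}, or simply because $f|_{\Gamma_N}$ is a conformal repeller so that its dimension is locally constant, this local dimension equals $\dim_H(\Gamma_N)$. That quantity tends to $1$ by \eqref{eq:limdh}.

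\textbf{Step 3 (stable dimension).} This is the step I expect to be the main obstacle. We need $\delta^s(\widehat K_N)\ge c>0$ uniformly in $N$, or at least for one large $N$ chosen after fixing $c$. The nesting $\Gamma_N\subset\Gamma_{N+1}$ gives $\widehat K_1\subset\widehat K_N$, and $\delta^s$ is monotone under inclusion of horseshoes, since it is the dimension of stable slices. Hence $\delta^s(\widehat K_N)\ge\delta^s(\widehat K_1)=:c$.

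Moreover $c>0$, because $\widehat K_1$ is a horseshoe rather than a periodic orbit, so it has positive entropy. Indeed, by the Bowen equation for $\psi^s$, one gets $\delta^s\ge h_{\topp}/\max|\log|D\mathcal R|_{E^s}||>0$ on the compact piece. Checking that the Bowen-equation solution for $\psi^s$ is strictly positive needs only that $\max|\psi^s|<\infty$ on $\widehat K_1$, which holds because the return times there are bounded.

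\textbf{Conclusion.} Choose $N$ with $\dim_H(\Gamma_N)>1-c$. Then
$$\dim_H(\widehat K_N)=\delta^s+\delta^u>c+1-c=1,$$
and we take $\widehat K=\widehat K_N$.
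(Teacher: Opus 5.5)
Your proposal follows the paper's proof: lift the nested Cantor sets through the stable foliation, split $\dim_H(\widehat K_N)=\delta^s+\delta^u$ by McCluskey--Manning, push $\delta^u$ towards $1$, and bound $\delta^s$ from below by monotonicity along the nested family. Your bi-Lipschitz holonomy argument for $\delta^u$ and the Bowen-equation bound $\delta^s\ge h_{\topp}/\max|\psi^s|$ supply details that the paper leaves implicit.

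There is one concrete error in Step~3: the choice of anchor $N=1$. With the enumeration from Step~2 of Theorem~\ref{thm:key}, $B_1$ is the maximal invariant set of the single full branch $F|_{P_{i_1}}:P_{i_1}\to J$. That set is a nested intersection of intervals of length at most $\lambda^{-k}|J|$, so it is just the fixed point of this branch. Hence $\Gamma_1$ and $\widehat K_1$ are single periodic orbits, with $h_{\topp}=0$ and $\delta^s(\widehat K_1)=0$. Your claim that $\widehat K_1$ is ``a horseshoe rather than a periodic orbit'' is therefore false, and $c=0$.

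The repair is immediate. Anchor at some $N_0\ge 2$: two full branches already give a full $2$-shift for $F$, so $\widehat K_{N_0}$ has positive entropy (the inducing times are bounded). Your Bowen-equation bound then gives $c:=\delta^s(\widehat K_{N_0})>0$. Finally choose $N\ge N_0$ with $\dim_H(\Gamma_N)>1-c$. This is the role played by the fixed $\vep_0$ in the paper's proof, where $d_0=\dim_H(\widehat K^s_{\vep_0})>0$ relies on $\Gamma_{\vep_0}$ being non-trivial, which holds for any $\vep_0<1$.
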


\begin{proof}
According to Theorem~\ref{thm:key}, for every $\vep >0$ there exists an $f$-invariant Cantor set $\Gamma_\vep \subset I$ on the interval so that
$\dim_H(\Gamma_\vep)>1-\vep$. 
Furthermore, the monotonicity of the construction ensures that we may assume that $\vep<\vep'$ implies $\Gamma_{\vep'}\subseteq \Gamma_{\vep}$. 
By construction, there exists $N_\vep\ge 1$ so that 
$$
\Gamma_\vep=
\Big\{
x\in J:
F^j(x)\in\bigcup_{i\in\mathcal A_{N_\vep}}P_i
\text{ for every }j\geq0
\Big\}
$$
(recall Step 2 in the proof of Theorem~\ref{thm:key}). Then the set
$$
\widehat K_\vep = 
\Big\{
x\in \mathcal S:
\mathcal R^j(x)\in\bigcup_{i\in\mathcal A_{N_\vep}} (\pi^s)^{-1}(P_i)
\text{ for every }j\geq0
\Big\}
$$
is a $\mathcal R$-invariant horseshoe so that $\pi^s(\widehat K_\vep) \supset \Gamma_\vep$.
By the McCluskey--Manning theorem,
$$
\dim_H(\widehat K_\vep) = \dim_H(\widehat K_\vep^u)+ \dim_H(\widehat K_\vep^s)
$$
while $\vep'<\vep$ implies $\widehat K_{\vep'}\subset \widehat K_{\vep}$, and so
$\vep \mapsto \dim_H(\widehat K_\vep^s)$
is a monotone decreasing map.
Fix $\vep_0>0$ and let $d_0=\dim_H(\widehat K_{\vep_0}^s)>0$. Choosing $0<\vep<\vep_0$ so that
$\dim_H(\widehat K_\vep^u)>1-d_0$ one concludes that
$$
\dim_H(\widehat K_\vep) = \dim_H(\widehat K_\vep^u)+ \dim_H(\widehat K_\vep^s)
    > 1-d_0 + \dim_H(\widehat K_{\vep_0}^s)>1.
$$

\end{proof}

\begin{proof}[Proof of Theorem \ref{thm:dimension-flows}]
    
It is enough to notice that the horseshoe $K$ obtained as suspension of $\widehat K$ has Hausdorff dimension larger than or equal to $\dim_H(\widehat K)+1$, hence larger than 2. This finishes the proof of the theorem. 

\end{proof}

\section{Lagrange and Markov Spectra}\label{sec:markov}

 The goal of this section is to prove Theorem~\ref{cor:Markov-spectrum}, which involves a reduction from continuous-time to discrete-time dynamics. Throughout this section we assume $X\in\mathfrak X^{2}(M)$, $\Lambda$ is a non-trivial attracting set with trapping region $U$, modeled by a full-branch expanding Gibbs--Markov map, and $\Sing(X)\cap\Lambda$ is hyperbolic whenever it is
non-empty.
For every $Y$ sufficiently $C^1$-close to $X$ the set
\begin{equation}
    \label{eq:continuation}
    \Lambda_Y:=\bigcap_{t\ge0}Y_t(U)
\end{equation} 
is again an attracting set, which we call the \emph{continuation} of $\Lambda$. The continuation $\Lambda_Y$ need not be transitive, non-uniformly hyperbolic or modeled by a Gibbs-Markov map. Indeed, we shall not need it. Since each singularity
$\sigma\in\Sing(X)\cap\Lambda$ is hyperbolic, there are finitely many of them, and each one admits a unique hyperbolic continuation $\sigma_Y\in\Sing(Y)\cap\Lambda_Y$. Recall that the continuation  preserves their stable and unstable indices.
After possibly shrinking the neighbourhood of $X$ we may assume that $S$ remains
a global cross-section for every $\Lambda_Y$, so that $Y$ has a Poincar\'e return
map $\cR_Y$ defined on its natural domain in $S$, with $\cR_X=\cR$.

If $\widehat K\subset S$ is a horseshoe for $\cR$, then it admits a unique
hyperbolic continuation $\widehat K_Y$, which is a horseshoe for the
discrete-time system $\cR_Y$. Define its flow-saturation by
\begin{equation}\label{eq:KY}
K_Y:=\big\{Y_t(x):x\in\widehat K_Y,\ 0\le t\le\tau_Y(x)\big\}.
\end{equation}
Since $\widehat K_Y$ is compact, invariant, and disjoint from the
discontinuity set of $\cR_Y$, the return time $\tau_Y$ is continuous
and bounded on $\widehat K_Y$. It follows that $K_Y$ is a compact
$Y$-invariant hyperbolic horseshoe for the flow $(Y_t)_t$.
Moreover, since $K_Y\subset U$ and $K_Y$ is $Y$-invariant, then
\begin{equation}\label{eq:KYinLY}
K_Y=Y_t(K_Y)\subset Y_t(U)
\qquad\text{for every }t\ge0.
\end{equation}
Therefore, it follows that  $K_Y\subset\Lambda_Y.$

\subsection{A key auxiliary result}

The next result provides an \emph{abstract} perturbation theorem for the robustness of large Lagrange and Markov spectra for diffeomorphisms.

\color{black}

\begin{theorem}[Main Theorem in \cite{MR17}]\label{thm:MR17}
Let $\widehat K$ be a horseshoe of a $C^2$ surface diffeomorphism $\varphi:N\to N$ so that $\dim_H(\widehat K)>1$. Then, arbitrarily $C^2$-close to
$\varphi$, there are a diffeomorphism $\varphi_0$ and a $C^2$-open neighbourhood
$\cW$ of $\varphi_0$ such that, for every $\widetilde\varphi\in\cW$, denoting by
$\widehat K_{\widetilde\varphi}$ the continuation of $\widehat K$, there is an
open and dense set $\cH(\widetilde\varphi,\widehat K_{\widetilde\varphi})\subset
C^1(N,\R)$ with
$$
\operatorname{int}\Lspec\big(\widetilde\varphi,\phi,\widehat K_{\widetilde\varphi}\big)\neq\emptyset
\quad\text{and}\quad
\operatorname{int}\Mspec\big(\widetilde\varphi,\phi,\widehat K_{\widetilde\varphi}\big)\neq\emptyset
$$
for every $\phi\in\cH(\widetilde\varphi,\widehat K_{\widetilde\varphi})$.
\end{theorem}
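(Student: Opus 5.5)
The plan is to follow the strategy of Moreira and Roma\~na: reduce the statement to a result on images of products of regular Cantor sets under $C^1$ functions, and obtain the required genericity from a $C^2$ perturbation of $\varphi$.

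The first step is to fix local coordinates near a point of $\widehat K$ in which the local stable and unstable manifolds are straightened. In these coordinates $\widehat K$ is locally a product $K^s\times K^u$ of two dynamically defined regular Cantor sets of class $C^2$. By McCluskey--Manning, \eqref{eq:MM-dimension-formula},
$$
\dim_H(K^s)+\dim_H(K^u)=\dim_H(\widehat K)>1 .
$$
Since the dimension of horseshoes varies continuously (Remark~\ref{rmk:PalisViana}), this inequality survives on a whole $C^2$-neighbourhood. By the same continuity, it also survives after passing to subhorseshoes whose dimension is close to that of $\widehat K$.

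The second step is the perturbation. Following Moreira--Yoccoz, we perturb $\varphi$ in the $C^2$ topology to some $\varphi_0$ for which the pair $(K^s,K^u)$ is not essentially affine, meaning the limit geometries of the renormalisations are not all affine. Under this condition the scale recurrence lemma applies. Its consequence is the following property, valid on a $C^2$-open set $\cW$ around $\varphi_0$: for every $C^1$ function $g$ whose partial derivatives do not vanish on the relevant region, the image $g(K^s_{\widetilde\varphi}\times K^u_{\widetilde\varphi})$ contains an interval. Applied to suitable sub-Cantor sets, this gives a robust open-image statement. I expect this step to be the main obstacle. It requires the renormalisation machinery, and I would import it from Moreira--Yoccoz rather than reprove it.

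The third step is the dynamical reduction. Fix $\widetilde\varphi\in\cW$. Let $\cH$ be the set of $\phi\in C^1$ whose maximum on $\widehat K_{\widetilde\varphi}$ is attained at a single point $x_M$ at which $\nabla\phi$ is transverse to both $E^s$ and $E^u$. This set is open and dense. Choose a subhorseshoe $\widetilde K\subset\widehat K_{\widetilde\varphi}$ that avoids a small neighbourhood of the orbit of $x_M$ and still has $\dim_H>1$.

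Next consider orbits that shadow $\widetilde K$ except for a single passage through a small rectangle near $x_M$. Along such an orbit, $\sup\phi$ equals $\phi$ evaluated at a point of a local product Cantor set near $x_M$. Because $\phi$ near its maximum exceeds its values on $\widetilde K$, the other points of the orbit do not affect the supremum. Hence $\Mspec$ contains
$$
\phi\big(\widetilde K^s\times\widetilde K^u\big),
$$
which by the second step contains an interval.

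For $\Lspec$, we instead take orbits that pass near $x_M$ infinitely often with increasingly precise approach, for instance by concatenating periodic pieces. Then the $\limsup$ realises the same values, up to a closure argument, so $\Lspec$ contains the same interval. Finally, openness of the construction in $\phi$ and in $\widetilde\varphi$ comes from the persistence of hyperbolic continuations together with the openness of the stable-intersection condition.
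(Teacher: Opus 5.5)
The paper does not prove this statement. It imports it verbatim from \cite{MR17} and uses it as a black box, with Remark~\ref{rem:local-MR} to localize it. Your outline follows the architecture of \cite{MR17}: a Moreira--Yoccoz genericity condition on pairs of Cantor sets, interior of $C^1$ images of products of Cantor sets, observables with a unique transverse maximum, and a subhorseshoe avoiding that maximum. However, two steps fail as written.

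\textbf{The Cantor-set input is misstated and applied to the wrong horseshoe.} Non-essential affinity together with the scale recurrence lemma does not by itself give intervals in $g(K^s\times K^u)$. What \cite{MY01} supplies is an open dense set $\mathcal V$ of pairs with $\dim_H K^s+\dim_H K^u>1$ that admit a recurrent compact set of relative configurations, and this is what produces intervals. The more serious problem is the order of quantifiers. Step 2 secures the property on $\cW$ only for the pair $(K^s,K^u)$ of $\widehat K$. Step 3 then uses $\widetilde K^s\times\widetilde K^u$ for a subhorseshoe $\widetilde K$ chosen \emph{after} $\widetilde\varphi$ and $\phi$, since $\widetilde K$ depends on $x_M$.
\begin{itemize}
\item Membership in $\mathcal V$ is not inherited by the Cantor sets of a subhorseshoe obtained by forbidding words, because the renormalizations that make a configuration recurrent may use forbidden words. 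So ``applied to suitable sub-Cantor sets'' is unjustified.
\item Imposing the condition on all countably many candidate subhorseshoes would give only a residual set of diffeomorphisms, not an open $\cW$.
\item The fix is to choose, \emph{before} perturbing, finitely many subhorseshoes of dimension $>1$ with empty common intersection. For example, take two disjoint ones: points avoiding a long word $w$, and points whose itineraries are concatenations of blocks containing $w$. Both have dimension close to $\dim_H\widehat K$.
\item Then perturb so that each of their Cantor-set pairs lies in $\mathcal V$. This is an open condition in $\widetilde\varphi$, and for every $\phi$ the point $x_M$ misses one of these subhorseshoes.
\end{itemize}

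\textbf{The claim $\sup_n\phi(\widetilde\varphi^{\,n}(y))=\phi(y)$ on your local product is not justified.} An orbit going from $\widetilde K$ into a small neighbourhood of $x_M$ and back must follow the orbit of $x_M$ for a long stretch on entry and exit, and it can come close to $x_M$ again during that stretch. If $x_M$ is periodic, the orbit lingers near it for many iterates. Knowing $\max_{\widetilde K}\phi<\phi(x_M)$ controls only the tails, not these transition segments. Also, ``a neighbourhood of the orbit of $x_M$'' can be all of $\widehat K$ when that orbit is dense; you only need to avoid a neighbourhood of $x_M$ itself. A standard repair:
\begin{itemize}
\item Take $q\in W^s(\widetilde K)\cap W^u(\widetilde K)\cap\widehat K_{\widetilde\varphi}$ close to $x_M$.
\item Since $\limsup_{|j|\to\infty}\phi(\widetilde\varphi^{\,j}(q))\le\max_{\widetilde K}\phi<\phi(q)$, the maximum of $\phi$ along $\mathcal O(q)$ is attained at finitely many times. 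After a shift, assume it is attained at $q$. Ties can be handled by passing to a sub-cylinder on which one competing time dominates, by Baire.
\item By uniqueness of the maximum on $\widehat K_{\widetilde\varphi}$, this point is still near $x_M$, so the gradient condition holds there.
\item Consider points whose itinerary agrees with that of $q$ on a long window $[-N,N]$ and is $\widetilde K$-admissible outside it. They form a $C^1$ image of a product of cylinders of $\widetilde K^s$ and $\widetilde K^u$.
\item The finitely many strict gaps, together with the margin $\phi(q)-\max_{\widetilde K}\phi$, force the supremum along their orbits to be attained at time $0$.
\end{itemize}
This gives the interval in $\Mspec$. Periodic approximations of these points then give the same interval in $\Lspec$.
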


\begin{remark}
\label{rem:local-MR}
Even though Theorem~\ref{thm:MR17} is stated for surface diffeomorphisms, 
its proof only involves the
dynamics in a neighborhood of the horseshoe, through a Markov
partition and the corresponding stable and unstable regular Cantor
sets.
Thus, choosing an open neighborhood $V$ of he Cantor set, 
all the relevant objects, as well as their hyperbolic continuations
under sufficiently small $C^2$ perturbations, remain contained in
$V$. Hence the conclusion of Theorem~\ref{thm:MR17} applies to $C^2$ maps
$
\varphi:V\longrightarrow \varphi(V)\subset N
$
which are diffeomorphisms onto its image.
\end{remark}

Theorem~\ref{thm:MR17} is stated for $C^2$ discrete-time systems, while
our setting concerns flows. The main point is therefore to realize the
perturbation of the Poincar\'e map provided by
Theorem~\ref{thm:MR17} as the return map of a $C^2$-close vector field.
This is the content of Subsection~\ref{subset:realiz} below.

\subsection{Realizing a perturbation of the return map by a perturbation of the field}\label{subset:realiz}

In this subsection we will prove  a perturbation result similar to Theorem~\ref{thm:MR17} in the flow setting. Assume that $S$ is
a global cross-section to the flow ${X_t}\mid_\Lambda$ and $\tau(\cdot)$ denotes the Poincar\'e first return time map, as described in the beginning of the section. Denote $$\mathfrak{W}=W^{s}(\Sing (X))\cup W^{u}(\Sing (X)).$$

\begin{proposition}\label{prop:realization}
Let $V\subset S$ be an open set so that $\overline V$ is disjoint from $\mathfrak{W}$, and assume that 
that $\cR|_{V}$ is a $C^{2}$-diffeomorphism onto its image and $0<\tau_0\le\tau(x)\le\tau_1$, for every  $x\in\overline V$. Let $W\subset\overline W\subset V$ be  an open set.
Then for every $\eta>0$ there is $\rho>0$ with the following property: 
for every $C^{2}$ diffeomorphism $\varphi_0:V\to\varphi_0(V)\subset S$ with
$\|\varphi_0-\cR|_{V}\|_{C^{2}}<\rho$ there are an open set $W\subset V'\subset V$ and a vector field $Y\in\mathfrak X^{2}(M)$
such that
\begin{enumerate}
\item[\rm(i)] $\|Y-X\|_{C^{2}}<\eta$;
\item[\rm(ii)] $ Y=X$ outside a flow box contained in $U$; 
 \item[\rm(iii)] ${\mathcal R}_Y|_W=\phi_0|_W$. 
\end{enumerate}
\end{proposition}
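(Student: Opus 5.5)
Since $\overline V$ is compact, avoids $\mathfrak W$, and has return times in $[\tau_0,\tau_1]$, the flow from $\overline V$ until its first return is a compact family of regular orbit segments. The plan is to place a thin flow box $\mathcal B$ around these segments, near the end of the trajectory just before it hits $S$, and to modify $X$ only inside $\mathcal B$. There we insert a compactly supported ``twist'' whose effect on the transit map is $\varphi_0\circ\cR^{-1}$ on $\cR(W)$. Concretely, fix a small $\delta\in(0,\tau_0/2)$ and a local cross-section $\Sigma=X_{-\delta}(\cR(V))$, well defined for $\delta$ small and disjoint from $S$ away from the endpoints. Then $\mathcal B=\{X_t(y):y\in\Sigma',\,0\le t\le\delta\}$, with $\Sigma'\Subset X_{-\delta}(\cR(V))$ a neighbourhood of $X_{-\delta}(\cR(\overline W))$, is a flow box contained in $U$ (shrink $\delta$ and use that $U$ is a neighbourhood of $\Lambda$ containing the relevant orbit pieces). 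In flow-box coordinates $(y,t)\in\Sigma'\times[0,\delta]$, $X=\partial_t$, and the transit map $\Sigma'\times\{0\}\to\Sigma'\times\{\delta\}$ is the identity composed with $X_\delta$.

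Next we write the desired correction as a diffeomorphism of the section. Define $\psi=X_{-\delta}\circ\varphi_0\circ\cR^{-1}\circ X_\delta$ on $\Sigma'$. Here $\varphi_0$ lands in $S$ near $\cR(V)$, and $X_{-\delta}$ is defined there by composing with the projection along flow lines to $S$. Then $\psi$ is $C^2$-close to the identity, with $\|\psi-\mathrm{id}\|_{C^2}\le C\rho$, and $C$ depends only on $X$, $\delta$ and $V$. Choose a bump $\beta$ equal to $1$ on a neighbourhood of $X_{-\delta}(\cR(\overline W))$, supported in $\Sigma'$. Consider the isotopy $\psi_s=\mathrm{id}+\beta\,\chi(s)(\psi-\mathrm{id})$, $s\in[0,\delta]$, with $\chi$ a smooth monotone function that is $0$ near $0$ and $1$ near $\delta$. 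For $\rho$ small each $\psi_s$ is a diffeomorphism, and the isotopy is the identity near $\partial\Sigma'$. Set $Y=\partial_t+Z$ in $\mathcal B$, where $Z(\psi_s(y),s)=\partial_s\psi_s(y)$ is the generator of the isotopy (a horizontal field), and $Y=X$ elsewhere. Then the $Y$-orbit of $(y,0)$ is $(\psi_s(y),s)$, so crossing $\mathcal B$ realizes $\psi$ on the region where $\beta=1$. Moreover $Z$ vanishes near $\partial\mathcal B$, so $Y$ is $C^2$ and (ii) holds. Since $\|Z\|_{C^2}\le C'\|\psi-\mathrm{id}\|_{C^2}$, with $C'$ depending on $\delta$, $\chi$, $\beta$ and the chart, taking $\rho$ small gives (i).

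To check (iii), take $x\in W$. Its $Y$-orbit coincides with its $X$-orbit until it enters $\mathcal B$ at $X_{\tau(x)-\delta}(x)$, because $Y=X$ off $\mathcal B$. The $Y$-orbit cannot meet $S$ earlier, since the first-hit structure is unchanged outside $\mathcal B$. After crossing $\mathcal B$ it exits at $\psi(X_{\tau(x)-\delta}(x))$, which then flows under $X$ to $\varphi_0(x)\in S$. Hence $\cR_Y(x)=\varphi_0(x)$. We take $V'=\cR^{-1}(\{\beta=1\}$ pushed by $X_\delta)\cap V\supset\overline W$. One must also ensure that no other piece of orbit of $S$ (returning from outside $V$) passes through $\mathcal B$ in a way that breaks the construction. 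This is harmless for (iii) because it only concerns points of $W$, and for (i)--(ii) nothing changes.

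The main obstacle is controlling the flow box. It must be embedded and must not intersect $S$, and the $C^2$ estimate of $Y-X$ must be uniform. Both follow from compactness and the bounds $\tau_0\le\tau\le\tau_1$. What needs care is transferring $\varphi_0$, which maps into $S$, to the auxiliary section $\Sigma$ via $X_{-\delta}$ with $C^2$ bounds. That transfer uses that $X$ is $C^2$ and that $S$ is uniformly transverse to $X$ on the compact set $\cR(\overline V)$.
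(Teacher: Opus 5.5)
Your argument is correct and is essentially the paper's construction: a thin flow box in whose coordinates $X=\partial_t$ is replaced by $\partial_t$ plus the horizontal generator of an isotopy from the identity to a $C^2$-small correction, together with the check that orbits starting in $W$ cannot meet the modified region before their first return. The only difference is where the box sits. You put it just before the return and realize $\varphi_0\circ\cR^{-1}$, transported to the auxiliary section $\Sigma=X_{-\delta}(\cR(V))$. The paper puts it just after leaving $S$ and realizes $\psi=(\cR|_V)^{-1}\circ\varphi_0$ directly on $S$, which avoids the transfer through $X_{\pm\delta}$ that you identified as the delicate step.
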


\begin{proof}
Fix a Riemannian metric on $S$ and denote by $\exp$ the corresponding
exponential map. Choose an open set $V'$ such that
$ 
\overline W\subset V'\subset V.
$ 
For $\rho>0$ sufficiently small, every $C^2$ diffeomorphism
$\varphi_0:V\to\varphi_0(V)$ satisfying
$ 
\|\varphi_0-\cR|_V\|_{C^2}<\rho
$ 
also satisfies $\varphi_0(V')\subset\cR(V)$. Hence the map
$$
\psi:=(\cR|_V)^{-1}\circ\varphi_0
$$
is well defined on $V'$ and is a $C^2$ diffeomorphism onto its image.
Moreover,
\begin{equation}\label{eq:psi-close}
\|\psi-\mathrm{id}\|_{C^{2}}=\|\cR^{-1}\circ\varphi_0-\cR^{-1}\circ\cR\|_{C^{2}}\le C_0\,\|\varphi_0-\cR|_{V}\|_{C^{2}},
\end{equation}

with $C_0$ depending only on $\|\cR^{-1}\|_{C^{2}}$ on a compact neighbourhood of $\overline W$. By construction one gets $\cR\circ\psi=\varphi_0$ on $V'$.

\smallskip
Now, recall that by assumption $0<\tau_0\le\tau(x)\le\tau_1$, so we can fix $0<\delta<\tau_0$. Since $\delta<\tau_0$, no point of $V'$ returns to $S$ within time $\delta$. Since $\overline V$ contains no singularity, the map $\Theta(x,t)=X_t(x)$ is a $C^{2}$-embedding of $V'\times[0,\delta]$ into $M$.  Let $\cB=\Theta(V'\times[0,\delta])$ be the corresponding flow box. Notice that, in the
coordinates given by $\Theta$, the field $X$ is the constant vector field $\partial_t$.
\smallskip

Let $0<\gamma\ll \delta$ be small and let $\chi:[0,\delta]\to [0,1]$ be a $C^\infty$ bump function satisfying
\begin{enumerate}
    \item $\chi(s)=0$, for every $s\in [0,\gamma]$;
    \item $\chi(s)=1$ for every $s\in [\delta-\gamma,\delta]$;
    \item $\chi$ is strictly increasing in $[\gamma,\delta-\gamma]$. 
\end{enumerate}
Consider also a $C^\infty$ bump function $\xi: S\to [0,1]$ with 
\begin{equation}
\xi\equiv1 \text{ on } W \text{ and }
\operatorname{supp}\xi\subset V'.
\end{equation}
Since $\psi$ is $C^{2}$-close to the identity, there is a unique $C^{2}$ vector
field $v$ on $V'$, tangent to $S$, with
\begin{equation}\label{eq:v}
  \psi(x)=\exp_x\big(v(x)\big),
  \qquad
  \|v\|_{C^{2}}\le C_0'\,\|\psi-\mathrm{id}\|_{C^{2}} .
\end{equation}
Define, for $t\in[0,\delta]$, the maps
\begin{equation}\label{eq:isotopy}
  \Psi_t:S\to S,\qquad
  \Psi_t(x)=\begin{cases}
    \exp_x\big(\chi(t)\,\xi(x)\,v(x)\big) & ,  x\in V',\\[2pt]
    \hspace{1.4cm} x & , x\notin V' .
  \end{cases}
\end{equation}
As $v$ is $C^2$ close to the null vector field, 
$\Psi_t$ is a $C^{2}$ diffeomorphism of $S$. Moreover, the family is $C^{2}$-smooth in $(x,t)$,
and
$$
\Psi_0=\mathrm{id},\qquad \Psi_\delta=\psi\ \text{ on } W,
$$
while $\Psi_t=\mathrm{id}$ outside $\operatorname{supp}\xi$ for all $t$ close to
$0$. 
Let \begin{equation}\label{eq:generator}
  w_t:=\big(\partial_t\Psi_t\big)\circ\Psi_t^{-1},
\end{equation}
be the time-dependent vector field on $S$ (hence in $V'$) that generates this
isotopy.  Notice that $t\mapsto\Psi_t(x)$ is the integral curve of $w_t$. 

We now define the desired perturbed vector field $Y$, in two steps. First, on $\mathcal C:=V'\times[0,\delta]$ consider the vector field
\begin{equation}\label{eq:Z}
  Z(x,t):=\big(w_t(x),1\big).
\end{equation}
The curve $t\mapsto\big(\Psi_t(x),t\big)$ has velocity
$\big(\partial_t\Psi_t(x),1\big)$, which by \eqref{eq:generator} equals
$Z\big(\Psi_t(x),t\big)$; hence the flow of $Z$ maps $(x,0)$ to
$\big(\Psi_t(x),t\big)$. Moreover $Z=\partial_t$ near $\partial\mathcal C$,
because $w_t$ vanishes there.
Second, we transport $Z$ to $M$ by the embedding $\Theta$ and set
\begin{equation}\label{eq:Xtilde}
  Y:=\Theta_{*}Z \ \text{ on } \cB,
  \qquad
  Y:=X \ \text{ on } M\setminus\cB .
\end{equation}
This is well defined because $\Theta$ is an embedding, so every point of $\cB$
has unique coordinates $(x,t)$. The two definitions agree near $\partial\cB$:
there $Z=\partial_t$, and $\Theta_{*}(\partial_t)=X$ by the very choice of the
flow-box coordinates. Consequently $Y-X$ has compact support in the
interior of $\cB$, and $Y$ is a vector field of class $C^{2}$ on $M$
which coincides with $X$ near $S$.

Since $\Theta$ conjugates the flow of $Z$ with the flow of $Y$ on
$\cB$, the piece of $Y$-orbit of a point $x\in W\subset S$ in time $[0,\delta]$ is
$t\mapsto\Theta\big(\Psi_t(x),t\big)$; in particular it leaves the flowbox at
\begin{equation}\label{eq:exit}
  \Theta\big(\Psi_\delta(x),\delta\big)=\Theta\big(\psi(x),\delta\big)
  =X_\delta\big(\psi(x)\big).
\end{equation}
Furthermore,
\begin{equation}\label{eq:field-estimate}
  \|Y-X\|_{C^{2}}
  \le C_1\sup_{t\in[0,\delta]}\|w_t\|_{C^{2}}
  \le C_1\|\chi'\|_{C^{0}}\|\xi\|_{C^{2}}\|v\|_{C^{2}}
  \le \frac{2C_1C_0'\|\xi\|_{C^{2}}}{\delta}\,\|\psi-\mathrm{id}\|_{C^{2}},
\end{equation}
where $C_1>0$ bounds the distortion of $\Theta$ on the compact set
$\overline{\mathcal C}$. Together with \eqref{eq:psi-close}, the right-hand side
of \eqref{eq:field-estimate} is smaller than $\eta$ once $\rho$ is small enough.

\smallskip
 As the perturbation is supported in
$\cB\subset U\setminus\Sing(X)$, items (i) and (ii) follow.
It remains to compute $\cR_{Y}$ on $W$. For $x\in W$ the
$Y$-orbit of $x$ traverses $\cB$ and leaves it at the point
$\Theta(\Psi_\delta(x),\delta)=X_\delta(\psi(x))$. Outside $\cB$ one has
$Y=X$, so the orbit continues as the $X$-orbit of $X_\delta(\psi(x))$
until it meets $S$, which happens at $\cR(\psi(x))$. It cannot re-enter $\cB$
before doing so: every point of $\cB$ is of the form $X_t(y)$ with $y\in V'
\subset S$ and $0\le t\le\delta$, so re-entering $\cB$ would require crossing
$S$ first. Hence
$$
\cR_{Y}(x)=\cR(\psi(x))=\varphi_0(x),\qquad x\in W,
$$
and (iii) holds after renaming $W$ as $V'$.
\end{proof}

\begin{remark}
   The vector field $Y$  constructed in Proposition \ref{prop:realization} is so that  $\Sing(Y)=\Sing(X)$, that $U$ is still a trapping region for $Y$ and $S$ is a global cross-section for $\Lambda_{Y}$.
\end{remark}

\subsection{Relating discrete and continuous spectra}
\label{sec:relation}

Given a vector field $Y\in \mathfrak X^2(M)$ with a an attracting set $\Lambda$ and a Poincar\'e map $\cR_Y$ and a smooth observable $\phi\in C^{1}(U,\R)$, define
\begin{equation}\label{eq:phihat}
\widehat\phi:V'\to\R,\qquad
\widehat\phi(x)=\max_{0\le t\le\tau_Y(x)}\phi\big(Y_t(x)\big).
\end{equation}
Recall the saturation $K_Y$ of a set 
$\widehat K_Y$
as defined in ~\eqref{eq:KY}.

\begin{lemma}\label{lem:induced-observable}
With the notation above:
\begin{enumerate}
\item[\rm(i)] $\widehat\phi$ is continuous on $\overline{V'}$;
\item[\rm(ii)] for every $x\in K_Y$ there is $x_0\in\widehat K_Y$ with
$$
\limsup_{t\to+\infty}\phi\big(Y_t(x)\big)
=\limsup_{n\to+\infty}\widehat\phi\big(\cR_Y^{\,n}(x_0)\big)
\quad\text{and}\quad
\sup_{t\in\R}\phi\big(Y_t(x)\big)=\sup_{n\in\Z}\widehat\phi\big(\cR_Y^{\,n}(x_0)\big),
$$
so that
$\Lspec\big((Y_t)_t,K_Y,\phi\big)=\Lspec\big(\cR_Y,\widehat\phi,\widehat K_Y\big)$
and likewise for $\Mspec$;
\item[\rm(iii)] the set
$\cH_Y:=\big\{\phi\in C^{1}(U,\R)\ :\ \widehat\phi|_{V'}\in
\cH\big(\cR_Y,\widehat K_Y\big)\big\}$
is open and dense in $C^{1}(U,\R)$.
\end{enumerate}
\end{lemma}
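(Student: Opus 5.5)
I would prove the three items in order, since (iii) rests on (i) and (ii).

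For (i), fix $x_0\in\overline{V'}$. Since $\overline{V'}\subset\overline V$ is disjoint from $\mathfrak W$, the return time $\tau_Y$ is continuous near $x_0$ and bounded by some $\tau_1'$. This is preserved for $Y$ close to $X$, because the perturbation of Proposition~\ref{prop:realization} is supported in a flow box that orbits cross transversally. The map $(x,s)\mapsto\phi(Y_{s\tau_Y(x)}(x))$ is continuous on $\overline{V'}\times[0,1]$, and $\widehat\phi(x)=\max_{s\in[0,1]}$ of it. The maximum over a compact parameter of a jointly continuous function is continuous, which gives (i).

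For (ii), take $x\in K_Y$. By \eqref{eq:KY} we can write $x=Y_{t_0}(x_0)$ with $x_0\in\widehat K_Y$ and $0\le t_0\le\tau_Y(x_0)$. The orbit of $x$ is then the concatenation of the arcs $\{Y_t(\cR_Y^n(x_0)):0\le t\le\tau_Y(\cR^n_Y x_0)\}$ for $n\in\Z$. This uses that $\widehat K_Y$ is $\cR_Y$-invariant and avoids the singular leaves, so backward returns exist too.

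The supremum of $\phi$ over the whole orbit is the supremum over $n$ of the maxima on these arcs. Those maxima are exactly $\widehat\phi(\cR_Y^n x_0)$, which gives the $\Mspec$ identity.

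For the $\limsup$, $\tau_Y$ is bounded on $\widehat K_Y$, so as $t\to\infty$ the arc index $n\to\infty$ and the arcs cover all large times. Hence $\limsup_t\phi(Y_t x)=\limsup_n\widehat\phi(\cR_Y^n x_0)$. Letting $x$ range over $K_Y$ makes $x_0$ range over all of $\widehat K_Y$, so the spectra coincide.

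Item (iii) is the main obstacle. The map $\phi\mapsto\widehat\phi$ is continuous from $C^1(U)$ to $C^0$, but $\cH(\cR_Y,\widehat K_Y)$ is open and dense in $C^1$, not $C^0$. Worse, $\widehat\phi$ is in general only continuous, not $C^1$.

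My plan is to go through the structure of the set $\cH$ in \cite{MR17}. That set consists of observables whose maximum on the relevant Markov piece of the horseshoe is attained at a unique point, with a nondegeneracy (transversality) condition at that point.

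For openness: take $\phi$ such that $\phi$ restricted to the saturated piece of $K_Y$ has a unique nondegenerate maximum at a point of the form $Y_{t^*}(x^*)$ with $0<t^*<\tau_Y(x^*)$. By the implicit function theorem, $\widehat\phi(x)=\phi(Y_{t(x)}(x))$ near $x^*$ with $t(x)$ of class $C^1$, so $\widehat\phi$ is $C^1$ near the maximum. Small $C^1$ perturbations of $\phi$ then give $C^1$-small perturbations of $\widehat\phi$ there, which yields openness.

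For density: first perturb $\phi$ slightly so that its maximum on the relevant part of $K_Y$ is interior to the time interval and nondegenerate. Then, using a bump function supported in a flow box near $Y_{t^*}(x^*)$, realize any prescribed small $C^1$ perturbation of $\widehat\phi$ near $x^*$ as a $C^1$-small perturbation of $\phi$. This places $\widehat\phi$ in the dense set $\cH$, possibly after shrinking $V'$ to a neighborhood of the piece of $\widehat K_Y$ where the maximum is attained.
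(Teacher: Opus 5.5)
\textbf{Items (i) and (ii).} These are correct and follow the paper's own argument. For (i) you use joint continuity together with a maximum over the compact window $s\in[0,1]$. For (ii) you decompose the orbit of $x_0\in\widehat K_Y$ into return arcs, with $\tau_Y$ bounded above and below on $\widehat K_Y$. The problem is (iii): there your route differs from the paper's, and it has a genuine gap.

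\textbf{The openness step fails in the topology of the statement.} Applying the implicit function theorem to $\partial_t\phi(Y_t(x))=D\phi(Y_t(x))\,Y(Y_t(x))=0$ needs two things at $(x^*,t^*)$: this expression must be $C^1$ in $(x,t)$, and $\partial_t^2\phi(Y_t(x))<0$ there. In other words you need $\phi\in C^2$ with a nondegenerate time-maximum. Neither property survives $C^1$-small perturbations, which change $\partial_t\phi(Y_t(x))$ only in $C^0$.

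Here is a concrete instance. Take a smooth $\phi$ whose maximum on $K_Y$ is unique and nondegenerate at $p^*=Y_{t^*}(x^*)$, with $0<t^*<\tau_Y(x^*)$. Near $p^*$, add two bumps of height $\epsilon\asymp\delta^2$ and width $\asymp\delta$, centred at times $t^*\pm\delta$ and tilted in opposite transverse directions. Choose $\epsilon$ large enough to beat the quadratic drop of $t\mapsto\phi(Y_t(x))$ at distance $\delta$ from $t^*$.

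This perturbation has $C^1$-norm $O(\delta)$. Yet for $x$ near $x^*$ the maximum over the return window is attained near one of the two bumps. Hence $\widehat\phi=\max(A_+,A_-)$ with $A_\pm$ of class $C^1$. The tilt can be tuned so that $A_+(x^*)=A_-(x^*)$ while $DA_+(x^*)\neq DA_-(x^*)$. Then $\widehat\phi$ has a corner at $x^*\in\widehat K_Y$, which is exactly where you need it to be $C^1$. Danskin-type arguments give differentiability only where the maximizing time is unique, and uniqueness is not $C^1$-robust.

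\textbf{Second gap: the structure of $\cH$.} You rely on a description of $\cH(\cR_Y,\widehat K_Y)$ as ``unique maximum on a Markov piece plus transversality''. That is not part of Theorem~\ref{thm:MR17} as quoted, which only provides an open dense subset of $C^1$ functions. Moreover, the definition of $\cH_Y$ requires $\widehat\phi|_{V'}$ to be $C^1$ on a neighbourhood of all of $\widehat K_Y$, which you never establish. Shrinking $V'$ to the piece carrying the maximum changes the statement.

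To make your route work you would have to do two things. First, reopen the proof in \cite{MR17} and check that it only uses continuity of the observable on the horseshoe and $C^1$ regularity near the relevant maximum points. Second, either work with $C^2$ observables or find a $C^1$-robust replacement for the implicit-function step.

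\textbf{Comparison with the paper's route.} The paper argues differently. Following \cite[Section~4]{MR23}, it asserts that $\widehat\phi$ is $C^1$ near $\widehat K_Y$ for $\phi$ in an open dense set $\mathcal G_Y\subset C^1(U,\R)$, and that $\phi\mapsto\widehat\phi$ is a $C^1$ submersion; the open dense set $\cH$ then pulls back. Your diagnosis of the obstacle is sound, however. The double-bump perturbation above shows that observables for which $\widehat\phi$ has a corner on $\widehat K_Y$ are $C^1$-dense, so such a $\mathcal G_Y$ cannot be $C^1$-open. Consequently, (iii) with $\cH_Y$ defined through membership in a subset of $C^1(V',\R)$ needs to be reformulated, not merely proved more carefully.
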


\begin{proof}
It is easy to see that (i) holds, 
since $V'$ is uniformly away from the discontinuity set of $\cR_Y$,
the return time $\tau_Y$ is continuous and bounded on $V'$.

To obtain (ii), write $x=Y_s(x_0)$ with $x_0\in\widehat K_Y$ and
$0\le s\le\tau_Y(x_0)$. Since a time shift does not affect either
the $\limsup$ or the supremum along an orbit, it is enough to consider
$x_0$.  The forward $Y$-orbit of $x_0$ is the
disjoint union of the segments $$\{Y_t(\cR_Y^{\,n}x_0):0\le
t\le\tau_Y(\cR_Y^{\,n}x_0), n\ge0\}.$$ 
By definition of $\widehat\varphi$, the maximum of $\varphi$ along the
$n$-th segment is
$\widehat\varphi\bigl(\cR_Y^n(x_0)\bigr).$ 
Since the return time is bounded away from zero and from infinity on
$\widehat K_Y$, the successive return times tend to infinity.

Therefore,
$$
\limsup_{t\to+\infty}\varphi(Y_t(x_0))
=
\limsup_{n\to+\infty}
\widehat\varphi\bigl(\cR_Y^n(x_0)\bigr).
$$
Since $\cR_Y|_{\widehat K_Y}$ is invertible, the same argument applied
to the full orbit gives
$$
\sup_{t\in\mathbb R}\varphi(Y_t(x_0))
=
\sup_{n\in\mathbb Z}
\widehat\varphi\bigl(\cR_Y^n(x_0)\bigr).
$$
This proves item (ii).

Finally, let us prove (iii). Following the argument in
\cite[Section~4]{MR23}, there exists an open and dense subset
$\mathcal G_Y\subset C^1(U,\mathbb R)$ such that, for every
$\phi\in\mathcal G_Y$, the function
$$\widehat\phi(x) = \max_{0\leq t\leq\tau_Y(x)} \phi(Y_t(x))
$$ 
is of class $C^1$ on a neighbourhood of $\widehat K_Y$. Moreover, the
map
$$
\mathcal T_Y:\mathcal G_Y\longrightarrow C^1(V',\mathbb R),
\qquad
\mathcal T_Y(\phi)=\widehat\phi,
$$
is a $C^1$ submersion, hence $\mathcal T_Y$ is locally open. Since
$\mathcal H(\cR_Y,\widehat K_Y)$ is open and dense in
$C^1(V',\mathbb R)$, it follows that
$$ 
\mathcal H_Y
:=
\{
\phi\in\mathcal G_Y:
\widehat\phi\in
\mathcal H(\cR_Y,\widehat K_Y)
\}
$$ 
is open and dense in $\mathcal G_Y$, and hence in
$C^1(U,\mathbb R)$. This finishes the proof of item (iii).
\color{black}
\end{proof}

The last ingredient we need is the following simple monotonicity fact.

\begin{lemma}\label{lem:spectra-elementary}
Let $(Y_t)_t$ be a flow, $\phi$ be a continuous observable, and let
$\Delta\subset\Delta'$ be compact $(Y_t)_t$-invariant sets. Then
$$
\Lspec\big((Y_t)_t,\Delta,\phi\big)\subseteq\Lspec\big((Y_t)_t,\Delta',\phi\big),
\qquad
\Mspec\big((Y_t)_t,\Delta,\phi\big)\subseteq\Mspec\big((Y_t)_t,\Delta',\phi\big),
$$
$$ 
\Lspec\big((Y_t)_t,\Delta,\phi\big)\subseteq
\Mspec\big((Y_t)_t,\Delta,\phi\big).$$
\end{lemma}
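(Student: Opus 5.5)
The three inclusions follow directly from the definitions of the spectra as sets of values indexed by points of the invariant set. The plan is to unwind those definitions and compare pointwise.

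For the first two inclusions, take an element $\ell\in\Lspec\big((Y_t)_t,\Delta,\phi\big)$. By definition there is a point $x\in\Delta$ with $\ell=\limsup_{t\to+\infty}\phi(Y_t(x))$. Since $\Delta\subset\Delta'$, the same point $x$ lies in $\Delta'$. The quantity $\limsup_{t\to+\infty}\phi(Y_t(x))$ depends only on $x$, the flow and $\phi$, not on which ambient invariant set we regard $x$ as belonging to. Hence $\ell\in\Lspec\big((Y_t)_t,\Delta',\phi\big)$. The identical argument with $\sup_{t\in\R}\phi(Y_t(x))$ in place of the $\limsup$ gives the inclusion for $\Mspec$. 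Invariance and compactness are not really needed here; they only guarantee that the values are finite, since $\phi$ is continuous and hence bounded on the compact set $\Delta'$.

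For the third inclusion, let $\ell=\limsup_{t\to+\infty}\phi(Y_t(x))$ with $x\in\Delta$. We must produce a point $y\in\Delta$ with $\sup_{t\in\R}\phi(Y_t(y))=\ell$. This is the only step with content, and we follow the classical argument of Moreira--Roma\~na. Choose $t_n\to+\infty$ with $\phi(Y_{t_n}(x))\to\ell$. By compactness of $\Delta$, pass to a subsequence with $Y_{t_n}(x)\to y\in\Delta$.

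First, continuity gives $\phi(y)=\ell$, so $\sup_t\phi(Y_t(y))\ge\ell$.

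Second, for any fixed $s\in\R$ we have $Y_s(y)=\lim_n Y_{t_n+s}(x)$ by continuity of the flow map $Y_s$. Since $t_n+s\to+\infty$, we get
$$\phi(Y_s(y))=\lim_n\phi(Y_{t_n+s}(x))\le\limsup_{t\to+\infty}\phi(Y_t(x))=\ell.$$
Taking the supremum over $s$ yields $\sup_{s\in\R}\phi(Y_s(y))\le\ell$. Combining the two bounds gives equality, hence $\ell\in\Mspec\big((Y_t)_t,\Delta,\phi\big)$.

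The main point needing care is this second bound. It must hold for negative $s$ as well, which is fine because only $t_n+s\to+\infty$ is used, and invariance of $\Delta$ ensures $y$ and its whole orbit stay in $\Delta$.
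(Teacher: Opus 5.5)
Your proposal is correct and follows the paper's proof essentially step for step. The first two inclusions are immediate from the definitions. For the third, you choose $t_n\to+\infty$ with $Y_{t_n}(x)\to y$ and $\phi(Y_{t_n}(x))\to\ell$, and then bound $\phi(Y_s(y))\le\ell$ for every $s\in\R$; you are also slightly more explicit than the paper that this bound needs only $t_n+s\to+\infty$.
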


\begin{proof}
The first two inclusions are immediate, since the spectra are sets of
values indexed by the points of the invariant set. For the third, let
$$\ell=\limsup_{t\to+\infty}\phi(Y_t (x)), \text{ with } x\in\Delta,$$ and, by compactness of $\Delta$, choose
$t_n\to+\infty$ with $ \phi(Y_{t_n}(x))\to\ell $ and $Y_{t_n}(x)\to y.$  
By continuity of $\phi$ and $(Y_t)_t$-invariance of $\Delta$ it follows that  $\phi(y)=\ell$, and for every
$s\in\R$ one has $$\phi(Y_s (y))=\lim_{n\to\infty}\phi(Y_{t_n+s}(x))\le\ell,$$ because
$\limsup_{t\to+\infty}\phi(Y_t(x))=\ell.$  Hence
$\sup_{s}\phi(Y_s (y))=\ell$, as desired to prove.
\end{proof}


\subsection{Proof of Theorem \ref{cor:Markov-spectrum}}

The proof of the theorems builds over the relation between Lagrange and Markov spectra for continuous flows and Poincar\'e maps obtained in Subsection~\ref{sec:relation}.

Fix $\vep>0$. By Theorem~\ref{thm:dimension-flows}  there are a horseshoe $K\subset\Lambda$ with
$\dim_H(K)>2$, as well as a horseshoe $\widehat K=K\cap S$ for the Poincar\'e map $\cR$ satisfying
$\dim_H(\widehat K)>1$. Recall that $\widehat K$ is
disjoint from $W^{s}(\Sing X)\cup W^{u}(\Sing X)$, so we may choose an open set $ W\subset \overline W\subset V\subset S$ with $\widehat K\subset W$ and $\overline V$ compact and
disjoint from those sets. On $V$, the map $\cR$ is a $C^{2}$ diffeomorphism onto its image and $\tau$ is bounded away from zero and infinity.

 Apply Theorem~\ref{thm:MR17} to the local diffeomorphism
$\varphi=\cR|_{V}$ on the surface $N=S$
and to the horseshoe $\widehat K\Subset V$
(cf. Remark~\ref{rem:local-MR}).
It produces $\varphi_0$
arbitrarily $C^{2}$-close to $\cR|_{V}$ and a $C^{2}$-neighbourhood $\cW$ of
$\varphi_0$ with the stated genericity property.
Now, let $\rho>0$ be given by
Proposition~\ref{prop:realization} (applied with $\eta=\vep$).  Choosing  $\varphi_0$ with
$\|\varphi_0-\cR|_{V}\|_{C^{2}}<\rho$, we obtain an open $V'\supset W$ and a vector field $Y\in\mathfrak X^{2}(M)$ with
$\|Y-X\|_{C^{2}}<\vep$ and $\cR_{Y}|_{V'}=\varphi_0|_{V'}$.
In particular $\cR_{Y}|_{V'}\in\cW$.  
The map $\widetilde Y\mapsto\cR_{\widetilde Y}|_{V'}$ is continuous in the $C^{2}$ topology on a
neighbourhood of $Y$ in $\mathfrak X^{2}(M)$, because $\overline{V'}$
is at a positive distance to the singular set and $\tau$ is bounded.
Hence there is an open $\cU\subset\mathfrak X^{2}(M)$ with
$Y\in\cU$ such that $\cR_{\widetilde Y}|_{V'}\in\cW$ for every $\widetilde Y\in\cU$.

 Fix $\widetilde Y\in\cU$. Since
$\cR_{\widetilde Y}|_{V'}\in\cW$, Theorem~\ref{thm:MR17} provides the continuation
$\widehat K_{\widetilde Y}\subset V'$ and the open dense set
$\cH(\cR_{\widetilde Y},\widehat K_{\widetilde Y})\subset C^{1}(V',\R)$ such that
\begin{equation}\label{eq:spectra}
\operatorname{int}\Lspec\big(\cR_{\widetilde Y},\widehat\phi,\widehat K_{\widetilde Y}\big)\neq\emptyset
\end{equation}
for every $\widehat\phi\in\cH(\cR_{\widetilde Y},\widehat K_{\widetilde Y})$. Let $\cH_Y$ be as in
Lemma~\ref{lem:induced-observable}(iii), an open and dense subset of
$C^{1}(U,\R)$, and let $\phi\in\cH_{\widetilde Y}$. By
Lemma~\ref{lem:induced-observable}(ii) and \eqref{eq:spectra},
$$
\operatorname{int}\Lspec\big(({\widetilde Y}_t)_t,K_{\widetilde Y},\phi\big)
=\operatorname{int}\Lspec\big(\cR_{\widetilde Y},\widehat\phi,\widehat K_{\widetilde Y}\big)\neq\emptyset .
$$
By \eqref{eq:KYinLY} and Lemma~\ref{lem:spectra-elementary},
$$
\Lspec\big(({\widetilde Y}_t)_t,K_{\widetilde Y},\phi\big)\subset\Lspec\big(({\widetilde Y}_t)_t,\Lambda_{\widetilde Y},\phi\big)
\subset\Mspec\big(({\widetilde Y}_t)_t,\Lambda_{\widetilde Y},\phi\big),
$$
so both $\Lspec(({\widetilde Y}_t)_t,\Lambda_{\widetilde Y},\phi)$ and
$\Mspec(({\widetilde Y}_t)_t,\Lambda_{\widetilde Y},\phi)$ have non-empty interior. This finishes the proof of Theorem~\ref{cor:Markov-spectrum}.
\hfill $\square$

\begin{remark}\label{rem:robust}
The conclusion is robust although the hypothesis need not be: we do not
claim that $\Lambda_Y$ is still modeled by a full-branch expanding
Gibbs--Markov map, and this is not needed, since what persists under
perturbation is the horseshoe $\widehat K_Y$ and not the modeling structure.
The Gibbs--Markov hypothesis is used only once, in Step~1, to produce
$\widehat K$.
\end{remark}

\section{Intermediate dimension properties}\label{sec:interdiate}

In this section we investigate the flexibility of the Hausdorff dimension spectrum and prove Theorems~\ref{thm:intermediate-metric-dimensions} and~\ref{thm:intermediate-dimensions}. 
Throughout, we assume that the vector field $X$ and the attracting set $\Lambda$ satisfy the hypotheses of these theorems.

\subsection{Proof of Theorem \ref{thm:intermediate-metric-dimensions}}
The proof of the theorem explores the techniques developed in \cite{DHT22} for diffeomorphisms on closed manifolds, adapted here to the analysis of Poincaré return maps. 
\color{black}
Fix $c\in [1,\gamma)$. 
 If $c=1$, choose a periodic orbit $\mathcal O$ contained in a
horseshoe $K\subset\Lambda$, whose existence follows from
Theorem~\ref{thm:dimension-flows}. The invariant probability measure $\mu_{\mathcal O}$
supported on $\mathcal O$ satisfies
$\dim_H(\mu_{\mathcal O})=1.$ 
 \smallskip

 Now assume that $c\in (1,\gamma)$. Let $\mathcal R:S\to S$ be a  global Poincar\'e map for $\Lambda$. 
  If $K\subset \Lambda$ is a hyperbolic horseshoe, the set $\widehat K=K\cap S$ is a hyperbolic horseshoe for $\mathcal R$. 
  Moreover, if $\tau:\widehat K\to (0,\infty)$ denotes the Poincar\'e first return time, the restriction of the flow to $K$ is conjugate to the suspension flow of $\mathcal R:\widehat K\to \widehat K$ with roof function $\tau$.
  
  Furthermore, there is a one-to-one map   $\Xi:\Msr{\mathcal R}{\widehat K}\to \Msr{X}{K}$  which maps ergodic measures into ergodic measures and in local coordinates corresponds to $$\widehat\mu \mapsto \frac{\widehat\mu \times \Leb}{\int \tau \, d\widehat \mu}$$
and, 
in particular, 
$\dim_H(\Xi(\hat \mu))=\dim_H(\hat\mu)+1$
(see e.g. ~\cite{BaBook,BR75} for more details).

\smallskip
  Now observe that, 
  since $\widehat K$ is a compact invariant subset of the domain of $\mathcal R$  uniformly away from the discontinuity set of $\mathcal R$, there exists an open neighborhood $V$ of $\widehat K$ such that $\mathcal R|_{V}:V\to \mathcal R(V)$ is a $C^1$-diffeomorphism.  In particular, $\widehat K$ may be regarded as a hyperbolic horseshoe for the diffeomorphism $\mathcal R|_{V}$,  hence the results of  \cite{DHT22} apply. 
  
Now, note that 
$$
\widehat\gamma:=\sup\Big\{
\dim_H(\widehat\mu): \widehat\mu\in \Msr{\mathcal R}{\widehat K}
\Big\}=\gamma-1.
$$

 Observe that  $c-1\in [0, \gamma-1)$ and there is a hyperbolic horseshoe $\widehat K$ for $\cR$ such that $c-1\in (0, \widehat\gamma)$, 
Since $\widehat K$ is a locally maximal  hyperbolic set for $\cR$, it is transitive, expansive and has the shadowing property. In addition, it is conformal due to the dimension of $V$. 
Because ${\widehat K}$ is average conformal and $\cR$ is $C^{1+\alpha}$, the Young formula gives, for every $\widehat \mu \in \cM_{\cR}^e({\widehat K})$,
\begin{equation}\label{eq:flex-young}
\dim_H \widehat \mu = \frac{h_{\widehat \mu}({\cR})}{\chi_u({\widehat \mu})} - \frac{h_{\widehat \mu}({\cR})}{\chi_s({\widehat \mu})},
\end{equation}
where $\chi_u({\widehat \mu}) > 0 > \chi_s({\widehat \mu})$ are the Lyapunov exponents of ${\widehat \mu}$.
Define the potentials  on ${\widehat K}$ by
$$
\psi^u(x) := \log \|D{\cR}|_{E^u_x}\|,
\qquad \psi^s(x) := \log \|D{\cR}|_{E^s_x}\|.
$$
By Birkhoff's ergodic theorem,
$$
\int_{{\widehat K}} \psi^u \, d{\widehat \mu} = \chi_u({\widehat \mu}), \qquad 
\int_{{\widehat K}} \psi^s \, d{\widehat \mu} = \chi_s({\widehat \mu}).
$$
Consider the map
$$
\mathcal{T}_{\psi^u,\psi^s,{\cR}} : \cM_{\cR}({\widehat K}) \to \mathbb{R}^3, \qquad
{\widehat \mu} \mapsto \left( \int \psi^u \, d{\widehat \mu},\; \int \psi^s \, d{\widehat \mu},\; h_{\widehat \mu}({\cR}) \right).
$$
and the function
$$
Q(x,y,z) := \frac{z}{x} - \frac{z}{y}.
$$
Notice that is continuous on the domain $\{x>0,\; y<0\}$, which in turn contains the image of $\mathcal{T}_{\psi^u,\psi^s,{\cR}}$.
Using \eqref{eq:flex-young},
$$
\dim_H {\widehat \mu} = Q\!\left( \int \psi^u \, d{\widehat \mu},\; \int \psi^s \, d{\widehat \mu},\; h_{\widehat \mu}({\cR}) \right).
$$
By \cite[Lemma~7.1(c)]{DHT22}, applied to the transitive topologically
Anosov system $({\widehat K},{\cR}|_{\widehat K})$ and to the continuous
function $Q$, the set
$$
Q\bigl(\mathcal{T}_{\psi^u,\psi^s,\cR}
(\cM_{\cR}^e(\widehat K))\bigr)
=
\Big\{\dim_H\widehat\mu:
\widehat\mu\in\cM_{\cR}^e(\widehat K)\Big\}
$$
is an interval. Moreover, this interval contains $0$, since any
periodic orbit contained in $\widehat K$ supports an ergodic invariant
measure of Hausdorff dimension zero.
Hence, for every $\nu\in\cM_{\cR}^e(\widehat K)$, the interval above
contains the entire segment
$[0,\dim_H\nu].$ 
Taking the supremum over all such measures, we obtain
$$
[0,\hat\gamma)
\subset
\Big\{\dim_H\widehat\mu:
\widehat\mu\in\cM_{\cR}^e(\widehat K)\Big\}.
$$
Since $c-1<\hat\gamma$, there exists
$\widehat\mu\in\cM_{\cR}^e(\widehat K)$ such that
$\dim_H(\widehat\mu)=c-1.$ 
Setting $\mu_c=\Xi(\widehat\mu)$, we finally obtain
$\dim_H(\mu_c)=
\dim_H(\widehat\mu)+1
=
c,
$ 
which completes the proof.
\hfill $\square$

\subsection{Proof of Theorem \ref{thm:intermediate-dimensions}}

As explained in the introduction, the proof of Theorem \ref{thm:intermediate-dimensions} is substantially more delicate than the one of Theorem~\ref{thm:intermediate-metric-dimensions}, and  
the measures produced in Theorem~\ref{thm:intermediate-metric-dimensions} are of no help here: in the absence of a variational principle for the Hausdorff dimension, the dimension of a measure gives no control over the dimension of its support (cf. \cite{MM83}). 
Our strategy is therefore to 
adapt methods from \cite{AOR25}, which allows one to control simultaneously the entropy and the geometric potentials of certain Poincar\'e maps, in order to construct subsystems enjoying a stronger property, namely strict ergodicity, for which the two quantities agree. 

\smallskip
Let us prove the statement in case of  the unstable dimensions (the proof for the stable dimensions is analogous). Fix $c\in [1,\gamma^u)$. By definition of $\gamma^u$, there exists a hyperbolic horseshoe $K\subset \Lambda$ such that $$1\leq c< \dimu(K).$$ 

Let $\mathcal R:S\to S$ be a  global Poincar\'e map for $\Lambda$ and recall that, since $K$ is a hyperbolic horseshoe, the set $\widehat K=K\cap S$ is a hyperbolic horseshoe for $\mathcal R$.  Consequently, the unstable geometric potential $\psi^u_{\mathcal R}$ of the Poincar\'e map is
 naturally related to the unstable geometric potential $\psi^u_X$ of the flow by
$$
\psi_{\mathcal R}^u(x)
=
\int_0^{\tau(x)}
\psi_X^u(X_t(x))\,dt.
$$
Observe that,  up t considering an adapted metric , we may assume that $-\psi^u_{\mathcal R}$ is a strictly positive potential for $\mathcal R$. Since $K$ is the suspension flow of $\widehat K$, it follows that
$$\dimu(\widehat K)=\dimu(K)-1.$$

On the other hand, since $\widehat K$ is a  horseshoe, it is topologically conjugated to a subshift of finite type $\mathcal T:\Omega\to\Omega$, i.e., there is a homeomorphism $\pi:\Omega\to \widehat K$ such that $\mathcal R\circ \pi=\pi\circ \mathcal T$. Hence, one can define a strictly positive continuous potential $\widetilde\psi^{\,u}\in {C}^0(\Omega)$  by
$$\widetilde\psi^{\,u}(x)=-\psi^u_{\mathcal R}(\pi(x)), \text{ for every } x\in \Omega.$$ Observe that $$\int\widetilde\psi^{\,u}  \,d\pi^*\widehat\mu=\int -\psi^u_{\mathcal R}\,d\widehat\mu, \textrm{ for every } \widehat\mu\in \Msr{\mathcal R}{\widehat K},$$
where $\pi^*$ denotes the lift of measures through the coding $\pi$ and $\pi_*$ its push-forward.

\smallskip
Now, let $\widehat\mu\in \Msr{\mathcal R}{\widehat K}$ be any ergodic measure satisfying $$(c-1)< h^*=-\frac{h_{\widehat\mu}(\mathcal R)}{\int\psi^u_{\mathcal R}\,d\widehat\mu}=\frac{h_{\pi^*\widehat\mu}(\mathcal T)}{\int\widetilde\psi^{\,u} \,d\pi^*\widehat\mu}.$$
Such a measure can be chosen e.g. as an equilibrium state for the potential $\delta^u(\widehat K)\psi^u_{\mathcal R}$, as for this potential one has  $$h^*=\delta^u(\widehat K)=\dim^u_H(K)-1>c-1.$$

Let $\mathfrak{d}$ denote a metric compatible with the weak*-topology of $\Msr{\mathcal T}{\Omega}$. By \cite[Theorem 3.2]{AOR25},   there exist  a sequence $(\Omega_n)_{n\geq 0}$ of subshifts of finite type and two strictly decreasing sequences of positive numbers $(\varrho_i)_{i\geq 1}$ and $(\kappa_i)_{i\geq 1}$   such that, by denoting $\nu_0=\pi^*\widehat\mu$ and $\nu_n$ the measure of maximal entropy of $\Omega_n$, it holds:

\begin{enumerate}[(i)]
    \item $\varrho_n,\kappa_n\to0$, $\Omega_0=\Omega$ and $\Omega_n\supset \Omega_{n+1}$, for every $n\geq 0$.
    \item For every $n\geq 0$ and for every measure $\zeta\in \Msr{\mathcal T}{\Omega_n}$, we have  $\mathfrak{d}(\zeta,\nu_n)\leq 2\kappa_n$,
\begin{equation}
(1-\varrho_{n+1})\int \widetilde\psi^{\,u} \,d\nu_n < \int \widetilde\psi^{\,u} \,d\zeta < (1+\varrho_{n+1})\int \widetilde\psi^{\,u} \,d\nu_n.
\end{equation} and
\begin{equation}
(1+\varrho_{n+1})^2 (c-1)\int \widetilde\psi^{\,u} \,d\nu_n \leq h_{\topp}(\Omega_{n+1})
\leq
(1+3\varrho_{n+1})(c-1)\int \widetilde\psi^{\,u} \,d\nu_n.
\end{equation}

\item there exists an increasing sequence of non-negative integers $(m_n)_n$ such that $\mathcal{L}_{m_n}(\Omega_i)=\mathcal{L}_{m_n}(\Omega_n)$ for every $i\geq n$
and for every word $w\in \mathcal{L}_{m_n}(\Omega_n)$ and every $v\in \mathcal{L}_{m_j}(\Omega_n)$ with $j<n$, $v$ is a subword of $w$.
\end{enumerate}

\smallskip
Consider the set $$\Omega_c=\bigcap_{i\geq 0} \Omega_i$$

\begin{claim}
    \emph{The set $\Omega_c$ is strictly ergodic and satisfies $$c-1=\frac{h_{\nu_c}(\mathcal T)}{\int\widetilde\psi^{\,u} \,d\nu_c},  $$
where $\nu_c$ is the unique ergodic measure of $\Omega_c$.}
\end{claim}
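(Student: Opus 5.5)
The plan is to read off everything from properties (i)--(iii) of the sequence $(\Omega_n)_n$ given by \cite[Theorem 3.2]{AOR25}, using only compactness of subshifts and the variational principle. First, $\Omega_c=\bigcap_{i\ge0}\Omega_i$ is a decreasing intersection of non-empty compact $\mathcal T$-invariant sets, so it is a non-empty subshift.

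The basic tool is a description of its language. For each fixed $m$, the finite sets $\cL_m(\Omega_i)$ decrease in $i$, so they stabilize. By compactness, a word $w$ whose cylinder meets every $\Omega_i$ also meets $\Omega_c$: the sets $[w]\cap\Omega_i$ are nested non-empty compacts. Hence
$$\cL_m(\Omega_c)=\bigcap_{i}\cL_m(\Omega_i)=\cL_m(\Omega_i)\quad\text{for all } i\ge i(m).$$
With $m=m_n$, property (iii) gives $\cL_{m_n}(\Omega_c)=\cL_{m_n}(\Omega_n)$.

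\emph{Minimality.} Take any word $v\in\cL(\Omega_c)$. Since the $m_j$ increase, $v$ is a subword of some $v'\in\cL_{m_j}(\Omega_c)=\cL_{m_j}(\Omega_j)$. Choose $n>j$. By (iii), $v'$, and hence $v$, occurs inside every word of $\cL_{m_n}(\Omega_c)$. So every point of $\Omega_c$ contains $v$ in each of its windows of length $m_n$, which means $\Omega_c$ is uniformly recurrent and therefore minimal.

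\emph{Unique ergodicity.} Let $\zeta\in\Msr{\mathcal T}{\Omega_c}$. Then $\zeta\in\Msr{\mathcal T}{\Omega_n}$ for every $n$, so (ii) gives $\mathfrak d(\zeta,\nu_n)\le2\kappa_n\to0$. Thus every invariant measure on $\Omega_c$ equals $\lim_n\nu_n$ in the weak$^*$ topology. Hence $\Omega_c$ carries a unique invariant measure $\nu_c$, it is ergodic, and $\nu_n\to\nu_c$.

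\emph{The ratio.} Since $\widetilde\psi^{\,u}$ is continuous and $\nu_n\to\nu_c$, we get $\int\widetilde\psi^{\,u}\,d\nu_n\to\int\widetilde\psi^{\,u}\,d\nu_c$. This limit is strictly positive because $\widetilde\psi^{\,u}$ is strictly positive. The two-sided bound in (ii) with $\zeta=\nu_c$ and $\varrho_{n+1}\to0$ gives the same convergence independently.

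For the entropy, the variational principle and unique ergodicity give $h_{\nu_c}(\mathcal T)=h_{\topp}(\Omega_c)$. I need $h_{\topp}(\Omega_c)=\lim_n h_{\topp}(\Omega_n)$. For subshifts,
$$h_{\topp}(\Omega)=\inf_m\tfrac1m\log\#\cL_m(\Omega)$$
by subadditivity. Using the stabilization of the languages,
$$h_{\topp}(\Omega_c)=\inf_m\inf_i\tfrac1m\log\#\cL_m(\Omega_i)=\inf_i h_{\topp}(\Omega_i)=\lim_i h_{\topp}(\Omega_i),$$
the last equality because the sequence is non-increasing. Letting $n\to\infty$ in the sandwich
$$(1+\varrho_{n+1})^2(c-1)\int\widetilde\psi^{\,u}\,d\nu_n\le h_{\topp}(\Omega_{n+1})\le(1+3\varrho_{n+1})(c-1)\int\widetilde\psi^{\,u}\,d\nu_n$$
then yields $h_{\nu_c}(\mathcal T)=(c-1)\int\widetilde\psi^{\,u}\,d\nu_c$. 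Dividing by the positive integral gives the claim. The case $c=1$ is included, with zero entropy.

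The step I expect to need the most care is this passage of entropy to the limit. Topological entropy is only upper semicontinuous in general, so equality has to come from the explicit stabilization of the finite languages as above. A secondary point needing care is verifying the language identity $\cL_m(\Omega_c)=\bigcap_i\cL_m(\Omega_i)$, on which both minimality and the entropy computation rely.
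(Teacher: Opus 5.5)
Your proof is correct. For unique ergodicity it coincides with the paper's argument. For minimality the paper only says it is ``a direct consequence of (iii)''; your identity $\cL_{m_n}(\Omega_c)=\cL_{m_n}(\Omega_n)$ and the uniform-recurrence argument supply the missing details. To apply (iii) to $v'$ you need $v'\in\cL_{m_j}(\Omega_n)$, and this holds because $\cL_{m_j}(\Omega_n)=\cL_{m_j}(\Omega_j)$ for $n\ge j$.

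The genuine difference is in how the entropy is passed to the limit. The paper never computes $\lim_n h_{\topp}(\Omega_n)$ through languages:
\begin{itemize}
\item For the upper bound, it uses $\nu_c\in\Msr{\mathcal T}{\Omega_n}$. The variational principle then gives $h_{\nu_c}(\mathcal T)\le h_{\topp}(\Omega_n)$, which it combines with the lower integral bound in (ii) for $\zeta=\nu_c$.
\item For the lower bound, it uses that $\nu_n$ is the measure of maximal entropy of $\Omega_n$ and that $\mu\mapsto h_\mu(\mathcal T)$ is upper semicontinuous on the expansive shift. Then $\nu_n\to\nu_c$ yields $h_{\nu_c}(\mathcal T)\ge\limsup_n h_{\topp}(\Omega_n)$.
\end{itemize}
You instead prove $h_{\topp}(\Omega_c)=\lim_n h_{\topp}(\Omega_n)$ directly, from the stabilization of $\cL_m(\Omega_i)$ and Fekete's lemma. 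You then pass to the limit in the sandwich of (ii) using $\int\widetilde\psi^{\,u}\,d\nu_n\to\int\widetilde\psi^{\,u}\,d\nu_c$.

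Your route is elementary and self-contained. It does not need $\nu_n$ to maximize entropy, and it reuses the same language identity that gives minimality. The paper's route is shorter, given standard facts about expansive systems.

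One correction to your closing remark: equality does not \emph{have to} come from language stabilization. Upper semicontinuity, combined with the trivial monotone bound $h_{\topp}(\Omega_c)\le h_{\topp}(\Omega_n)$, already forces it. This works with either metric entropy along $\nu_n\to\nu_c$ or topological entropy along the decreasing sequence $\Omega_n\to\Omega_c$. Your Fekete argument is in effect a hands-on proof of that upper semicontinuity for decreasing sequences of subshifts.
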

\begin{proof}
Since the set $\Omega_c$ is the intersection of nested compact and invariant sets, then it is non-empty, compact and invariant. Moreover, we have  $\mathfrak{d}(\nu,\mu)<4\kappa_n$, for every $n\geq 0$ and any pair of invariant measures of $\mathcal T|_{\Omega_c}$. Hence, item (i) gives $\mathfrak{d}(\mu,\nu)=0$ and hence $\mu=\nu$, proving that $\Omega_c$ is uniquely ergodic. Moreover, the minimality of $\Omega_c$ is a direct consequence of  (iii), proving that it is strictly ergodic.

Finally, let $\nu_c$ be   the unique probability measure in $\Msr{\mathcal T}{\Omega_c}$. On the one hand,  $\nu_c\in \Msr{\mathcal T}{\Omega_n}$ for every $n$, and hence, condition (ii) gives:
$$
\frac{h_{\nu_c}(\mathcal T)}{\int \widetilde\psi^{\,u}  \,d\nu_c}\leq \frac{h_{\topp}(\Omega_{n})}{(1-\varrho_n)\int \widetilde\psi^{\,u}  \,d \nu_{n-1}}\leq (c-1)\frac{1+3\varrho_n}{1-\varrho_n}\longrightarrow c-1.
$$

On the other hand, we have $\nu_n\to \nu_c$. So,
$$
h_{\nu_c}(\mathcal T)=h_{\topp}(\Omega_c) {\geq}\limsup_{n\to\infty} h_{\nu_n}(\mathcal T)=
\limsup_{n\to\infty} h_{\topp}(\Omega_n)\geq \limsup_{n\to\infty} \; (c-1)\int \widetilde\psi^{\,u}  \,d\nu_n,
$$
because the entropy function is upper semi-continuous in $\Omega$.
Therefore for every $\eta>0$ and infinitely many large values of $n$ one has 
$$
h_{\nu_c}(\mathcal T) \geq (c-1)\int \widetilde\psi^{\,u}  \,d\nu_n-\eta.
$$
Using (ii) once more, it follows that 
$$
\frac{h_{\nu_c}(\mathcal T)}{\int \widetilde\psi^{\,u}  \,d\nu_c}\geq\lim\limits_{\eta\to 0}\lim\limits_{n\to \infty}\frac{c-1}{(1+\varrho_{n+1})}-\frac{\eta}{\int \widetilde\psi^{\,u}  \,d\nu_c
}=c-1,
$$
thereby proving the claim.
\end{proof}

Let us proceed with the proof of the theorem.
Define $\widehat\mu_c=\pi_*\nu_c$. Since $\Omega_c$ is strictly ergodic and $\pi$ is a conjugacy, then $\widehat K_c=\pi(\Omega_c)$ is also a strictly ergodic subsystem of $\widehat K$.  Moreover, we have
$$c-1=\frac{h_{\topp}(\mathcal T|_{\Omega_c})}{\int\widetilde\psi^{\,u} \,d\nu_c}= \frac{h_{\nu_c}(\mathcal T)}{\int\widetilde\psi^{\,u} \,d\nu_c}=-\frac{h_{\widehat\mu_c}(\mathcal R)}{\int\psi^u_{\mathcal R}\,d\widehat\mu_c}.$$
On one hand, the variational principle combined with the Ledrappier--Young formula for unstable Hausdorff dimension gives
\begin{equation}\label{eq:flex1}
c-1=\dimu(\widehat\mu_c)=-\frac{h_{\widehat\mu_c}(\mathcal R)}{\int\psi^u_{\mathcal R}\,d\widehat\mu_c}=-\frac{h_{\topp}(\mathcal R|_{\widehat K_c})}{\int\psi^u_{\mathcal R}\,d\widehat\mu_c},
    \end{equation}
because  $\widehat\mu_c$ is the unique invariant measure on $\widehat K_c$. On the other hand,  Bowen's formula for the Hausdorff dimension of the unstable projection of an invariant set gives
\begin{equation}\label{eq:flex2}
    \dimu(\widehat K_c) = -\frac{h_{\topp}(\pi^u(\widehat K_c))}{\int\psi^u_{\mathcal R} \,d\widehat\mu_c}.
\end{equation}
However, one has
\begin{equation}\label{eq:flex3}
h_{\topp}(\pi^u(\widehat K_c)) = h_{\topp}(\widehat K_c) = h_{\widehat\mu_c}(\mathcal R),
\end{equation}
because $\pi^u$ preserves topological entropy. Comparing \eqref{eq:flex1}, \eqref{eq:flex2} and \eqref{eq:flex3}, we obtain
$$
\dimu(\widehat K_c) = \dimu(\widehat\mu_c).
$$
Finally, by considering the suspension $K_c$ of the map $\mathcal R:\widehat K_c\to \widehat K_c$ with roof function $\tau$, we get that $$\dimu(K_c)= \dimu(\widehat K_c)+1=c$$
To complete the proof observe that the minimality of $K_c$ is a direct consequence of the minimality of $\widehat K_c$, while its unique ergodicity comes from the unique  ergodicity of $\widehat K_c$ combined with the bijective property of of the map $\Xi$.
\hfill $\square$

\section{Applications to non-uniformly hyperbolic flows}\label{sec:applications}

In this section we discuss several classes of three-dimensional
partially hyperbolic attractors for which the hypotheses of the main results are satisfied. Consequently, for each class considered
below, one obtains:

\begin{itemize}
    \item[(1)] the existence of hyperbolic horseshoes of Hausdorff dimension
    strictly larger than $2$;
    \item[(2)] robust nonempty interior for the corresponding dynamical
    Lagrange and Markov spectra, for a $C^1$-open and dense set of
    observables, under the $C^2$ regularity assumption required in
    Theorem~\ref{cor:Markov-spectrum};
    \item[(3)] the realization of all intermediate Hausdorff dimensions
    of ergodic invariant probability measures in the range prescribed
    by Theorem~\ref{thm:intermediate-metric-dimensions};
    \item[(4)] the realization of all intermediate stable and unstable
    Hausdorff dimensions by strictly ergodic hyperbolic subsets, in
    the ranges prescribed by Theorem~\ref{thm:intermediate-dimensions}.
\end{itemize}
For each of the classes of three-dimensional partially hyperbolic attractors one needs to check: (i)
existence of a full-branch uniformly expanding
    Gibbs--Markov induced map for the quotient dynamics;
(ii) the hyperbolic and symbolic structures required in
    Theorems~\ref{thm:intermediate-metric-dimensions} and ~\ref{thm:intermediate-dimensions};
and (iii) the additional $C^2$ regularity required in Theorem~\ref{cor:Markov-spectrum}.

\subsection{Singular-hyperbolic sets and Lorenz attractors}

The geometric Lorenz attractor was introduced, independently, in  \cite{ABS} and \cite{G}  as a robust geometric model for Lorenz's polynomial system
\begin{displaymath}
\left\lbrace \begin{array}{lll}
\dot{x}&=& \sigma(y-x) \\
\dot{y}&=& \rho x-y-xz \\
\dot{z}&=& xy-\beta z,
\end{array}\right.
\end{displaymath}
where $\sigma\approx 10$, $\beta\approx 8/3$ and $\rho\approx28$.

This model presents very rich dynamics and shares several chaotic properties, but it fails to be hyperbolic due to the existence of a hyperbolic singularity accumulated by regular orbits within the attractor.  A remarkable structural property of the geometric Lorenz attractors is that its singularity has three real eigenvalues $\lambda_{ss}\leq\lambda_s<0<\lambda_u$ satisfying $\lambda_u+\lambda_s>0$. Moreover, the geometric Lorenz attractor admits a global cross-section $S$ and a Poincar\'e return map $\mathcal R:S\to S$ with a one-dimensional contracting foliation (see Figure~\ref{fig:LORENZ} below for a geometric representation). After taking the quotient along the  stable foliation, one obtains a one-dimensional map
\begin{equation}
    \label{eq:f1im}
    f:I\setminus\{0\}\to I, \qquad I=[-1,1],
\end{equation}
with a discontinuity at $0$.
The geometric Lorenz attractor is one most paradigmatic example of singular-hyperbolic sets, which we now define.

\begin{figure}[h]
    \centering
    \begin{minipage}[t]{0.48\textwidth}
        \centering
        \includegraphics[width=\textwidth]{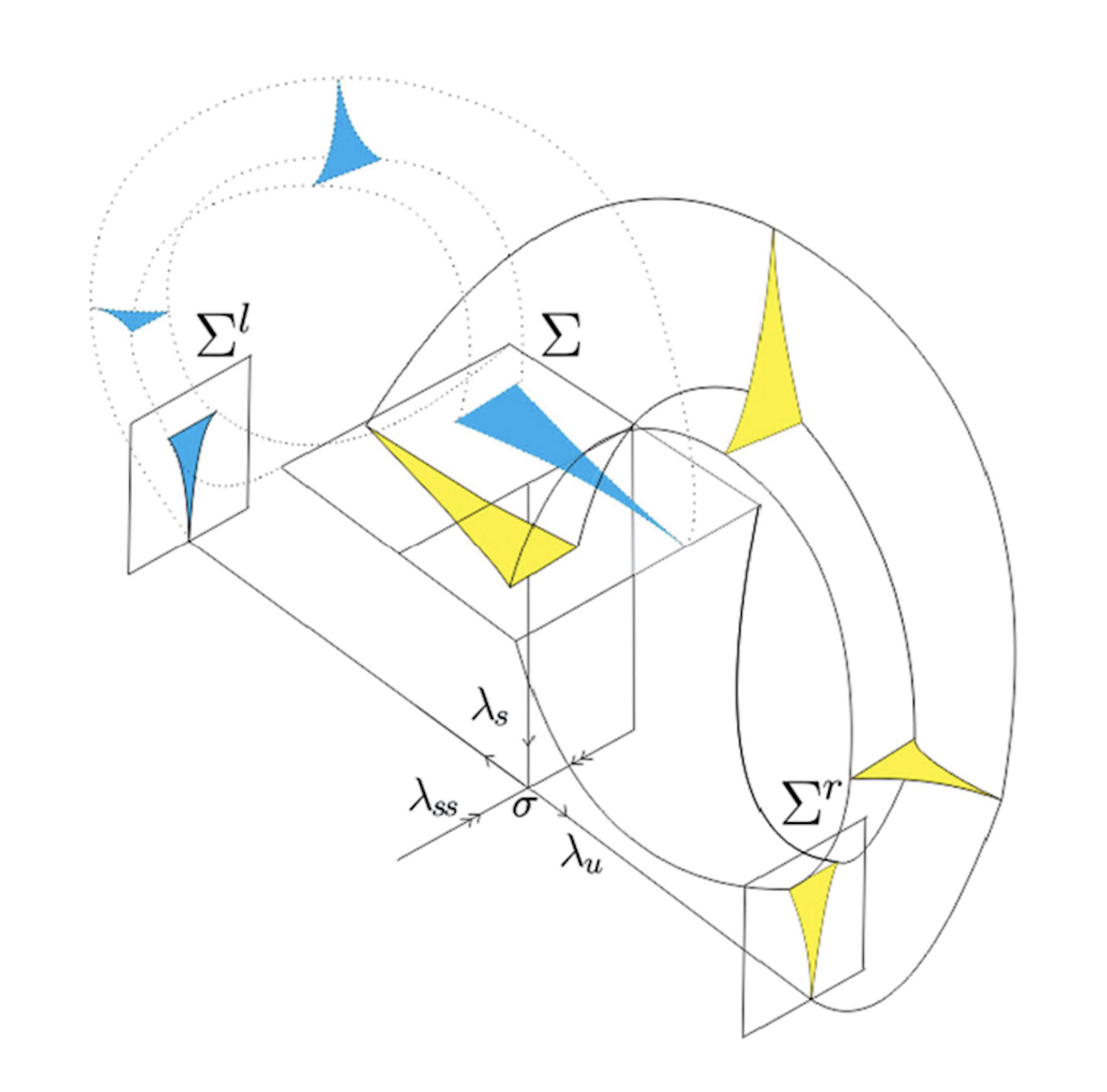}
        \caption{The Geometric Lorenz Attractor:
$$
\lambda_{ss}<\lambda_s<0<\lambda_u
$$
$$
\lambda_s+\lambda_u>0
$$}
        \label{fig:LORENZ}
    \end{minipage}
    \hfill
    \begin{minipage}[t]{0.48\textwidth}
        \centering
        \includegraphics[width=\textwidth]{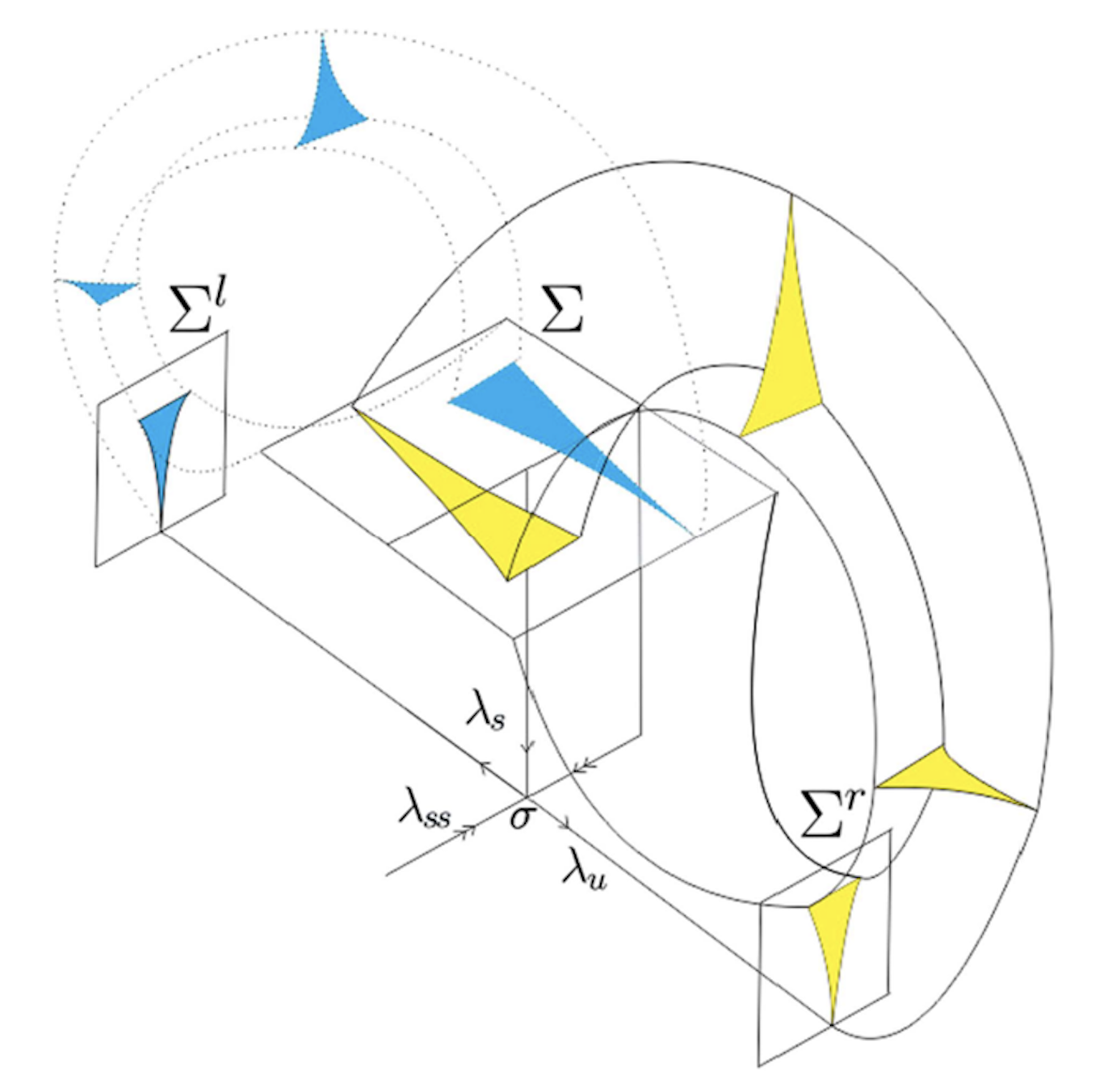}
        \caption{The Geometric Rovella's Attractor:
$$
\lambda_{ss}<\lambda_s<0<\lambda_u
$$
$$
\lambda_s+\lambda_u<0
$$}
        \label{fig:rovella2}
    \end{minipage}
\end{figure}

\begin{definition}
 A compact and invariant set $\Lambda\subset M$ is \emph{singular-hyperbolic} if it is partially hyperbolic with splitting $T_\Lambda M = E^s \oplus E^{cu}$ and the center-unstable bundle $E^{cu}$ is volume expanding; that is, there exist constants $C>0$ and $\lambda>0$ such that
    $$
    \left|\det D X_t|_{E^{cu}(x)}\right| \ge C e^{\lambda t}, \quad \forall x\in \Lambda,\ \forall t\ge 0.
    $$
\end{definition}

It is well known that attracting singular-hyperbolic sets admits a finite union $S$ of local cross-sections which intersects all orbits apart from the singularities, and a Poincar\'e map $\mathcal R:S\to S$ with a contracting foliation. 
So, one can quotient $\mathcal R$ along the stable foliation to obtain a
one-dimensional piecewise $C^{1+\alpha}$ expanding map
$ 
f:I\to I,
$ 
see \cite[Section~4.2.1]{AP10}. These maps admit an absolutely continuous invariant SRB measure \cite{APPV09} and, in view of \cite{ALP}, can be modeled by a full-branch Gibbs--Markov induced map. 
 Another important feature of singular-hyperbolic sets is their robustness: if $\Lambda$ is a  singular-hyperbolic attracting set, then there exist a neighborhood $U$ of $\Lambda$ and an open $C^1$-neighborhood $\mathcal{U}$ of $X$ such that for every $Y\in \mathcal{U}$, the maximal invariant set of $U$ with respect to $Y$ is also a singular-hyperbolic attracting set. In particular, the Poincar\'e map $\mathcal R$ of $\Lambda$ admits a continuation $\mathcal R_Y$ for every vector field sufficiently close to $X$. Under the light of these observations, our first application is as follows.

\begin{theorem}\label{thm:app-sing-hyp}
Let $X$ be a \color{blue} $C^{2}$ \color{black} vector field on $M$. If $\Lambda$ is a singular-hyperbolic attracting set, then $(X,\Lambda)$ satisfies the requirements of Theorems~\ref{thm:dimension-flows}-\ref{thm:intermediate-dimensions}.
\end{theorem}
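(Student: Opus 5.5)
The plan is to verify, one at a time, the structural hypotheses of Theorems~\ref{thm:dimension-flows}--\ref{thm:intermediate-dimensions} for a singular-hyperbolic attracting set $\Lambda$ of a $C^2$ field. In this setting those hypotheses amount to three things. First, $\Lambda$ is partially hyperbolic with a $C^1$ stable foliation. Second, there is a global cross-section $S$ whose Poincar\'e map $\mathcal R$ has a one-dimensional quotient $f$. Third, $f$ admits a full-branch expanding Gibbs--Markov induced map; for Theorem~\ref{cor:Markov-spectrum} we also need hyperbolic singularities and $C^2$ regularity.

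\textbf{Stable foliation and cross-section.} Partial hyperbolicity $E^s\oplus E^{cu}$ is part of the definition of singular hyperbolicity. For the foliation, volume expansion of $E^{cu}$ together with the domination gives a bunching-type inequality. We will verify the condition displayed in Section~\ref{sec:prelim}, after passing to an adapted metric and a large time $t_0$. This is the condition under which the stable foliation $\mathcal F^s$ is $C^1$, and in fact $C^{1+\alpha}$ for a $C^2$ field, as in \cite{AraButVar,AM17}. Every singularity in $\Lambda$ is hyperbolic, since singular hyperbolicity forces a Lorenz-like eigenvalue configuration. The construction of \cite{AP10,AM19} then provides a finite union of cross-sections $S$ that meets every orbit off $W^{s}(\Sing(X))\cup W^{u}(\Sing(X))$. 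Because $\mathcal R$ preserves $\mathcal F^s|_S$, it descends to the piecewise $C^{1+\alpha}$ quotient map $f:I\to I$. By the volume-expansion estimate on the return map, \cite[Section~4.2.1]{AP10}, $f$ is uniformly expanding on each branch, with $|f'|$ bounded below by some $\lambda>1$ after an iterate.

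\textbf{Inducing.} This is the step I expect to be the main obstacle. We will take the inducing scheme of \cite{ALP}, using the absolutely continuous invariant measure of \cite{APPV09}. Fix an interval $J$ in the support of the a.c.i.m., and let $F=f^R$ be the first return to $J$ restricted to branches that cover $J$ fully. The delicate point is that near the discontinuities $f'$ may be unbounded or degenerate, as at Lorenz-like singularities where $|f'(x)|\sim|x|^{\beta-1}$. We need distortion bounds for $\log|f'|$ that are H\"older up to the singular set, combined with uniform expansion, in order to get bounded distortion of $F^n$ on cylinders. Koebe-type or summable-distortion estimates along orbits that stay away from $J$ until the return should handle this.

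\textbf{Gibbs--Markov properties and conclusion.} Once distortion is controlled, we check the Gibbs--Markov properties: properties (1) and (4) hold by the full-branch construction, (2) holds with $d$ the Euclidean metric because $f$ is expanding, and (3) follows from bounded distortion. The construction also covers $J$ up to Lebesgue measure zero, by ergodicity of the a.c.i.m. With these in hand, Definition~\ref{def:modeled} holds, so Theorem~\ref{thm:dimension-flows} applies, and Theorem~\ref{cor:Markov-spectrum} applies as well because the field is $C^2$ and the singularities are hyperbolic. For Theorems~\ref{thm:intermediate-metric-dimensions} and~\ref{thm:intermediate-dimensions} we also need $\Lambda$ to be an attractor, so in that case we assume transitivity. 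The needed suspension structure is the fact, recalled in Section~\ref{sec:prelim}, that the flow off $W^s(\Sing(X))\cup W^u(\Sing(X))$ is the suspension of $\mathcal R$ by the return time $\tau$.
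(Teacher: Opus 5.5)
Your architecture matches the paper's. The paper gives no separate argument for this theorem. It takes the cross-section, the contracting foliation and the piecewise $C^{1+\alpha}$ expanding quotient from \cite[Section~4.2.1]{AP10}, the absolutely continuous invariant measure from \cite{APPV09}, and the full-branch Gibbs--Markov induced map from \cite{ALP}. However, at the two points where you replace a citation by your own justification, the justification fails.

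\textbf{Foliation.} Singular hyperbolicity does not imply the displayed bunching inequality $\sup_{x\in\Lambda}\|DX^{t_0}_x|_{E^s}\|\,\|DX^{t_0}_x\|^2<1$, for any metric and any $t_0$. Take a Lorenz-like singularity $\sigma\in\Lambda$ with eigenvalues $\lambda_{ss}<\lambda_s<0<\lambda_u$. Since $E^s_\sigma$ is the $\lambda_{ss}$-eigenline, $\|DX^{t_0}_\sigma|_{E^s}\|=e^{\lambda_{ss}t_0}$, and $\|DX^{t_0}_\sigma\|\ge e^{\lambda_u t_0}$ because a norm dominates the spectral radius. So the inequality at $\sigma$ forces $\lambda_{ss}+2\lambda_u<0$. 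Singular hyperbolicity only gives $\lambda_{ss}<\lambda_s$ and $\lambda_s+\lambda_u>0$. The classical Lorenz equations already violate it: $\lambda_{ss}\approx-22.83$ and $\lambda_u\approx 11.83$ give $\lambda_{ss}+2\lambda_u\approx 0.83>0$. The regularity of the stable foliation on the cross-section, and hence of $f$, is a separate fact from the singular-hyperbolic literature, which the paper quotes. It cannot be obtained through this bunching condition.

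\textbf{Inducing.} The first return map to $J$, restricted to its full branches, is not defined on a full-measure subset of $J$. The non-full branches of a first return map typically carry positive Lebesgue measure. Ergodicity of the a.c.i.m.\ only says that almost every point returns, not that its first-return branch covers $J$. The requirement $\Leb(J\setminus\bigcup_{P}P)=0$ is essential, because Step~1 of the proof of Theorem~\ref{thm:key} uses $\sum_{\omega\in\mathcal A^n}q_\omega=1$. If a definite proportion of every cylinder is discarded at each level, bounded distortion makes $\sum_{\omega\in\mathcal A^n}q_\omega$ decay exponentially. Then the lower bound $\sum_\omega q_\omega^s\ge\lambda^{n(1-s)}\sum_\omega q_\omega$ stops growing for $s$ close to $1$, and the dimension estimate collapses. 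A full-branch induced map defined Lebesgue-almost everywhere needs inducing times that are not first returns, together with distortion control near the discontinuities. This is exactly what \cite{ALP} supplies, and your Koebe/summable-distortion sketch does not replace it. The fix is to cite these results as black boxes, as the paper does.

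Two smaller remarks. The degenerate behaviour $|f'(x)|\sim|x|^{\beta-1}$ with $\beta>1$ belongs to the Rovella case; for singular-hyperbolic quotients $|f'|$ instead blows up near the discontinuities. Your observation that Theorems~\ref{thm:intermediate-metric-dimensions} and~\ref{thm:intermediate-dimensions} also need transitivity is correct, and the paper's statement glosses over it.
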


The previous result applies directly to the geometric Lorenz attractor model, thereby extending the results in \cite{MPR} to a more general setting, while strengthening them to obtain robustness and intermediate dimension properties. We also note that  
Morales, Pac\'ifico and Pujals \cite{MPP04} proved that singular hyperbolic-attractors are exactly the class of robustly transitive attractors of three-dimensional flows.
Hence,  as a direct consequence we obtain the following result:

\begin{corollary}\label{cor:robust-transitive}
If $\Lambda$ is a $C^1$-robustly transitive attractor for a $C^{2}$ three-dimensional vector field $X$, then the following holds:
\begin{enumerate}
    \item[(1)] $(X_t)_t$ has horseshoes of Hausdorff dimension
    strictly larger than $2$;
    \item[(2)] 
    $X$ is $C^2$ approximated by $C^2$-open sets of vector fields $Y$ for which the dynamical
    Lagrange and Markov spectra have non-empty interior, that is,
$$\operatorname{int}\big(\Lspec(({\widetilde Y}_t)_t,\Lambda_{\widetilde Y},\phi)\big)\neq \emptyset \quad \textrm{ and }\quad  \operatorname{int}\big(\Mspec((\widetilde Y_t)_t,\Lambda_{\widetilde Y},\phi)\big)\neq \emptyset$$
for a $C^1$-open and dense set of
    observables $\phi$;
    \item[(3)] 
    for every $c\in [1,\gamma_\Lambda)$ there is an ergodic measure $\mu_c\in \Mer{X}{\Lambda}$ such that $\dim_H(\mu_c)=c$;
    \item[(4)] for every $c\in [1,\gamma_\Lambda^s)$ and every $c'\in [1,\gamma_\Lambda^u)$ there are strictly ergodic hyperbolic sets $K_c,K_{c'}\subset \Lambda$ such that
     $\dims(K_{c})=c $ and $\dimu(K_{c'})=c'.$ 
\end{enumerate}
\end{corollary}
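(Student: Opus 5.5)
The plan is to deduce Corollary~\ref{cor:robust-transitive} directly from Theorem~\ref{thm:app-sing-hyp}, with the Morales--Pac\'ifico--Pujals characterization \cite{MPP04} as the bridge. First, since $\Lambda$ is a $C^1$-robustly transitive attractor of a three-dimensional flow, \cite{MPP04} tells us that $\Lambda$ is singular-hyperbolic. Concretely, it is partially hyperbolic with splitting $T_\Lambda M=E^s\oplus E^{cu}$, $E^{cu}$ is volume expanding, and all singularities in $\Lambda$ are hyperbolic and Lorenz-like. Because $\Lambda$ is an attractor, it is in particular a transitive attracting set. So $\Lambda$ is a singular-hyperbolic attracting set of the $C^2$ field $X$, and Theorem~\ref{thm:app-sing-hyp} applies to $(X,\Lambda)$. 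Note that $C^2\subset C^{1+\alpha}$, so the regularity requirements of all four main theorems are met.

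Second, I read off each item from the corresponding main theorem.
\begin{enumerate}
\item[(1)] This follows from Theorem~\ref{thm:dimension-flows}.
\item[(2)] This follows from Theorem~\ref{cor:Markov-spectrum}, which uses $X\in\mathfrak X^2(M)$ and the hyperbolicity of the singularities. For every $\vep>0$ it gives a $C^2$-open set $\mathcal U$, $\vep$-close to $X$, on which the Lagrange and Markov spectra of $\Lambda_{\widetilde Y}$ have non-empty interior for an open and dense set of $C^1$ observables. This is exactly ``$X$ is $C^2$-approximated by $C^2$-open sets'' with the stated property.
\item[(3)] This follows from Theorem~\ref{thm:intermediate-metric-dimensions}. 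Here transitivity is needed, since that theorem is stated for attractors.
\item[(4)] This follows from Theorem~\ref{thm:intermediate-dimensions}. Here $\gamma_\Lambda,\gamma^s_\Lambda,\gamma^u_\Lambda$ are just the quantities $\gamma,\gamma^s,\gamma^u$ computed for $\Lambda$.
\end{enumerate}

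The only point needing care, and where I expect the main obstacle, is checking that the modelling hypotheses invoked inside Theorem~\ref{thm:app-sing-hyp} hold in the robustly transitive setting. These hypotheses are: a global cross-section, a $C^1$ stable foliation on the section so that the quotient map is $C^{1+\alpha}$, and a full-branch expanding Gibbs--Markov induced map via \cite{APPV09,ALP}. Theorem~\ref{thm:app-sing-hyp} already packages these for arbitrary singular-hyperbolic attracting sets, so I would simply cite it. I would also remark that the stable foliation regularity uses the $C^2$ assumption on $X$ together with the domination of $E^s$, which is why the corollary is stated for $C^2$ fields. The remaining work is bookkeeping: confirm that the continuation $\Lambda_{\widetilde Y}$ in item (2) is the maximal invariant set in the trapping region $U$, as in \eqref{eq:continuation}. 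Robust transitivity is not needed for the perturbed fields, consistent with Remark~\ref{rem:robust}.
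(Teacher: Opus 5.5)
Your proposal is correct and is essentially the paper's argument: \cite{MPP04} shows that the $C^1$-robustly transitive attractor is a singular-hyperbolic attracting set, and Theorem~\ref{thm:app-sing-hyp} then places it under Theorems~\ref{thm:dimension-flows}--\ref{thm:intermediate-dimensions}, from which you read off items (1)--(4). The paper also leaves the case $\Sing(X)\cap\Lambda=\emptyset$ implicit; there robust transitivity gives uniform hyperbolicity, which is trivially singular-hyperbolic, so the same reduction still applies.
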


\subsection{Rovella attractors}

 In \cite{Ro93}, Rovella introduced a new class of a geometric attractors, nowadays known as Rovella attractors or contracting Lorenz attractors.
 The construction follows the same lines as the geometric Lorenz attractor, but with a modification in the eigenvalues of the singularity, now yielding three real eigenvalues
$ 
\lambda_{ss} < \lambda_s < 0 < \lambda_u
$  satisfying the dissipativity condition
$\lambda_s + \lambda_u < 0$
(see Figure~\ref{fig:rovella2} for a geometric representation).

More precisely, Rovella considers a one-parameter family
$(X_a)_a$ of $C^3$ vector fields, with corresponding quotient maps
$(f_a)_a$, and proves that there exists a set of parameters
$\mathbb A$ having $0$ as a full Lebesgue density point for which
the critical values of $f_a$ have exponential growth of derivatives
and satisfy a slow recurrence condition to the critical point.
For every $a\in \mathbb A$, the corresponding flow has a transitive
non-hyperbolic attractor $\Lambda_a$, which we refer to as a
Rovella attractor.
While the resulting attractor $\Lambda_a$ is partially hyperbolic with a splitting $T_{\Lambda_a} M = E^s \oplus E^{cu}$,  the Jacobian 
restricted to the center-unstable bundle at the singularity satisfies the condition
$$
\left|\det D X_t|_{E^{cu}(\sigma)}\right| < 1
$$
for $t>0$ sufficiently large, hence it contracts the area in the $E^{cu}$-direction. 
By construction, the Rovella attractor admits a global cross-section  and a smooth stable foliation, and induces a one-dimensional map $f:I \to I$
often called \emph{Rovella maps} or \emph{contracting Lorenz maps}, that we now recall. 

\begin{definition}
\label{def:rov}
A $C^{1+\alpha}$ map $f:[-1,1] \to [-1,1]$ is said to be a \emph{contracting Lorenz map} if it satisfies the following conditions:
\begin{enumerate}
    \item $f$ is strictly monotone on each side of $0$, with limits
    $f(0^-)=1$ and $f(0^+)=-1$;
    \item the maxima of the derivative on each side occur at the endpoints:
    $$
    \max_{x>0} f'(x) = f'(1) \quad\text{and}\quad \max_{x<0} f'(x) = f'(-1);
    $$
    \item the points $1$ and $-1$ are pre-periodic repellers;
    \item the singularity is non-flat and contracting, meaning that 
     $|f'(x)| \sim |x|^{\beta-1}$  as $x\to 0$,
    for some $\beta >1$ (in particular $|f'(x)|\to 0$ as $x\to 0$);
    \item $f$ has negative Schwarzian derivative.
\end{enumerate}
\end{definition}
Despite its similarity with the geometric Lorenz attractor, the analysis of the Rovella model is substantially more challenging, due to the lack of uniform expansion.  
One of the most remarkable characteristics of this attractor is that while it is fragile under $C^1$-perturbations, yet its structure persists for a large set of parameter values (cf. \cite{Ro93} for the precise statement).
In fact, in view of condition (4) at Definition~\ref{def:rov} this class of flows are often regarded as a counterpart of the celebrated quadratic family in the setting of singular flows and the techniques employed in \cite{MPR} do not apply to this setting. 
Depite the quotient map not being uniformly expanding, for
Rovella parameters the maps $f_a$ are non-uniformly expanding and
admit Gibbs--Markov inducing schemes (see \cite{AlvesSoufi}).
Consequently, they fall within the non-uniformly expanding setting
considered in this paper.

\begin{theorem}\label{thm:app-rovella}
Let $a\in\mathbb A$ be a Rovella parameter and let $\Lambda_a$ be
the corresponding contracting Lorenz attractor of the vector field
$X_a$. Then the conclusions of Theorems~A--D hold for
$(X_a,\Lambda_a)$.
\end{theorem}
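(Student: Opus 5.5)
The plan is to verify, for a Rovella parameter $a\in\mathbb A$, the hypotheses of Theorems~\ref{thm:dimension-flows}--\ref{thm:intermediate-dimensions}, following the checklist (i)--(iii) at the beginning of this section. No new dimension argument is needed: once $(X_a,\Lambda_a)$ is shown to be modeled by a full-branch expanding Gibbs--Markov map in the sense of Definition~\ref{def:modeled}, the four theorems apply verbatim. The main work is to connect the known structure of Rovella flows to that definition.

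\textbf{Geometric structure.} We will use Rovella's construction \cite{Ro93}. The vector fields $X_a$ are $C^3$, hence $C^{2}$, so the regularity required in Theorem~\ref{cor:Markov-spectrum} holds. The attractor $\Lambda_a$ is transitive and partially hyperbolic with splitting $E^s\oplus E^{cu}$. It has a global cross-section $S$ whose Poincar\'e map $\mathcal R_a$ preserves a contracting foliation. Near the singularity the eigenvalues are real, with the strong contraction $\lambda_{ss}$ dominating. This bunching condition gives a $C^{1+\alpha}$ (in fact $C^2$) stable foliation, so the quotient $f_a:I\to I$ is a $C^{1+\alpha}$ contracting Lorenz map in the sense of Definition~\ref{def:rov}. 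The singularity is hyperbolic, as Theorem~\ref{cor:Markov-spectrum} requires.

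\textbf{Gibbs--Markov induced map.} For $a\in\mathbb A$, the critical values have exponential derivative growth and slow recurrence. By \cite{AlvesSoufi}, $f_a$ is then non-uniformly expanding and admits a Gibbs--Markov induced map $F=f_a^{R}$ on an interval $J$ with a countable partition. This is the step I expect to be the main obstacle, because \cite{AlvesSoufi}-type schemes usually give Gibbs--Markov maps whose images have length bounded below, not full-branch maps. I will first try to invoke the full-branch version of the Alves--Luzzatto--Pinheiro construction \cite{ALP}, which produces full-branch inducing on a fixed interval $J$ for non-uniformly expanding maps with non-degenerate critical/singular sets; the non-flatness condition (4) of Definition~\ref{def:rov} and the slow recurrence provide exactly its hypotheses.

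If that fails, the fallback is to make the map full-branch by a second induction, using the observation that one may induce finitely many times. Pick a small interval $J'$ that is covered by the image of some branch. Then use the mixing of the acip and the bounded-below image lengths to return to $J'$ with full branches. This can be done by first-return-to-$J'$ of the Markov map, with Koebe/negative Schwarzian distortion control. Uniform expansion, if needed, comes from passing to a power $F^k$. Bounded distortion follows from negative Schwarzian via Koebe once each branch extends diffeomorphically to a slightly larger interval.

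\textbf{Conclusion.} Theorems~\ref{thm:intermediate-metric-dimensions} and \ref{thm:intermediate-dimensions} need an attractor and a suspension model over $\mathcal R_a$. Transitivity of $\Lambda_a$ is part of Rovella's result. The suspension model is immediate from the global cross-section, as explained in Subsection~\ref{ss:sections}. With these in place, all four theorems apply to $(X_a,\Lambda_a)$.
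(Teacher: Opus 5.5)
Your proposal is correct and follows the paper's route: Rovella's construction supplies the cross-section, the smooth stable foliation, the hyperbolic singularity, the $C^3$ regularity and transitivity, \cite{AlvesSoufi} supplies the Gibbs--Markov inducing scheme for $f_a$, and Theorems~A--D are then applied directly. Your full-branch concern is settled by your first option. Once \cite{AlvesSoufi} gives non-uniform expansion and slow recurrence, the construction of \cite{ALP} (which the paper cites for the same purpose in the singular-hyperbolic and Benedicks--Carleson cases) already maps each partition element diffeomorphically onto a fixed interval, with uniform expansion and bounded distortion. So the fallback is unnecessary, and as stated it would anyway need branch images to be unions of partition elements for first returns to be full-branch.
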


We emphasize that although the Rovella attractor itself is not robust this causes no obstruction to Theorem~\ref{cor:Markov-spectrum}, since the Gibbs--Markov structure is
used only to produce the initial horseshoe and the robust conclusion
then follows from the hyperbolic continuation of such horseshoe (recall the proof of Theorem~\ref{cor:Markov-spectrum}).
\color{black}

\subsection{General skew product suspensions}

The geometric Lorenz and Rovella attractors provide a natural examples of a broader class of non-uniformly hyperbolic flows that can be described through skew-product Poincar\'e maps, that we now describe. 

We say
that an attracting set $\Lambda$ of a three-dimensional vector field
$X$ is a \emph{skew-product suspension} if it admits a global two-dimensional 
cross-section $S$ for which the Poincar\'e return map can be written,
in suitable coordinates, as a skew-product 
\begin{equation}
\label{eqskew}
R(x,y)=\bigl(f(x),g(x,y)\bigr),    
\end{equation}
where the fiber maps $g(x,\cdot)$ are uniformly contracting and the
quotient map $f$ is a $C^{1+\alpha}$ non-uniformly expanding one-dimensional map.
The dynamics of the flow on $\Lambda$ is then obtained by suspending
the Poincar\'e map $R$ with a corresponding return-time function
$ 
\tau:S\longrightarrow(0,\infty)
$ 
and, in local coordinates is given by
$$
X_t((x,y),s)=\Big(R^n(x,y), t+s-\sum_{j=0}^n\tau(R^j(x,y))\Big)
$$
where $n=n(x,y) \ge 0$ is uniquely defined by
$\sum_{j=0}^n\tau(R^j(x,y))\le t+s< \sum_{j=0}^{n+1}\tau(R^j(x,y))$.
Since the fiber maps are uniformly contracting, the attracting set $\Lambda$ is partially hyperbolic.
Furthermore, if the quotient map $f$ admits a full-branch uniformly expanding
Gibbs--Markov induced map, the main  results of the paper apply to the corresponding attracting set.

\begin{theorem}\label{thm:skew-product-suspension}
Let $\Lambda$ be a skew-product suspension of a
$C^{1+\alpha}$ three-dimensional vector field $X$ with a Poincar\'e map $R$ of the form \eqref{eqskew}. Assume that the
one-dimensional quotient map
$ 
f:I\to I
$ 
admits a full-branch uniformly expanding Gibbs--Markov induced map. Then the conclusions of
Theorems~\ref{thm:dimension-flows}-\ref{thm:intermediate-metric-dimensions} hold for $(X,\Lambda)$.
If, in addition, $X$ is of class $C^2$, then the conclusion of
Theorem~\ref{cor:Markov-spectrum} also holds.
\end{theorem}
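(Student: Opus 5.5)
The plan is to check that a skew-product suspension satisfies each hypothesis used in the proofs of Theorems~\ref{thm:dimension-flows}--\ref{thm:intermediate-metric-dimensions}, and, under the additional $C^2$ assumption, those of Theorem~\ref{cor:Markov-spectrum}. The theorems themselves are not reproved.

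\textbf{Step 1: the modeling structure.} The structure required by Definition~\ref{def:modeled} is essentially given by the form \eqref{eqskew}.

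The fibers $\{x\}\times I_y$ of $S$ are invariant under $R$, since $R(\{x\}\times I_y)\subset\{f(x)\}\times I_y$, and $g(x,\cdot)$ contracts them uniformly. Hence these fibers are the stable leaves of the Poincar\'e map. Their flow saturation gives the $X$-invariant foliation $\mathcal F^s$ of Section~\ref{sec:prelim}.

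This foliation is $C^1$ in the product coordinates, since it is a smooth vertical foliation. This replaces the bunching argument.

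Partial hyperbolicity follows by a cone-field argument. Take the cone around the fiber direction in $S$, and its $DX_t$-invariant complement containing $\langle X\rangle$ and the horizontal direction. Uniform contraction of $\partial_y g$, together with $\tau$ bounded below, gives $E^s$ uniformly contracted and dominated by $E^{cu}$. Domination has to be checked against $|f'|$, which may be small near critical points. I would state this as part of the standing assumption ``uniformly contracting fibers'', interpreted as domination, exactly as in the Rovella case.

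The quotient projection $\pi^s$ is then simply $(x,y)\mapsto x$. The induced interval map is $f$, which by hypothesis admits a full-branch uniformly expanding Gibbs--Markov induced map. So $\Lambda$ is modeled as in Definition~\ref{def:modeled}, and Theorem~\ref{thm:dimension-flows} applies through Theorem~\ref{thm:key} and Lemma~\ref{le:dimPoincare}.

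\textbf{Step 2: the lift in Lemma~\ref{le:dimPoincare}.} The key point is that the set $\widehat K_\vep$ defined there is a genuine hyperbolic horseshoe of $R$ lying away from the discontinuities of $R$.

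The set $\Gamma_N$ uses only finitely many inducing branches, with bounded return time $R_N$. So the intermediate iterates avoid a neighbourhood of the discontinuity and critical set of $f$. Therefore $\tau$ is bounded on $(\pi^s)^{-1}(\Gamma_N)$, and $R$ is a $C^{1+\alpha}$ diffeomorphism near it.

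Hyperbolicity of the lift follows from two facts: the expansion $|(f^k)'|\ge C_N\lambda_N^k$ on $\Gamma_N$ from Theorem~\ref{thm:key}, and the uniform fiber contraction. The maximal invariant set over $\pi^{-1}(\Gamma_N)$ is $\bigcap_n R^n(\pi^{-1}\Gamma_N)$. It is a graph-like Cantor set over $\Gamma_N$ that is transitive and locally maximal, since it is coded by a full shift on finitely many words.

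\textbf{Step 3: Theorems~\ref{thm:intermediate-metric-dimensions} and~\ref{thm:intermediate-dimensions}, and Theorem~\ref{cor:Markov-spectrum}.} Theorems~\ref{thm:intermediate-metric-dimensions} and~\ref{thm:intermediate-dimensions} only use the following:
\begin{enumerate}
\item horseshoes in $\Lambda$ exist (Step~2);
\item their Poincar\'e sections are conjugate to subshifts of finite type;
\item the suspension correspondence $\Xi$ holds, which is explicit here by the formula for $X_t$.
\end{enumerate}
So nothing further is needed, because the model is literally a suspension over a map whose quotient is $f$.

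For Theorem~\ref{cor:Markov-spectrum} with $X$ of class $C^2$, there are two things to check. First, $R$ must be $C^2$ near the horseshoe; this holds since the flow is $C^2$ and $\tau$ is bounded there. Second, the singularities in $\Lambda$ (if any) must be hyperbolic. After that, Proposition~\ref{prop:realization} and Lemma~\ref{lem:induced-observable} apply verbatim.

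\textbf{Main obstacle.} I expect the hardest point to be Step~1: extracting genuine partial hyperbolicity and the continuity properties of $\tau$ and $R$ from the bare skew-product form. My first attempt would use the cone-field argument with domination built into the definition. The dimension bounds themselves are inherited from the earlier theorems.
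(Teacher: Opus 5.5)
Your proposal is correct and follows the paper's route. The paper's argument is just the preceding paragraph: the uniformly contracted vertical fibers give the stable foliation with quotient map $f$, so $\Lambda$ is modeled by a full-branch expanding Gibbs--Markov map and the main theorems apply, with $C^2$ regularity needed for Theorem~\ref{cor:Markov-spectrum}.

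Your caution about domination near critical points is well placed, since the paper asserts partial hyperbolicity directly from fiber contraction, but you need not add it as a hypothesis. The arguments only use hyperbolicity of the lifted horseshoe, where the base expansion $|(f^k)'|\ge C_N\lambda_N^k$ automatically dominates the fiber contraction. Also, that lift is not graph-like: its fibers over $\Gamma_N$ are Cantor sets, and this is exactly the source of the positive stable dimension used in Lemma~\ref{le:dimPoincare}.
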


Theorem~\ref{thm:skew-product-suspension} allows to consider very broad classes of flows to which our results apply, and  we now briefly recall two of such classes.

\subsubsection{Intermittent Maps}

A standard class of non-uniformly expanding interval maps is given by the
Pomeau--Manneville family
$$
f_b(x)=
\begin{cases}
x\left(1+2^b x^b\right), & x\in\left[0,\frac12\right),\\[1mm]
2x-1, & x\in\left[\frac12,1\right],
\end{cases}
\qquad b>0.
$$
These maps have a neutral fixed point at the origin, meaning that
$$
f_b(0)=0,
\quad\text{and}\quad
f_b'(0)=1,
$$
and are uniformly expanding elsewhere. For $0<b<1$, they admit a
finite absolutely continuous invariant probability measure and a
full-branch uniformly expanding Gibbs--Markov induced map \cite{Young99}.

A second class of non-uniformly expanding interval maps was considered
by Coates, Luzzatto and Muhammad \cite{CLM23}. These are full-branch
maps with two indifferent fixed points, allowing also critical points
and/or singularities with unbounded derivative. More precisely, the
class includes maps
$ 
g:I\to I
$ 
with two monotone full branches and with non-uniform behavior both near
the endpoints and near the common boundary of the branches.
The main feature relevant here is that every map in the class considered
in \cite{CLM23} admits, through a double inducing procedure, a first
return map $G$ 
which is full-branch, uniformly expanding and Gibbs--Markov, with
bounded distortion. Therefore, skew-product suspensions whose quotient
map belongs to this class also satisfy the hypotheses of
Theorem~\ref{thm:skew-product-suspension}.

In particular, Theorems~\ref{thm:dimension-flows}--\ref{thm:intermediate-dimensions}
apply to intermittent flows, that is, attracting sets $\Lambda$ of
$C^{1+\alpha}$ vector fields whose associated skew-product suspension
has a quotient map belonging to one of the intermittent classes
described above.

\subsubsection{Benedicks--Carleson maps}

A further classical class of non-uniformly expanding interval maps is
given by the Benedicks--Carleson quadratic family
$$
f_a(x)=1-ax^2,
\qquad x\in[-1,1],\quad a\in(0,2].
$$
There exists a positive Lebesgue measure set of parameters
${\mathbb A}\subset[a^*,2],$ 
with $a^*<2$ sufficiently close to $2$, such that $2$ is a full
one-sided density point of ${\mathbb A}$ and, for every $a\in{\mathbb A}$, the
critical orbit satisfies suitable expansion and recurrence conditions.
In particular, these maps are non-uniformly expanding despite the
presence of the non-flat critical point at the origin.
For every Benedicks--Carleson parameter $a\in{\mathbb A}$, the map $f_a$
admits a full-branch uniformly expanding Gibbs--Markov induced map
with bounded distortion, as a consequence of \cite{ALP}.
Consequently,
Theorems~\ref{thm:dimension-flows}--\ref{thm:intermediate-dimensions}
apply to Benedicks--Carleson flows, that is, attracting sets $\Lambda$
of $C^{1+\alpha}$ vector fields whose associated skew-product
suspension has a quotient map given by a Benedicks--Carleson map.
\newpage
\section*{Summary of notation}

To help the reader to to follow arguments,  in this section we present a table with the basic notation we use in our arguments. Because our arguments pass to three distinct levels of systems, we believe this table can help the reader to follow more easily the arguments.  
\medskip

\begin{tabularx}{\linewidth}{@{}>{$}p{3.5cm}<{$} X@{}}
\toprule
\multicolumn{1}{@{}l}{\textbf{Symbol}} & \textbf{Meaning}\\
\midrule
\Lambda & attracting set of $X$; used for no other object\\
K,\ \widehat K & a horseshoe of the flow; its trace $K\cap S$ on the cross-section\\
S,\ \mathcal R,\ \tau & global cross-section, Poincar\'e return map, first return time\\
\mathcal F^{s},\ \pi^{s} & stable foliation on $S$ and the quotient projection along it\\
I,\ f & leaf space and quotient map, $f\circ\pi^{s}=\pi^{s}\circ\mathcal R$\\
J,\ R,\ F=f^{R} & domain of the induced map, inducing time, induced map\\
\mathcal P=\{P_i\}_{i\in\mathcal A},\ J_\omega & Markov partition of $J$, countable alphabet, cylinders\\
\Omega,\ \mathcal T,\ \pi & symbolic space, shift map, coding $\pi:\Omega\to\widehat K$\\
\psi^{u}_{X},\ \psi^{u}_{\mathcal R},\ \widetilde\psi^{\,u} & unstable geometric potential: for the flow, for $\mathcal R$, on $\Omega$\\
P_{Z}(\cdot,\cdot),\ \mathcal Z_k(t) & topological pressure on a set $Z$; partition function\\
\delta^{s}(\cdot),\ \delta^{u}(\cdot) & transverse stable and unstable dimensions\\
\gamma^{s},\gamma^{u},\gamma & suprema of $\dims,\dimu,\dim_H$ over ergodic measures on horseshoes\\
\widehat\gamma^{\,s},\widehat\gamma^{\,u},\widehat\gamma & the corresponding suprema on the cross-section\\
\Xi & the suspension operator on measures, $\mathcal M_{\mathcal R}(\widehat K)\to\mathcal M_{X}(K)$\\
\mathcal M_{X},\ \mathcal M_{\mathcal R},\ \mathcal M_{\mathcal T} & invariant probabilities for the flow, for $\mathcal R$, for $\mathcal T$; superscript $e$: ergodic\\
\mathfrak H(\Lambda) & the family of horseshoes contained in $\Lambda$\\
\mathfrak L(\cdot),\ \mathfrak M(\cdot) & Lagrange and Markov dynamical spectra\\
\widetilde Y,\ Y & the perturbed vector field; a vector field near it\\
\mathcal U,\ \mathcal W,\ V & neighbourhoods of vector fields, of diffeomorphisms, of $\widehat K$ in $S$\\
\varphi:N\to N & abstract $C^{2}$ surface diffeomorphism and its ambient surface\\
\lambda^{s},\lambda^{u}\ \ \text{vs}\ \ \lambda_{ss},\lambda_s,\lambda_u & Lyapunov exponents versus eigenvalues at a singularity\\
\bottomrule
\end{tabularx}

\color{black}

\vspace{1cm}

\subsection*{Acknowledgements}
PV was partially supported by CIDMA under the Portuguese Foundation for Science and Technology  (FCT, https://ror.org/00snfqn58)  Multi-Annual Financing Program for R\&D Units
 https://doi.org/10.54499/UID/04106/2025, and by Agence Nationale de la Recherche (ANR) under the ANR 2020 funding programme, project THERMOGAMAS.


\begin{thebibliography}{99}

\bibitem{ABS} V.~S. Afraimovich, V.~V. Bykov and L.~P. Shilnikov, \emph{The origin and structure of the Lorenz attractor}, Dokl. Akad. Nauk SSSR \textbf{234} (1977), 336--339.

\bibitem{AP87} V.~S. Afraimovich and Ya.~B. Pesin, \emph{Dimension of Lorenz type attractors}, in: \emph{Mathematical Physics Reviews, Vol.~6}, Soviet Sci. Rev. Sect. C Math. Phys. Rev. \textbf{6}, Harwood Academic Publ., Chur, 1987, pp.~169--241. 

\bibitem{Alvesbook} J.~F. Alves, \emph{Nonuniformly Hyperbolic Attractors: Geometric and Probabilistic Aspects}, Springer Monogr. Math., Springer, 2020.

\bibitem{AAPP} J.~F. Alves, V. Ara\'ujo, M.~J. Pac\'ifico and V. Pinheiro, \emph{On the volume of singular-hyperbolic sets}, Dyn. Syst. \textbf{22} (2007), 249--267.



\bibitem{ALP} J. F. Alves, S. Luzzatto and V. Pinheiro, Markov structures and decay of correlations for non-uniformly expanding dynamical systems, Ann. Inst. H. Poincaré Anal. Non Linéaire 22 (2005), 817–839.






\bibitem{AlvesSoufi}
J.~F. Alves and M. Soufi,
\emph{Statistical stability and limit laws for Rovella maps},
Nonlinearity \textbf{25} (2012), no.~12, 3527--3552.

\bibitem{AraButVar}
V. Ara{\'u}jo, O. Butterley and P. Varandas,
\textit{Open sets of Axiom A flows with exponentially mixing attractors},
Proc. Amer. Math. Soc. \textbf{144} (2016), no.~7, 2971--2984.





\bibitem{AM17} V. Ara\'ujo and I. Melbourne, \emph{Existence and smoothness of the stable foliation for sectional hyperbolic attractors}, Bull. Lond. Math. Soc. \textbf{49} (2017), 351--367.

\bibitem{AM19} V. Ara\'ujo and I. Melbourne, \emph{Mixing properties and statistical limit theorems for singular hyperbolic flows without a smooth stable foliation}, Adv. Math. \textbf{349} (2019), 212--245.


\bibitem{AP10} V. Ara\'ujo and M.~J. Pac\'ifico, \emph{Three-Dimensional Flows}, Ergeb. Math. Grenzgeb. (3) \textbf{53}, Springer, 2010.


\bibitem{APPV09}
V. Ara\'ujo, M.~J. Pac\'ifico, E. Pujals and M. Viana,
\emph{Singular-hyperbolic attractors are chaotic},
Trans. Amer. Math. Soc. \textbf{361} (2009), no.~5, 2431--2485.


\bibitem{AOR25} A. Arbieto, P. Oprocha and E. Rego, \emph{Entropy flexibility of dynamical systems}, preprint, arXiv:2507.11048.

\bibitem{BaBook} L. Barreira, \emph{Dimension Theory of Hyperbolic Flows}, Springer Monogr. Math., Springer, 2013.


\bibitem{BH} L. Barreira and C. Holanda, \emph{Dimension spectra for flows: future and past}, Nonlinear Anal. Real World Appl. \textbf{65} (2022), Paper No. 103497.

\bibitem{BPS99} L. Barreira, Ya. Pesin and J. Schmeling, \emph{Dimension and product structure of hyperbolic measures}, Ann. of Math. (2) \textbf{149} (1999), 755--783.

\bibitem{BRW04} L. Barreira, L. Radu and C. Wolf, \emph{Dimension of measures for suspension flows}, Dyn. Syst. \textbf{19} (2004), 89--107.

\bibitem{BCR07} F. B\'eguin, S. Crovisier and F. Le~Roux, \emph{Construction of curious minimal uniquely ergodic homeomorphisms on manifolds: the Denjoy--Rees technique}, Ann. Sci. \'Ecole Norm. Sup. (4) \textbf{40} (2007), 251--308.

\bibitem{Bo79} R. Bowen, \emph{Hausdorff dimension of quasi-circles}, Inst. Hautes \'Etudes Sci. Publ. Math. \textbf{50} (1979), 259--273.

\bibitem{BR75} R. Bowen and D. Ruelle, \emph{The ergodic theory of Axiom A flows}, Invent. Math. \textbf{29} (1975), 181--202.

\bibitem{CMM18} A. Cerqueira, C. Matheus and C.~G. Moreira, \emph{Continuity of Hausdorff dimension across generic dynamical Lagrange and Markov spectra}, J. Mod. Dyn. \textbf{12} (2018), 151--174.

\bibitem{CLM23}
D.~Coates, S.~Luzzatto, and M.~Muhammad.
\textit{Doubly intermittent full branch maps with critical points and singularities}.
Comm. Math. Phys. \textbf{402} (2023), no.~2, 1845--1878.

\bibitem{CF89} T.~W. Cusick and M.~E. Flahive, \emph{The Markoff and Lagrange Spectra}, Math. Surveys Monogr. \textbf{30}, Amer. Math. Soc., 1989.

\bibitem{DGRZ25} L.~J. D\'iaz, K. Gelfert, M. Rams and J. Zhang, \emph{Full flexibility of entropies among ergodic measures for partially hyperbolic diffeomorphisms}, preprint, arXiv:2505.03079.

\bibitem{DHT22} Y. Dong, X. Hou and X. Tian, \emph{Abundance of Smale's horseshoes and ergodic measures via multifractal analysis and various quantitative spectrums}, preprint, arXiv:2207.02403.

\bibitem{ER85} J.-P. Eckmann and D. Ruelle, \emph{Ergodic theory of chaos and strange attractors}, Rev. Modern Phys. \textbf{57} (1985), 617--656.

\bibitem{Falconer} K.~J. Falconer, \emph{Fractal Geometry: Mathematical Foundations and Applications}, 2nd ed., John Wiley \& Sons, 2003.


\bibitem{G72} C. Grillenberger, \emph{Constructions of strictly ergodic systems.~I. Given entropy}, Z. Wahrscheinlichkeitstheorie und Verw. Gebiete \textbf{25} (1972/73), 323--334. 

\bibitem{G} J. Guckenheimer, \emph{A strange, strange attractor}, in: \emph{The Hopf Bifurcation and Its Applications}, Appl. Math. Sci. \textbf{19}, Springer, 1976, pp.~368--381. 

\bibitem{HPS} M.~W. Hirsch, C.~C. Pugh and M. Shub, \emph{Invariant manifolds}, Bull. Amer. Math. Soc. \textbf{76} (1970), 1015--1019.

\bibitem{HXZ10} W. Huang, X. Ye and G. Zhang, \emph{Lowering topological entropy over subsets}, Ergodic Theory Dynam. Systems \textbf{30} (2010), 181--209.

\bibitem{Ka80} A. Katok, \emph{Lyapunov exponents, entropy and periodic orbits for diffeomorphisms}, Inst. Hautes \'Etudes Sci. Publ. Math. \textbf{51} (1980), 137--173.

\bibitem{LY85} F. Ledrappier and L.-S. Young, \emph{The metric entropy of diffeomorphisms.~II. Relations between entropy, exponents and dimension}, Ann. of Math. (2) \textbf{122} (1985), 540--574.

\bibitem{LSWW20} M. Li, Y. Shi, S. Wang and X. Wang, \emph{Measures of intermediate entropies for star vector fields}, Israel J. Math. \textbf{240} (2020), 791--819.

\bibitem{LMMR20}
D.~Lima, C.~Matheus, C.~G. Moreira, e S.~Roma\~na.
 \emph{ Classical and Dynamical Markov and Lagrange Spectra: Dynamical, Fractal and Arithmetic Aspects}. World Scientific Publishing Company, 2020.

\bibitem{LM20} D. Lima and C.~G. Moreira, \emph{Phase transitions on the Markov and Lagrange dynamical spectra}, Ann. Inst. H. Poincar\'e C Anal. Non Lin\'eaire \textbf{37} (2020), 1--31.




\bibitem{MM83} H. McCluskey and A. Manning, \emph{Hausdorff dimension for horseshoes}, Ergodic Theory Dynam. Systems \textbf{3} (1983), 251--260.

\bibitem{MPP04} C.~A. Morales, M.~J. Pac\'ifico and E.~R. Pujals, \emph{Robust transitive singular sets for $3$-flows are partially hyperbolic attractors or repellers}, Ann. of Math. (2) \textbf{160} (2004), 375--432.


\bibitem{MPR} C.~G. Moreira, M.~J. Pac\'ifico and S. Roma\~na, \emph{Hausdorff dimension, Lagrange and Markov dynamical spectra for geometric Lorenz attractors}, Bull. Amer. Math. Soc. (N.S.) \textbf{57} (2020), 269--292.

\bibitem{MPV01} C.~G. Moreira, J. Palis and M. Viana, \emph{Homoclinic tangencies and fractal invariants in arbitrary dimension}, C.~R. Acad. Sci. Paris S\'er. I Math. \textbf{333} (2001), 475--480.

\bibitem{MR15} C.~G. Moreira and S. Roma\~na, \emph{On the Lagrange and Markov dynamical spectra for Anosov flows in dimension~$3$}, preprint. 

\bibitem{MR17} C.~G. Moreira and S. Roma\~na, \emph{On the Lagrange and Markov dynamical spectra}, Ergodic Theory Dynam. Systems \textbf{37} (2017), 1570--1591.

\bibitem{MR23} C.~G. Moreira and S. Roma\~na, \emph{On the Lagrange and Markov dynamical spectra for geodesic flows on surfaces with negative curvature}, J. Mod. Dyn. \textbf{19} (2023), 187--236.

\bibitem{MY01} C.~G. Moreira and J.-C. Yoccoz, \emph{Stable intersections of regular Cantor sets with large Hausdorff dimensions}, Ann. of Math. (2) \textbf{154} (2001), 45--96.

\bibitem{O68} V.~I. Oseledets, \emph{A multiplicative ergodic theorem: Lyapunov characteristic numbers for dynamical systems}, Trans. Moscow Math. Soc. \textbf{19} (1968), 197--231.

\bibitem{PT93} J. Palis and F. Takens, \emph{Hyperbolicity and Sensitive Chaotic Dynamics at Homoclinic Bifurcations: Fractal Dimensions and Infinitely Many Attractors}, Cambridge Stud. Adv. Math. \textbf{35}, Cambridge Univ. Press, 1993.

\bibitem{PV} J. Palis and M. Viana, \emph{On the continuity of Hausdorff dimension and limit capacity for horseshoes}, in: \emph{Dynamical Systems (Valpara\'iso, 1986)}, Lecture Notes in Math. \textbf{1331}, Springer, 1988, pp.~150--160.

\bibitem{PY94} J. Palis and J.-C. Yoccoz, \emph{Homoclinic tangencies for hyperbolic sets of large Hausdorff dimension}, Acta Math. \textbf{172} (1994), 91--136.


\bibitem{PP90} W. Parry and M. Pollicott, \emph{Zeta functions and the periodic orbit structure of hyperbolic dynamics,} Ast\'erisque No. 187-188 (1990), 268 pp.


\bibitem{Pe97} Ya. Pesin, \emph{Dimension Theory in Dynamical Systems: Contemporary Views and Applications}, Chicago Lectures in Math., Univ. Chicago Press, 1997.

\bibitem{PS} Ya. Pesin and V. Sadovskaya, \emph{Multifractal analysis of conformal Axiom A flows}, Comm. Math. Phys. \textbf{216} (2001), 277--312.

\bibitem{Ro93} \'A. Rovella, \emph{The dynamics of perturbations of the contracting Lorenz attractor}, Bol. Soc. Brasil. Mat. (N.S.) \textbf{24} (1993), 233--259.


\bibitem{Ru82} D. Ruelle, \emph{Repellers for real analytic maps}, Ergodic Theory Dynam. Systems \textbf{2} (1982), 99--107.


\bibitem{Ste00} T. Steinberger, \emph{Hausdorff dimension of attractors for two-dimensional Lorenz transformations}, Israel J. Math. \textbf{116} (2000), 253--269.


\bibitem{S25} P. Sun, \emph{Ergodic measures of intermediate entropies for dynamical systems with the approximate product property}, Adv. Math. \textbf{465} (2025), Paper No. 110159.

\bibitem{TWW19} X. Tian, S. Wang and X. Wang, \emph{Intermediate Lyapunov exponents for systems with periodic orbit gluing property}, Discrete Contin. Dyn. Syst. \textbf{39} (2019), 1019--1032.

\bibitem{Vi04} D. Viswanath, \emph{The fractal property of the Lorenz attractor}, Phys. D \textbf{190} (2004), 115--128.

\bibitem{Yo82} L.-S. Young, \emph{Dimension, entropy and Lyapunov exponents}, Ergodic Theory Dynam. Systems \textbf{2} (1982), 109--124.

\bibitem{You98} L.-S. Young, \emph{Statistical properties of dynamical systems with some hyperbolicity}, Ann. of Math. (2) \textbf{147} (1998), 585--650.


\bibitem{Young99}
L.-S. Young,
\emph{Recurrence times and rates of mixing},
Israel J. Math. \textbf{110} (1999), 153--188.












\end{thebibliography}
\end{document}